\def\phi{\varphi}
\def\rho{\varrho}
\def\epsilon{\varepsilon}
\numberwithin{equation}{section}
\theoremstyle{plain}
\newtheorem{thm}[equation]{Theorem}
\newtheorem{lem}[equation]{Lemma}
\newtheorem{cor}[equation]{Corollary}
\theoremstyle{defn}
\newtheorem{defn}[equation]{Definition}
\theoremstyle{remark}
\newtheorem{rem}[equation]{Remark}
\providecommand{\loc}{{\ensuremath{\mathrm{loc}}}}
\renewcommand{\leq}{\leqslant}
\renewcommand{\geq}{\geqslant}
\begin{document}
\author[D. Drihem]{Douadi Drihem \ }
\date{\today }
\title[Nemytzkij operators and  semi linear parabolic equations]{%
Composition operators on Herz-type Triebel-Lizorkin spaces with
application to semilinear parabolic equations}
\maketitle

\begin{abstract}
Let $G:\mathbb{R\rightarrow R}$ be a continuous function. In the first part
of this paper, we investigate sufficient conditions on $G$ such that 
\begin{equation*}
\{G(f):f\in \dot{K}_{p,q}^{\alpha }F_{\beta }^{s}\}\subset \dot{K}%
_{p,q}^{\alpha }F_{\beta }^{s}
\end{equation*}%
holds. Here $\dot{K}_{p,q}^{\alpha }F_{\beta }^{s}$ are Herz-type
Triebel-Lizorkin spaces. These spaces unify and generalize many classical
function spaces such as Lebesgue spaces of power weights, Sobolev and
Triebel-Lizorkin spaces of power weights. In the second part of this paper
we will study local and global Cauchy problems for the semilinear parabolic
equations%
\begin{equation*}
\partial _{t}u-\Delta u=G(u)
\end{equation*}%
with initial data in Herz-type Triebel-Lizorkin spaces. Our results cover
the results obtained with initial data in some know function spaces such us
\ fractional Sobolev spaces. Some limit cases are given.

\noindent MSC classification (2010): 46E35, 47H30, 35K45, 35K55.\newline
Key words and phrases: Besov spaces, Triebel-Lizorkin spaces, Herz spaces,
Nemytzkij operators, Semilinear parabolic equations.
\end{abstract}

\section{Introduction}

Let $G:\mathbb{R\rightarrow R}$ be a function. In this paper we consider the
Cauchy problem for semilinear parabolic equations on $\mathbb{R}^{n}$ of the
following form:%
\begin{equation}
\frac{\partial u}{\partial t}(t,x)=\Delta u(t,x)+G(u(t,x)),\quad (t,x)\in 
\mathbb{(}0,\mathbb{\infty )}\times \mathbb{R}^{n}  \label{equation}
\end{equation}%
subject to the initial value condition%
\begin{equation*}
u(0,x)=u_{0}(x)\quad \text{on}\quad \mathbb{R}^{n}.
\end{equation*}%
The most classical examples of such equations are the semilinear heat
equations%
\begin{equation}
\frac{\partial u}{\partial t}(t,x)=\Delta u(t,x)+u|u|^{\mu -1},\quad
(t,x)\in \mathbb{(}0,\mathbb{\infty )}\times \mathbb{R}^{n},\mu >1,
\label{equation1}
\end{equation}%
the Burgers viscous equations%
\begin{equation*}
\frac{\partial u}{\partial t}(t,x)=\Delta u(t,x)+\partial _{x}(|u|^{\mu
}),\quad (t,x)\in \mathbb{(}0,\mathbb{\infty )}\times \mathbb{R}^{n},\mu >1
\end{equation*}%
and the Navier-Stokes equation%
\begin{equation*}
\frac{\partial u}{\partial t}(t,x)=\Delta u(t,x)+\mathcal{P}\nabla (u\otimes
u),\quad (t,x)\in \mathbb{(}0,\mathbb{\infty )}\times \mathbb{R}^{n},\mu >1,
\end{equation*}%
where $\mathcal{P}$ denotes the projector on the divergence free vector
field. Let us recall briefly some results on most known function spaces. For
Lebesgue space,  Weissler in \cite{W79} and \cite{We80} studied %
\eqref{equation1} with singular data in certain Lebesgue spaces $L^{p}$. In 
\cite{W79} he proved\ the local existence of \eqref{equation1} with
initially data in $L^{p_{c}}$ with $p_{c}=\frac{n(\mu -1)}{2}>1$ and the
solution belongs to $C([0,T),L^{p})$, and that $T$ can be taken as infinity
for sufficiently\ small data in $L^{p_{c}}$. Giga \cite{Gi86} proved that
the solution belongs to $L^{q}([0,T),L^{p})$ with $\frac{1}{q}=\frac{n}{2}(%
\frac{1}{p_{c}}-\frac{1}{p}),p,q>p_{c}$ and $q>\mu .$

Weissler \cite{W79} proved the local existence of \eqref{equation1} for
initial values in $L^{p}$ with $p>p_{c}$ and $p\geq \mu $. See \cite{Gi86}
for further results.

In case of $1<p<p_{c}$ there exist some non-negative initial data in $L^{p}$
for which there is no non-negative solution for any positive time $T>0$, see
e.g. \cite{BK96} and \cite{We80}.

Further results, for the well-posedness of the Cauchy problem of %
\eqref{equation1} can be found in \cite{CW19}, \cite{TW1}, \cite{Te02} and \cite{U21}.

In the framework of fractional Sobolev spaces, \cite{Ri98} established local
well-posedness of problem \eqref{equation} with some suitable assumptions on 
$G$ and obtained existence of global small solutions in $H_{p}^{\frac{n}{p}-%
	\frac{2}{\mu }}$. Miao and Zhang, \cite{MZ04} establish the local
well-posedness and small global well-posedness in Besov spaces $B_{p,2}^{s}$%
. Also, they establish the local well-posedness and small global
well-posedness of problem \eqref{equation} in the critical space $B_{p,2}^{%
	\frac{n}{p}}$.

In \cite{Dr-Herz-Heat} the author\ study the equation \eqref{equation} with%
\begin{equation}
|G(x)-G(y)|\leq |x-y|(|x|^{\mu -1}+|y|^{\mu -1}),\quad x,y\in \mathbb{R},\mu
>1,G(0)=0  \label{Ass-G}
\end{equation}%
and initial data in Herz spaces $\dot{K}_{p,q}^{\alpha }$. Herz spaces play
an important role in Harmonic Analysis. After they have been introduced in 
\cite{Herz68}, the theory of these spaces had a remarkable development in
part due to its usefulness in applications. For instance, they appear in the
characterization of multipliers on Hardy spaces \cite{BS85}, in the
summability of Fourier transforms \cite{FeichtingerWeisz08} and in
regularity theory for elliptic equations in divergence form \cite{Rag09}.
They unify and generalize the classical Lebesgue spaces of power weights.
More precisely, if $\alpha =0$ and $p=q$, then $\dot{K}_{p,p}^{0}$ coincides
with the Lebesgue spaces $L^{p}$ and 
\begin{equation*}
\dot{K}_{p,p}^{\alpha }=L^{p}(\mathbb{R}^{n},|\cdot |^{\alpha p}),\quad 
\text{(Lebesgue space equipped with power weight).}
\end{equation*}%

The aims of the present paper is to study the equation \eqref{equation} in
Herz-type Triebel-Lizorkin spaces $\dot{K}_{p,q}^{\alpha }F_{\beta }^{s}$.
These spaces unify and generalize the classical Lebesgue spaces of power
weights, fractional Sobolev spaces of power weights and Triebel-Lizorkin
spaces of power weights. We will assume that $G\ $belongs to $G\in Lip\mu $,
see Section 3 for the definition of the spaces $Lip\mu $.

We recall that the solution in the function space $\dot{K}_{p,q}^{\alpha }F_{\beta }^{s}$ of the integral equation

\begin{equation}
u(t,x)=e^{t\Delta }u_{0}+\int_{0}^{t}e^{(t-\tau )\Delta }G(u)(\tau ,x)d\tau 
\label{int-equa1}
\end{equation}
is usually defined as the mild solution of the  Cauchy problem \eqref{equation}. Under some assumption on $p,q\ \beta ,\alpha $ and $s$ we prove that for all initial
data $u_{0}$ in $\dot{K}_{p,q}^{\alpha }F_{\beta }^{s}$ with $s>\bar{s}=%
\frac{n}{p}+\alpha -\frac{2}{\mu -1}$, there exists a maximal solution $u$
to \eqref{int-equa1} in $C([0,T_{0}),\dot{K}_{p,q}^{\alpha }F_{\beta }^{s})$
with $T_{0}\geq C\big\|u_{0}\big\|_{\dot{K}_{p,q}^{\alpha }F_{\beta }^{s}}^{-%
	\frac{1}{\vartheta }}$. If $\theta <(s-\bar{s})(\mu -1)$, then we prove that 
\begin{equation}
u-e^{t\Delta }u_{0}\in C([0,T_{0}),\dot{K}_{p,q}^{\alpha }F_{\beta
}^{s+\theta }).  \label{Ribaud}
\end{equation}%
Now if $\theta =(s-\bar{s})(\mu -1),s>1$ with $G\in Lips_{0}$\ and 
\begin{equation*}
s_{0}=\frac{\frac{n}{p}+\alpha }{\frac{n}{p}+\alpha -s+1},
\end{equation*}%
then we have \eqref{Ribaud}, which was not treated in \cite{Ri98}. Our
results cover the corresponding results of \cite{Ri98}. Moreover, we present
the limit case 
\begin{equation*}
s=1+\frac{\mu -1}{\mu }\big(\frac{n}{p}+\alpha \big)
\end{equation*}%
and the case\ when $s>\frac{n}{p}+\alpha $. To study \eqref{equation} we
investigate sufficient conditions on $G$ such that 
\begin{equation*}
\{G(f):f\in \dot{K}_{p,q}^{\alpha }F_{\beta }^{s}\}\subset \dot{K}%
_{p,q}^{\alpha }F_{\beta }^{s}.
\end{equation*}%
In Sobolev space, \cite{MM} have presented the necessary and sufficient
conditions on $G$ such that 
\begin{equation*}
G(W_{p}^{1}(\mathbb{R}^{n}))\subset W_{p}^{1}(\mathbb{R}^{n}),
\end{equation*}%
except the case $p=n\geq 2$. A complete characterization of this problem in
Sobolev spaces\ has been given by Bourdaud in \cite{Bo91} and \cite{Bo10}.
The surprise result in Sobolev spaces is that under some assumptions there
is no non-trivial function $G$ which acts via left composition on such
spaces. More precisely, in 1978 Dahlberg \cite{Da79} proved that%
\begin{equation*}
G(f)\in W_{p}^{m}(\mathbb{R}^{n}),\quad f\in W_{p}^{m}(\mathbb{R}^{n}),\quad
1<p<\infty ,\quad 2\leq m<\frac{n}{p}
\end{equation*}%
implies $G(t)=ct$ for some $c\in \mathbb{R}$. In the framework of Sobolev
spaces with fractional order, $H^{s}(\mathbb{R}),0<s<1,s\neq 2$, Igari in 
\cite{Igari65} gave the necessary and sufficient conditions on $G$ such that 
$G(H^{s}(\mathbb{R}))\subset H^{s}(\mathbb{R})$. He observed the necessity
of local Lipschitz continuity for the first time. See \cite{J89} for the
Hardy-Sobolev space $F_{2}^{1,2}(\mathbb{R}^{n})$.

The extension of the above results to Besov and Triebel-Lizorkin spaces is
given by Bourduad in \cite{Bo93} and \cite{Bo931}, Runst in \cite{Ru86}, and
Sickel in \cite{Si97}, \cite{Si98} and \cite{Si98-1}. Further results
concerning the composition operators in Besov and Triebel-Lizorkin spaces
are given \cite{BK}, \cite{BCS06}, \cite{Bou-Cri08}, \cite{BMS10}, \cite%
{BM01} and \cite{RS96}.
Recently, Bourdaud and Moussai \cite{BM19} proved the continuity of
the composition operator in $W_{p}^{m}(\mathbb{R}^{n})\cap \dot{W}_{mp}^{1}(%
\mathbb{R}^{n})$ to itself, for every integer $m\geq 2$ and any $1\leq
p<\infty $ and in Sobolev spaces $W_{p}^{m}(\mathbb{R}^{n})$, with $m\geq 2$
and $1\leq p<\infty $.
The author in \cite{Dr21.1} and \cite{Dr21.3} gave the necessary
and sufficient conditions on $G$ such that 
\begin{equation*}
G(W_{p}^{m}(\mathbb{R}^{n},|\cdot |^{\alpha }))\subset W_{p}^{m}(\mathbb{R}%
^{n},|\cdot |^{\alpha }),\quad \text{(Sobolev space of power
	weight),}
\end{equation*}%
with some suitable assumptions on $m,p$ and $\alpha $. The extension of
Dahlberg result to Triebel-Lizorkin spaces of power weights $F_{p,q}^{s}(%
\mathbb{R}^{n},|\cdot |^{\alpha })$ is given in \cite{Dr21.2}.

\subsection{Notation and conventions}

Throughout this paper, we denote by $\mathbb{R}^{n}$ the $n$-dimensional
real Euclidean space, $\mathbb{N}$ the collection of all natural numbers and 
$\mathbb{N}_{0}=\mathbb{N}\cup \{0\}$. The letter $\mathbb{Z}$ stands for
the set of all integer numbers.\ The expression $f\lesssim g$ means that $%
f\leq c\,g$ for some independent constant $c$ (and non-negative functions $f$
and $g$), and $f\approx g$ means $f\lesssim g\lesssim f$. As usual for any $%
x\in \mathbb{R}$, $\left\lfloor x\right\rfloor $ stands for the largest
integer smaller than or equal to $x$.\vskip5pt

For $x\in \mathbb{R}^{n}$ and $r>0$ we denote by $B(x,r)$ the open ball in $%
\mathbb{R}^{n}$ with center $x$ and radius $r$. By supp$f$ we denote the
support of the function $f$, i.e., the closure of its non-zero set. If $%
E\subset {\mathbb{R}^{n}}$ is a measurable set, then $|E|$ stands for the
(Lebesgue) measure of $E$ and $\chi _{E}$ denotes its characteristic
function. For any $u>0$, we set $C(u)=\{x\in \mathbb{R}^{n}:\frac{u}{2}%
<\left\vert x\right\vert \leq u\}$. By $c$ we denote generic positive
constants, which may have different values at different occurrences. \vskip%
5pt

Given a measurable set $E\subset \mathbb{R}^{n}$ and $0<p\leq \infty $, we
denote by $L^{p}(E)$ the space of all functions $f:E\rightarrow \mathbb{C}$
equipped with the quasi-norm 
\begin{equation*}
\big\|f\big\|_{L^{p}(E)}=\Big(\int_{E}\left\vert f(x)\right\vert ^{p}dx\Big)%
^{1/p}<\infty
\end{equation*}%
with $0<p<\infty $ and%
\begin{equation*}
\big\|f\big\|_{L^{\infty }(E)}=\underset{x\in E}{\text{ess-sup}}\left\vert
f(x)\right\vert <\infty .
\end{equation*}%
If $E=\mathbb{R}^{n}$, then we put $L^{p}(\mathbb{R}^{n})=L^{p}$ and $\big\|f%
\big\|_{L^{p}(\mathbb{R}^{n})}=\big\|f\big\|_{p}.$

Let $w$ denote a positive, locally integrable function and $0<p<\infty $.
Then the weighted Lebesgue space $L^{p}(\mathbb{R}^{n},w)$ contains all
measurable functions $f$ such that 
\begin{equation*}
\big\|f\big\|_{L^{p}(\mathbb{R}^{n},w)}=\Big(\int_{\mathbb{R}^{n}}\left\vert
f(x)\right\vert ^{p}w(x)dx\Big)^{1/p}<\infty .
\end{equation*}%
If $1\leq p\leq \infty $ and $\frac{1}{p}+\frac{1}{p^{\prime }}=1$, then $%
p^{\prime }$ is called the conjugate exponent of $p$.

By $\mathcal{S}(\mathbb{R}^{n})$ we denote the Schwartz space of all
complex-valued, infinitely differentiable and rapidly decreasing functions
on $\mathbb{R}^{n}$ and by $\mathcal{S}^{\prime }(\mathbb{R}^{n})$ the dual
space of all tempered distributions on $\mathbb{R}^{n}$. We define the
Fourier transform of a function $f\in \mathcal{S}(\mathbb{R}^{n})$ by%
\begin{equation*}
\mathcal{F}(f)(\xi )=\left( 2\pi \right) ^{-n/2}\int_{\mathbb{R}%
	^{n}}e^{-ix\cdot \xi }f(x)dx.
\end{equation*}%
Its inverse is denoted by $\mathcal{F}^{-1}f$. Both $\mathcal{F}$ and $%
\mathcal{F}^{-1}$ are extended to the dual Schwartz space $\mathcal{S}%
^{\prime }(\mathbb{R}^{n})$ in the usual way.

For $v\in \mathbb{Z}$ and $m=(m_{1},...,m_{n})\in \mathbb{Z}^{n}$, let $%
Q_{v,m}$ be the dyadic cube in $\mathbb{R}^{n}$, $Q_{v,m}=%
\{(x_{1},...,x_{n}):m_{i}\leq 2^{v}x_{i}<m_{i}+1,i=1,2,...,n\}$. Also, we
set $\chi _{j,m}=\chi _{Q_{j,m}},j\in \mathbb{Z},m\in \mathbb{Z}^{n}.$

Recall that $\eta _{R,m}(x)=R^{n}(1+R\left\vert x\right\vert )^{-m}$, for
any $x\in \mathbb{R}^{n}$ and $m,R>0$. Note that $\eta _{R,m}\in L^{1}(%
\mathbb{R}^{n})$ when $m>n$ and that $\left\Vert \eta _{R,m}\right\Vert
_{1}=c_{m}$ is independent of $R$, where this type of function was
introduced in \cite{DHR} and \cite{HN07}.

\section{Function spaces}

In this section we\ present the Fourier analytical definition of Herz-type
Triebel-Lizorkin spaces and we present their basic properties such us Sobolev
embeddings. We start by recalling the definition and some  properties
of Herz spaces. For convenience, we set 
\begin{equation*}
B_{k}=B(0,2^{k}),\quad \bar{B}_{k}=\{x\in {\mathbb{R}^{n}:|x|\leq }%
2^{k}\},\quad k\in \mathbb{Z}
\end{equation*}%
and 
\begin{equation*}
R_{k}=B_{k}\setminus B_{k-1},\quad \chi _{k}=\chi _{R_{k}},\quad k\in 
\mathbb{Z}.
\end{equation*}

\begin{defn}
	\label{def:herz} Let $0<p,q\leq \infty $ and $\alpha \in \mathbb{R}$. The
	homogeneous Herz space $\dot{K}_{{p},q}^{{\alpha }}$ is defined as the set
	of all $f\in L_{\loc}^{{p}}\left( {\mathbb{R}^{n}}\setminus \{0\}\right) $
	such that 
	\begin{equation*}
	\big\|f\big\|_{\dot{K}_{{p},q}^{{\alpha }}}=\Big(\sum\limits_{k\in \mathbb{Z}%
	}2^{k{\alpha q}}\big\|f\,\chi _{k}\big\|_{{p}}^{q}\Big)^{1/q}<\infty
	\end{equation*}%
	(with the usual modifications when $q=\infty $).
\end{defn}

\begin{rem}
	Let $0<p,q\leq \infty $ and $\alpha \in \mathbb{R}$.$\newline
	\mathrm{(i)}$ The space $\dot{K}_{{p},p}^{{\alpha }}$\ coincides with the
	Lebesgue space $L^{p}(\mathbb{R}^{n},|\cdot |^{\alpha p })$. In addition%
	\begin{equation*}
	\dot{K}_{p,p}^{0}=L^{p}\text{.}
	\end{equation*}%
	$\mathrm{(ii)}$ Let $0<q_{1}\leq q_{2}\leq \infty $. Then%
	\begin{equation*}
	\dot{K}_{p,q_{1}}^{\alpha }\hookrightarrow \dot{K}_{p,q_{2}}^{\alpha }.
	\end{equation*}%
	$\mathrm{(iii)}$ The spaces $\dot{K}_{p,q}^{\alpha }$ are quasi-Banach
	spaces and if $\min (p,q)\geq 1$ then $\dot{K}_{p,q}^{\alpha }$ are Banach
	spaces.
\end{rem}

\begin{rem}
	A detailed discussion of the properties of Herz spaces my be found in \cite%
	{HerYang99} and \cite{LDH08}, and references therein.
\end{rem}

To present the definition of Herz-type Triebel-Lizorkin spaces,  we first need
the concept of a smooth dyadic resolution of unity. Let $\psi $\ be a
function\ in $\mathcal{S}(\mathbb{R}^{n})$\ satisfying%
\begin{equation*}
0\leq \psi \leq 1\text{\quad and}\quad \psi (x)=\left\{ 
\begin{array}{ccc}
1, & \text{if } & \left\vert x\right\vert \leq 1, \\ 
0, & \text{if} & \left\vert x\right\vert \geq \frac{3}{2}.%
\end{array}%
\right.
\end{equation*}%
We put $\mathcal{F}\varphi _{0}=\psi $, $\mathcal{F}\varphi _{1}=\psi (\frac{%
	\cdot }{2})-\psi $\ and $\mathcal{F}\varphi _{j}=\mathcal{F}\varphi
_{1}(2^{1-j}\cdot )\ $for$\ j=2,3$,.... Then $\{\mathcal{F}\varphi
_{j}\}_{j\in \mathbb{N}_{0}}$\ is a smooth dyadic resolution of unity, $%
\sum_{j=0}^{\infty }\mathcal{F}\varphi _{j}(x)=1$ for all $x\in \mathbb{R}%
^{n}$.\ Thus we obtain the Littlewood-Paley decomposition 
\begin{equation*}
f=\sum_{j=0}^{\infty }\varphi _{j}\ast f
\end{equation*}%
of all $f\in \mathcal{S}^{\prime }(\mathbb{R}^{n})$ $($convergence in $%
\mathcal{S}^{\prime }(\mathbb{R}^{n}))$.

We are now in a position to state the definition of Herz-type
Triebel-Lizorkin spaces.

\begin{defn}
	\label{Herz-Besov-Triebel}\textit{Let }$\alpha ,s\in \mathbb{R},0<p,q<\infty 
	$\textit{\ and }$0<\beta \leq \infty $\textit{. The Herz-type
		Triebel-Lizorkin space }$\dot{K}_{p,q}^{\alpha }F_{\beta }^{s}$ \textit{is
		the collection of all} $f\in \mathcal{S}^{^{\prime }}(\mathbb{R}^{n})$%
	\textit{\ such that}%
	\begin{equation*}
	\big\|f\big\|_{\dot{K}_{p,q}^{\alpha }F_{\beta }^{s}}=\Big\|\Big(%
	\sum\limits_{j=0}^{\infty }2^{js\beta }\left\vert \varphi _{j}\ast
	f\right\vert ^{\beta }\Big)^{1/\beta }\Big\|_{\dot{K}_{p,q}^{\alpha
	}}<\infty ,
	\end{equation*}%
	\textit{with the obvious modification if }$\beta =\infty .$
\end{defn}

\begin{rem}
	Let\ $s\in \mathbb{R},0<p,q<\infty ,0<\beta \leq \infty $ and $\alpha >-%
	\frac{n}{p}$. The spaces\ $\dot{K}_{p,q}^{\alpha }F_{\beta }^{s}$ are
	independent of the particular choice of the smooth dyadic resolution of
	unity\ $\{\mathcal{F}\varphi _{j}\}_{j\in \mathbb{N}_{0}}$(in the sense of\
	equivalent quasi-norms). In particular $\dot{K}_{p,q}^{\alpha }F_{\beta
	}^{s} $\ are quasi-Banach spaces and if $p,q,\beta \geq 1$, then they are
	Banach spaces. Further results, concerning, for instance , lifting
	properties, Fourier multiplier and local means characterizations can be
	found in \cite{Drihem1.13}-\cite{Drihem2.13}-\cite{Drmana}, \cite{XuYang05}
	and \cite{Xu05}.
\end{rem}

Now we give the definition of the spaces $F_{p,\beta }^{s}$.\vskip5pt

\begin{defn}
	\textit{Let }$s\in \mathbb{R},0<p<\infty $\textit{\ and }$0<\beta \leq
	\infty $\textit{. The Triebel-Lizorkin space }$F_{p,\beta }^{s}$ \textit{is
		the collection of all} $f\in \mathcal{S}^{\prime }(\mathbb{R}^{n})$\textit{\
		such that}%
	\begin{equation*}
	\big\|f\big\|_{F_{p,\beta }^{s}}=\Big\|\Big(\sum\limits_{j=0}^{\infty
	}2^{js\beta }\left\vert \varphi _{j}\ast f\right\vert ^{\beta }\Big)%
	^{1/\beta }\Big\|_{p}<\infty .
	\end{equation*}
\end{defn}

The theory of the spaces $F_{p,\beta }^{s}$ has been developed in detail in 
\cite{Sawano18}, \cite{Triebel83} and \cite{Triebel92} but has a longer
history already including many contributors; we do not want to discuss this
here. Clearly, for $s\in \mathbb{R},0<p<\infty $ and $0<\beta \leq \infty ,$ 
\begin{equation*}
\dot{K}_{p,p}^{0}F_{\beta }^{s}=F_{p,\beta }^{s}.
\end{equation*}

Let $w\in \mathcal{A}_{\infty }$, Muckenhoupt classes, $s\in \mathbb{R}$, $%
0<\beta \leq \infty $ and $0<p<\infty $. We define weighted Triebel-Lizorkin
space $F_{p,\beta }^{s}(\mathbb{R}^{n},w)$ to be the set of all
distributions $f\in \mathcal{S}^{\prime }(\mathbb{R}^{n})$ such that%
\begin{equation*}
\big\|f\big\|_{F_{p,\beta }^{s}(\mathbb{R}^{n},w)}=\Big\|\Big(%
\sum\limits_{j=0}^{\infty }2^{js\beta }\left\vert \varphi _{j}\ast
f\right\vert ^{\beta }\Big)^{1/\beta }\Big\|_{L^{p}(\mathbb{R}^{n},w)}
\end{equation*}%
is finite. In the limiting case $\beta =\infty $ the usual modification is
required.

The spaces $F_{p,\beta }^{s}(\mathbb{R}^{n},w)=F_{p,\beta }^{s}(w)$ are
independent of the particular choice of the smooth dyadic resolution of
unity $\{\mathcal{F}\varphi _{j}\}_{j\in \mathbb{N}_{0}}$ appearing in their
definitions. They are quasi-Banach spaces (Banach spaces for $p,q\geq 1$).
Moreover, for $w\equiv 1$ we obtain the usual (unweighted) Triebel-Lizorkin
spaces. We refer, in particular, to the papers \cite{Bui82} and \cite{IzSa12}
for a comprehensive treatment of  weighted  function spaces. Let $w_{\gamma }$ be a
power weight, i.e., $w_{\gamma }(x)=|x|^{\gamma }$ with $\gamma >-n$. Then
we have%
\begin{equation*}
F_{p,\beta }^{s}(w_{\gamma })=\dot{K}_{p,p}^{\frac{\gamma }{p}}F_{\beta
}^{s},
\end{equation*}%
in the sense of equivalent quasi-norms.

\begin{defn}
	$\mathrm{(i)}$ Let $1<p<\infty ,0<q<\infty ,-\frac{n}{p}<\alpha <n(1-\frac{1%
	}{p})$ and $s\in \mathbb{R}$. Then the Herz-type Bessel potential space $%
	\dot{k}_{p,s}^{\alpha ,q}$ is the\textit{\ }collection of all $f\in \mathcal{%
		S}^{\prime }(\mathbb{R}^{n})$\ such that%
	\begin{equation*}
	\big\|f\big\|_{\dot{k}_{p,s}^{\alpha ,q}}=\big\|(1+|\xi |^{2})^{\frac{s}{2}%
	}\ast f\big\|_{\dot{K}_{p,q}^{\alpha }}<\infty .
	\end{equation*}%
	$\mathrm{(ii)}$ Let $1<p<\infty ,0<q<\infty ,-\frac{n}{p}<\alpha <n(1-\frac{1%
	}{p})$ and $m\in \mathbb{N}$. The homogeneous Herz-type Sobolev space $\dot{W%
	}_{p,m}^{\alpha ,q}$ is the collection of all $f\in \mathcal{S}^{\prime }(%
	\mathbb{R}^{n})$\ such that%
	\begin{equation*}
	\big\|f\big\|_{\dot{W}_{p,m}^{\alpha ,q}}=\sum\limits_{|\beta |\leq m}\Big\|%
	\frac{\partial ^{\beta }f}{\partial x^{\beta }}\Big\|_{\dot{K}_{p,q}^{\alpha
	}}<\infty ,
	\end{equation*}%
	where the derivatives must be understood in the sense of distribution.
\end{defn}

In the following, we will present the connection between the Herz-type
Triebel-Lizorkin spaces and the Herz-type Bessel potential spaces; see \cite%
{LuYang97} and \cite{XuYang03}. Let $1<p,q<\infty $ and $-\frac{n}{p}<\alpha
<n(1-\frac{1}{p})$.$\ $If $s\in \mathbb{R}$, then%
\begin{equation*}
\dot{K}_{p,q}^{\alpha }F_{2}^{s}=\dot{k}_{p,s}^{\alpha ,q}
\end{equation*}%
with equivalent norms. If $s=m\in \mathbb{N}$, then

\begin{equation*}
\dot{K}_{p,q}^{\alpha }F_{2}^{m}=\dot{W}_{p,m}^{\alpha ,q}
\end{equation*}%
with equivalent norms. In particular%
\begin{equation*}
\dot{K}_{p,p}^{\alpha }F_{2}^{m}=W_{m}^{p}(\mathbb{R}^{n},|\cdot |^{\alpha
	p})\quad \text{(Sobolev spaces of power weights)}
\end{equation*}%
and%
\begin{equation}
\dot{K}_{p,p}^{0}F_{2}^{m}=W_{m}^{p}\quad \text{(Sobolev spaces),}\quad \dot{%
	K}_{p,q}^{\alpha }F_{2}^{0}=\dot{K}_{p,q}^{\alpha }.  \label{properties1}
\end{equation}%
Let $0<\theta <1,$ $0<p_{0}, p_{1},q_{0},q_{1}<\infty$, $0 <\beta_{0},\beta_{1} \leq\infty$ and $\alpha _{0}, \alpha _{1},s_{0}, s_{1}\in \mathbb{R}. $ We set%
\begin{equation*}
\frac{1}{p}=\frac{%
	1-\theta }{p_{0}}+\frac{\theta }{p_{1}},\quad \frac{1}{q}=\frac{1-\theta }{%
	q_{0}}+\frac{\theta }{q_{1}},\quad \frac{1}{\beta }=\frac{1-\theta }{\beta
	_{0}}+\frac{\theta }{\beta _{1}}
\end{equation*}%
and%
\begin{equation*}
\alpha =(1-\theta )\alpha _{0}+\theta \alpha _{1},\quad s=(1-\theta )s_{0}+\theta s_{1}.
\end{equation*}%
As an immediate consequence of H\"{o}lder's inequality we have the so-called
interpolation inequalities:%
\begin{equation}
\big\|f\big\|_{\dot{K}_{p,q}^{\alpha }F_{\beta }^{s}}\leq \big\|f\big\|_{%
	\dot{K}_{p_{0},q_{0},}^{\alpha _{0}}F_{\beta _{0}}^{s_{0}}}^{1-\theta }\big\|%
f\big\|_{\dot{K}_{p_{1},q_{1}}^{\alpha _{1}}F_{\beta _{1}}^{s_{1}}}^{\theta }
\label{Interpolation}
\end{equation}%
holds for all $f\in \dot{K}_{p_{0},q_{0}}^{\alpha _{0}}F_{\beta
	_{0}}^{s_{0}}\cap \dot{K}_{p_{1},q_{1}}^{\alpha _{1}}F_{\beta _{1}}^{s_{1}}$.

We collect some embeddings on these functions spaces as obtained in \cite%
{Drihem2.13}.

\begin{thm}
	\label{embeddings3}\textit{Let }$\alpha _{1},\alpha _{2},s_{1},s_{2}\in 
	\mathbb{R},0<s,p,q,r<\infty ,0<\beta \leq \infty ,\alpha _{1}>-\frac{n}{s}\ $%
	\textit{and }$\alpha _{2}>-\frac{n}{q}$. \textit{We suppose that }%
	\begin{equation*}
	s_{1}-\frac{n}{s}-\alpha _{1}=s_{2}-\frac{n}{q}-\alpha _{2}.
	\end{equation*}%
	\textit{Let }$0<q\leq s<\infty $ and $\alpha _{2}\geq \alpha _{1}$. The
	embedding%
	\begin{equation*}
	\dot{K}_{q,r}^{\alpha _{2}}F_{\infty }^{s_{2}}\hookrightarrow \dot{K}%
	_{s,p}^{\alpha _{1}}F_{\beta }^{s_{1}}
	\end{equation*}%
	holds if $0<r\leq p<\infty $.
\end{thm}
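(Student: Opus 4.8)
The plan is to prove the Sobolev-type embedding $\dot{K}_{q,r}^{\alpha_2}F_\infty^{s_2}\hookrightarrow\dot{K}_{s,p}^{\alpha_1}F_\beta^{s_1}$ by reducing it, via a careful handling of the inner $\ell^\beta$-sum against the $\ell^\infty$-norm on the left, to an embedding of Herz spaces proper. First I would recall the standard pointwise trick for Sobolev embeddings of Triebel–Lizorkin type: writing $g_j=\varphi_j\ast f$ and using that $\sum_j 2^{js_1\beta}|g_j|^\beta$ can be estimated, after splitting the scale of frequencies, in terms of $\sup_j 2^{js_2}|g_j|$ together with the gain $s_2-s_1>0$ coming from the hypothesis $s_1-\tfrac ns-\alpha_1=s_2-\tfrac nq-\alpha_2$ combined with $q\le s$ and $\alpha_2\ge\alpha_1$ (so that indeed $s_2\ge s_1$, with equality only in the degenerate case). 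The crucial device is that for a function $g_j$ with Fourier support in a ball of radius $\sim 2^j$, one has the Nikolskii-type inequality controlling the $L^s$-``local'' norm by the $L^q$-``local'' norm at the cost of a factor $2^{jn(1/q-1/s)}$; this is exactly what converts the difference in the smoothness indices into the difference in integrability/Herz indices.

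The key steps, in order, would be: (1) fix the Littlewood–Paley pieces and note $\big\|f\big\|_{\dot{K}_{s,p}^{\alpha_1}F_\beta^{s_1}}=\big\|(\sum_j 2^{js_1\beta}|g_j|^\beta)^{1/\beta}\big\|_{\dot{K}_{s,p}^{\alpha_1}}$; (2) use the elementary bound $(\sum_j 2^{js_1\beta}|g_j|^\beta)^{1/\beta}\lesssim \sup_j 2^{js_1+\epsilon}|g_j|\cdot(\sum_j 2^{-j\epsilon\beta})^{1/\beta}$ type estimate — more precisely, since $s_2>s_1$ we can absorb the geometric sum and dominate the $\ell^\beta$-quasi-norm by a constant times $\sup_j 2^{j\sigma}|g_j|$ for any $\sigma<s_2$ with an additional $2^{j(s_1-\sigma)}$ weight to be reinvested; (3) apply the Nikolskii/Plancherel–Polya inequality on each dyadic annulus $R_k$ to trade $\big\|g_j\chi_k\big\|_s$ for $2^{jn(1/q-1/s)}\big\|g_j\chi_k^\ast\big\|_q$ (with $\chi_k^\ast$ a slightly enlarged characteristic function accounting for the tails of the convolution kernel, handled by the $\eta_{R,m}$ functions introduced in the notation section); (4) reassemble the Herz $\dot{K}_{s,p}^{\alpha_1}$-norm, using $\alpha_2\ge\alpha_1$ and $0<r\le p<\infty$ to pass from the $\ell^p_k$-summation to the $\ell^r_k$-summation (this is just the monotonicity of $\ell^p$-spaces in the exponent, exactly as in Remark~(ii) following Definition~\ref{def:herz}), and to absorb the weight discrepancy $2^{k(\alpha_1-\alpha_2)}$; (5) verify that the exponents balance: the factor $2^{jn(1/q-1/s)}$ combined with $2^{j(s_1-s_2)}$ and the shift by $\alpha_1-\alpha_2$ on dyadic annuli telescopes to a bounded quantity precisely because of the scaling relation $s_1-\tfrac ns-\alpha_1=s_2-\tfrac nq-\alpha_2$.

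The main obstacle I expect is step (3)–(4): controlling the \emph{localized} $L^s$ norm $\big\|g_j\chi_k\big\|_s$ by a localized $L^q$ norm in a way compatible with the Herz structure. The convolution $g_j=\varphi_j\ast f$ does not respect the annuli $R_k$, so one cannot simply apply Nikolskii's inequality annulus-by-annulus; instead one must dominate $|g_j(x)|$ pointwise by a maximal-type or by the $\eta_{2^j,m}$-averaged quantity $(\eta_{2^j,m}\ast|g_j|^{\,t})^{1/t}$ for a suitable $t<\min(q,s)$, which is the technique from \cite{DHR}, \cite{HN07}. Handling the off-diagonal contributions (the mass of $\eta_{2^j,m}$ centered in $R_k$ but spread over $R_{k'}$ with $k'\ne k$) requires the standard geometric summation over $|k-k'|$, which converges thanks to the rapid decay $m>n$ and eats only a harmless constant; keeping track of the $\alpha$-weights through this convolution is the delicate bookkeeping. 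Since this is precisely the embedding proved in \cite{Drihem2.13}, I would, if pressed for brevity, simply cite that reference for the full details after indicating the reduction above.
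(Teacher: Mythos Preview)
The paper does not give a proof of this theorem at all: it is stated as a quotation from \cite{Drihem2.13} (see the sentence immediately preceding the statement, ``We collect some embeddings on these function spaces as obtained in \cite{Drihem2.13}''). Your proposal correctly identifies this source and ends by citing it, so in that sense you match the paper exactly.

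The sketch you give beforehand is broadly the right architecture for such Sobolev-type embeddings (Nikolskii/Plancherel--P\'olya on frequency-localized pieces, off-diagonal control via the $\eta_{2^j,m}$ kernels, then reassembly of the Herz norm using $r\le p$ and $\alpha_2\ge\alpha_1$). One caution: your step~(2) as written is a little garbled. A direct pointwise bound
\[
\Big(\sum_j 2^{js_1\beta}|g_j|^\beta\Big)^{1/\beta}\lesssim \sup_j 2^{js_2}|g_j|
\]
(valid when $s_2>s_1$) is not enough on its own, because $\dot K_{q,r}^{\alpha_2}\hookrightarrow\dot K_{s,p}^{\alpha_1}$ is \emph{false} in general for $q<s$; one really must keep the frequency index $j$ alive and apply the Nikolskii inequality level by level before passing to the Herz norm, so that the factor $2^{jn(1/q-1/s)}$ can combine with $2^{j(s_1-s_2)}$ and the annulus weight $2^{k(\alpha_1-\alpha_2)}$. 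You do say this in steps~(3)--(5), so the ingredients are all there; just be aware that step~(2) cannot be separated from step~(3) the way your phrasing suggests.
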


Let $0<p,q< \infty $. For later use, we introduce the following
abbreviations:%
\begin{equation*}
\sigma _{p}=n\max \Big(\frac{1}{p}-1,0\Big)\quad \text{and}\quad \sigma
_{p,q}=n\max \Big(\frac{1}{p}-1,\frac{1}{q}-1,0\Big).
\end{equation*}%
In the next we shall interpret $L_{\mathrm{loc}}^{1}$ as the set of regular
distributions, see \cite{Dr21}.

\begin{thm}
	\label{regular-distribution1 copy(1)}\textit{Let }$0<p,q<\infty ,0<\beta
	\leq \infty ,\alpha >-\frac{n}{p}$ and $s>\max (\sigma _{p},\frac{n}{p}%
	+\alpha -n)$. Then%
	\begin{equation*}
	\dot{K}_{p,q}^{\alpha }F_{\beta }^{s}\hookrightarrow L_{\mathrm{loc}}^{1}.
	\end{equation*}
\end{thm}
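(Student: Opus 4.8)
The plan is to localize the Littlewood--Paley decomposition and to split the sum $f=\sum_{j\geq 0}\varphi_j\ast f$ into low- and high-frequency pieces, estimating the action of each piece on a fixed dyadic annulus $R_k$ against the norm $\|f\|_{\dot K_{p,q}^{\alpha}F_{\beta}^{s}}$. More precisely, for $f\in\dot K_{p,q}^{\alpha}F_{\beta}^{s}$ one wants to show $f\in L^1(K)$ for every compact $K\subset\Rn$, equivalently $f\chi_k\in L^1$ for every $k\in\Z$ with control uniform in the relevant scale. Since $\dot K_{p,q}^{\alpha}F_{\infty}^{s}\hookrightarrow \dot K_{p,q}^{\alpha}F_{\beta}^{s}$ in the sense that the smallest $\beta$ gives the largest space only up to a point, one first reduces, using the monotonicity $\dot K_{p,q}^{\alpha}F_{\beta_1}^{s}\hookrightarrow\dot K_{p,q}^{\alpha}F_{\beta_2}^{s}$ for $\beta_1\le\beta_2$ and the Sobolev-type embedding of Theorem~\ref{embeddings3}, to the case $\beta$ large (say $\beta=\infty$) and to showing $\dot K_{p,q}^{\alpha}F_{\beta}^{s}\hookrightarrow \dot K_{1,1}^{0}F_{\min(1,\beta)}^{0}=L^1$ under the stated hypotheses, or directly that the partial sums converge in $L^1_{\loc}$.

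The key steps I would carry out are as follows. First, write $\|f\chi_k\|_1\le\sum_{j\geq 0}\|(\varphi_j\ast f)\chi_k\|_1$ and apply H\"older's inequality on the annulus $R_k$: $\|(\varphi_j\ast f)\chi_k\|_1\le |R_k|^{1-1/p}\|(\varphi_j\ast f)\chi_k\|_p\approx 2^{kn(1-1/p)}\|(\varphi_j\ast f)\chi_k\|_p$ when $p>1$ (when $p\le 1$ one uses instead the embedding $\ell^1\hookrightarrow\ell^p$ together with the condition $s>\sigma_p$ and a standard maximal-function / Nikol'skij argument to sum the $j$'s). Second, insert the factor $2^{js}$ to match the Triebel--Lizorkin structure: $\|(\varphi_j\ast f)\chi_k\|_p\le 2^{-js}\big\|2^{js}|\varphi_j\ast f|\,\chi_k\big\|_p\le 2^{-js}\big\|(\sum_{l}2^{ls\beta}|\varphi_l\ast f|^{\beta})^{1/\beta}\chi_k\big\|_p$. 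Third, multiply by $2^{k\alpha}$ and take $\ell^q$ in $k$: this produces, after summing the geometric-type series in $j$ (which converges because $s>\frac np+\alpha-n$ forces the relevant exponent to be negative once the scale $k\approx -j$ is accounted for), the bound $\|f\|_{L^1_{\loc}}\lesssim\|f\|_{\dot K_{p,q}^{\alpha}F_{\beta}^{s}}$. The careful bookkeeping is that for a fixed compact set one only needs finitely many annuli $R_k$ with $k\le k_0$, and on those annuli the weight $2^{k\alpha}$ and the volume factor $2^{kn(1-1/p)}$ combine with $2^{-js}$ so that the double sum over $j\ge 0$ and $k\le k_0$ is summable precisely under $s>\frac np+\alpha-n$; the condition $s>\sigma_p$ handles the regime $p\le 1$ where the triangle inequality in $L^p$ must be replaced.

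The main obstacle I expect is the interplay between the two conditions $s>\sigma_p$ and $s>\frac np+\alpha-n$, i.e. making the summation over $j$ of the high-frequency terms converge while simultaneously controlling the growth of the weight $2^{k\alpha}$ over the (infinitely many, a priori) annuli. For $p\le 1$ one cannot simply use Minkowski's inequality, so the standard device is to invoke a sub-additivity estimate of the form $\|\sum_j g_j\|_{\dot K_{p,q}^{\alpha}}^{\min(p,q)}\lesssim\sum_j\|g_j\|_{\dot K_{p,q}^{\alpha}}^{\min(p,q)}$ for functions $g_j$ with Fourier support in dyadic annuli $|\xi|\approx 2^j$, together with the Plancherel--Polya--Nikol'skij inequality to pass from $L^p$-norms of $\varphi_j\ast f$ to maximal functions, and then Fefferman--Stein. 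Assembling these while keeping the constants independent of the resolution of unity (which is legitimate by the remark following Definition~\ref{Herz-Besov-Triebel}, given $\alpha>-\frac np$) is the delicate point; once that sub-additivity and the geometric decay in $j$ are in hand, the remaining estimates are routine.
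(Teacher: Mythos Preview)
The paper does not supply a proof of this theorem; it is stated as a known result, with the reference \cite{Dr21} given immediately before the statement. There is therefore no ``paper's own proof'' to compare against.

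On the merits of your proposal: the overall scheme --- localize to dyadic annuli $R_k$, apply H\"older on each annulus, insert the factor $2^{js}$, and sum --- is the standard one and works cleanly in the regime $p>1$, $-\frac{n}{p}<\alpha<n(1-\frac{1}{p})$. Several details are off, however. First, the identification $\dot K_{1,1}^{0}F_{\min(1,\beta)}^{0}=L^1$ is false: for instance $\dot K_{1,1}^{0}F_{2}^{0}=F_{1,2}^{0}$ is the local Hardy space, not $L^1$, and no choice of the inner index repairs this. Second, and more seriously, your claim that the double sum over $j\ge 0$ and $k\le k_0$ converges ``because $s>\frac{n}{p}+\alpha-n$ forces the relevant exponent to be negative once the scale $k\approx -j$ is accounted for'' does not correspond to the estimate you wrote down: in the H\"older-on-annuli step there is no coupling between $j$ and $k$. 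After summing in $j$ (which requires only $s>0$) one is left with $\sum_{k\le k_0}2^{k(n(1-1/p)-\alpha)}\cdot 2^{k\alpha}\|(\cdot)\chi_k\|_p$, and when $\alpha\ge n(1-\frac{1}{p})$ the prefactor blows up as $k\to-\infty$, so the sum diverges. This is exactly the regime where the hypothesis $s>\frac{n}{p}+\alpha-n$ is not automatic and must be \emph{used}, not just observed.

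The fix is the route you mention but do not actually take: invoke the Sobolev-type embedding of Theorem~\ref{embeddings3} first. The conditions $s>\sigma_p$ and $s>\frac{n}{p}+\alpha-n$ together are precisely what allow you to embed $\dot K_{p,q}^{\alpha}F_{\beta}^{s}$ into some $\dot K_{p_1,q_1}^{\alpha_1}F_{2}^{s_1}$ with $p_1>1$, $s_1>0$, and $-\frac{n}{p_1}<\alpha_1<n(1-\frac{1}{p_1})$ (along the line $s_1-\frac{n}{p_1}-\alpha_1=s-\frac{n}{p}-\alpha$); on the target space your direct annulus argument then goes through without obstruction. As written, your proposal identifies the right ingredients but does not assemble them into a proof that covers the full parameter range.
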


For any $a>0$, $f\in \mathcal{S}^{\prime }(\mathbb{R}^{n})$ and $x\in 
\mathbb{R}^{n}$, we denote, Peetre maximal function, 
\begin{equation*}
\varphi _{j}^{\ast ,a}f(x)=\sup_{y\in \mathbb{R}^{n}}\,\frac{\left\vert
	\varphi _{j}\ast f(y)\right\vert }{(1+2^{j}\left\vert x-y\right\vert )^{a}}%
,\quad j\in \mathbb{N}_{0}.
\end{equation*}%
We now present a fundamental characterization of the above spaces, which
plays an essential role in this paper, see \cite[Theorem 1]{Xu05}.

\begin{thm}
	\label{Peetre maximal function}Let $s\in \mathbb{R},0<p,q<\infty ,0<\beta
	\leq \infty $ and $\alpha >-\frac{n}{p}$. Let $a>\frac{n}{\min (p,\beta )}$.
	Then%
	\begin{equation*}
	\big\Vert f\big\Vert_{\dot{K}_{p,q}^{\alpha }F_{\beta }^{s}}^{\star }=%
	\Big\Vert\Big(\sum_{j=0}^{\infty }2^{js\beta }(\varphi _{j}^{\ast
		,a}f)^{\beta }\Big)^{1/\beta }\Big\Vert_{\dot{K}_{p,q}^{\alpha }},
	\end{equation*}%
	is an\ equivalent quasi-norm in $\dot{K}_{p,q}^{\alpha }F_{\beta }^{s}.$
\end{thm}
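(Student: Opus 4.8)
The plan is to reduce the Herz-type statement to the corresponding maximal-function characterization already available for weighted Triebel--Lizorkin spaces, or, if one prefers a self-contained argument, to adapt the classical Peetre maximal function technique to the Herz setting. I will sketch the direct approach. One inequality is trivial: since $|\varphi_j\ast f(x)|\le \varphi_j^{\ast,a}f(x)$ pointwise for every $a>0$, we immediately get $\|f\|_{\dot K_{p,q}^\alpha F_\beta^s}\le \|f\|_{\dot K_{p,q}^\alpha F_\beta^s}^\star$. The content is the reverse estimate $\|f\|_{\dot K_{p,q}^\alpha F_\beta^s}^\star\lesssim \|f\|_{\dot K_{p,q}^\alpha F_\beta^s}$ when $a>n/\min(p,\beta)$.

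First I would recall the standard pointwise inequality underlying all Peetre-type arguments: for $a>0$ and any $r>0$ with $r$ to be chosen, one has
\begin{equation*}
\big(\varphi_j^{\ast,a}f(x)\big)^{r}\lesssim \sum_{k\ge 0}2^{-kar}2^{(j+k)n}\int_{\R^n}\frac{|\varphi_{j+k}\ast f(y)|^{r}}{(1+2^{j}|x-y|)^{ar}}\,dy,
\end{equation*}
which follows from the fact that $\varphi_j\ast f$ has Fourier support in a ball of radius $\sim 2^j$, combined with a standard decomposition of $f$ into the pieces $\varphi_{j+k}\ast f$; this is precisely the estimate from \cite{Xu05} (or Rychkov-type lemmas). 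Choosing $r<\min(p,\beta)$ so that $ar>n$, the right-hand side is controlled by the Hardy--Littlewood maximal function: $\varphi_j^{\ast,a}f(x)\lesssim \sum_{k\ge 0}2^{-k(a-n/r)r\cdot(1/r)}\big(\mathcal M(|\varphi_{j+k}\ast f|^{r})(x)\big)^{1/r}$, i.e.
\begin{equation*}
\varphi_j^{\ast,a}f(x)\lesssim \sum_{k\ge 0}2^{-k\delta}\big(\mathcal M(|\varphi_{j+k}\ast f|^{r})(x)\big)^{1/r},\qquad \delta:=a-\tfrac{n}{r}>0.
\end{equation*}

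The remaining work is to insert this into the $\dot K_{p,q}^\alpha(\ell^\beta_s)$ quasi-norm and absorb the sum over $k$. After multiplying by $2^{js}$, taking the $\ell^\beta$ norm in $j$, and using the geometric decay in $k$ together with Minkowski's / the triangle inequality in $\ell^\beta$ (for $\beta\ge 1$; for $\beta<1$ use the $\beta$-triangle inequality and that $\sum 2^{-k\delta\beta}<\infty$), one is reduced to estimating
\begin{equation*}
\Big\|\Big(\sum_{j\ge 0}2^{js\beta}\big(\mathcal M(|\varphi_j\ast f|^{r})\big)^{\beta/r}\Big)^{1/\beta}\Big\|_{\dot K_{p,q}^\alpha}=\Big\|\Big(\sum_{j\ge 0}\big(\mathcal M(|2^{jsr}\varphi_j\ast f|^{r/r})\big)\cdot 2^{0}\Big)^{\cdots}\Big\|,
\end{equation*}
which, after the substitution $g_j=2^{js}|\varphi_j\ast f|$, is exactly a vector-valued maximal inequality of Fefferman--Stein type in the Herz space $\dot K_{p,q}^\alpha$: namely $\big\|(\sum_j (\mathcal M g_j^r)^{\beta/r})^{1/\beta}\big\|_{\dot K_{p,q}^\alpha}\lesssim \big\|(\sum_j g_j^\beta)^{1/\beta}\big\|_{\dot K_{p,q}^\alpha}$, valid because $r<\min(p,\beta)$ so that both $p/r>1$ and $\beta/r>1$. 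The boundedness of $\mathcal M$ on $\dot K_{p,q}^\alpha$ (for $-n/p<\alpha<n(1-1/p)$, hence certainly under $\alpha>-n/p$ together with the hypotheses in force) and its vector-valued extension are standard facts about Herz spaces recorded in \cite{HerYang99,LDH08}; I would cite these. Independence of the quasi-norm from the resolution of unity (already stated in the Remark after Definition \ref{Herz-Besov-Triebel}) closes the argument.

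The main obstacle is the Fefferman--Stein vector-valued maximal inequality on the Herz space $\dot K_{p,q}^\alpha$ with the correct range of exponents; this is where the restriction $a>n/\min(p,\beta)$ enters (it forces the existence of an admissible $r$), and one must be slightly careful that the power-weight structure of $\dot K_{p,q}^\alpha$ stays within the Muckenhoupt range needed for $\mathcal M$ to be bounded — which is guaranteed by $\alpha>-n/p$ in the relevant formulation. Everything else is the routine Peetre maximal function machinery adapted verbatim from the unweighted/weighted case.
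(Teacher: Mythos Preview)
The paper does not actually prove this theorem; it simply cites \cite[Theorem~1]{Xu05}. So there is no in-paper argument to compare against, and your task reduces to checking that your sketch stands on its own.

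The overall strategy (Peetre's pointwise inequality, choice of $r<\min(p,\beta)$ with $ar>n$, reduction to a vector-valued maximal estimate) is the standard one and is correctly outlined. However, there is a genuine gap at the last step. You assert that the Fefferman--Stein inequality on $\dot K_{p,q}^{\alpha}$ is ``guaranteed by $\alpha>-n/p$ in the relevant formulation''; this is not so. The boundedness of $\mathcal M$ on $\dot K_{p,q}^{\alpha}$ (and its vector-valued extension, Lemma~\ref{Maximal-Inq}) requires the two-sided condition $-\tfrac{n}{p}<\alpha<n(1-\tfrac{1}{p})$, and the upper bound is essential: already for $f=\chi_{B(0,1)}$ one has $\mathcal Mf(x)\approx |x|^{-n}$ at infinity, which fails to lie in $\dot K_{p,q}^{\alpha}$ once $\alpha\ge n(1-\tfrac1p)$. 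After your rescaling $p\mapsto p/r$, $\alpha\mapsto r\alpha$, the admissibility condition becomes $\alpha<\tfrac{n}{r}-\tfrac{n}{p}$, so combined with $r>n/a$ you would need $a>\alpha+\tfrac{n}{p}$. For large positive $\alpha$ this is strictly stronger than the stated hypothesis $a>\tfrac{n}{\min(p,\beta)}$, and the theorem is asserted for \emph{all} $\alpha>-\tfrac{n}{p}$.

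To close the gap you cannot simply invoke Lemma~\ref{Maximal-Inq}; you need the finer convolution estimate with the majorants $\eta_{2^{j},m}$ (defined in the paper) together with a ring-by-ring analysis in the Herz norm. On each annulus $R_k$ the weight is essentially the constant $2^{k\alpha}$, so one estimates $\|\eta_{2^{j},ar}\ast|\varphi_{j}\ast f|^{r}\,\chi_{k}\|_{p/r}$ against $\sum_{l}c_{k,l}\,\||\varphi_{j}\ast f|^{r}\chi_{l}\|_{p/r}$ with geometrically decaying coefficients $c_{k,l}$, and then sums in $k$; this produces the Herz-space bound for every $\alpha>-\tfrac{n}{p}$ without invoking global $A_{p}$-type boundedness of $\mathcal M$. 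That is the route taken in \cite{Xu05}.
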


\section{Composition operators}

Let $G:\mathbb{R\rightarrow R}$ be a continuous function. To solve \eqref{int-equa1},
we study the action of the nonlinear function $G$ on Herz-type
Triebel-Lizorkin spaces. Let us recall some results obtained in \cite{Dr21.2}%
, where they proved for Triebel-Lizorkin spaces of power weights, but the
results can be easily expanded to Herz-type Triebel-Lizorkin spaces. Let $%
1<p,q<\infty ,0<\beta \leq \infty ,0\leq \alpha <n-\frac{n}{p}$. Let $T_{G}$
be a composition operator, or Nemytzkij operators, such that%
\begin{equation}
T_{G}(\mathbb{\dot{K}}_{p,q}^{\alpha }\mathbb{F}_{\beta }^{s})\subset 
\mathbb{\dot{K}}_{p,q}^{\alpha }\mathbb{F}_{\beta }^{s},  \label{Condition1}
\end{equation}%
where $\mathbb{\dot{K}}_{p,q}^{\alpha }\mathbb{F}_{\beta }^{s}$ is the
real-valued part of $\dot{K}_{p,q}^{\alpha }F_{\beta }^{s}$. If $s>\frac{n}{p%
}+\alpha $, then $G^{\prime }\in L_{\mathrm{loc}}^{\infty }(\mathbb{R})$ is
necessary. In the the case $0<s\leq \frac{n}{p}+\alpha $, we have\ $%
G^{\prime }\in L^{\infty }(\mathbb{R})$ is necessary.

Now, let $1<p,q<\infty ,0<\beta \leq \infty ,0\leq \alpha <n-\frac{n}{p}$
and $G\in C^{2}(\mathbb{R})$. Let $T_{G}$ be a composition operator with $%
\mathrm{\eqref{Condition1}}$ and%
\begin{equation*}
1+\frac{1}{p}<s<\frac{n}{p}+\alpha .
\end{equation*}%
Then $G(t)=ct$ for some constant $c$.

In this section we investigate sufficient conditions on $G$ such that %
\eqref{Condition1} holds. First we need the following lemma, which is
basically a consequence of Hardy's inequality in the sequence Lebesgue space 
$\ell _{q}$.

\begin{lem}
	\label{lq-inequality}\textit{Let }$0<a<1$\textit{\ and }$0<q\leq \infty $%
	\textit{. Let }$\left\{ \varepsilon _{k}\right\} _{k\in \mathbb{N}_{0}}$\textit{\ be a
		sequences of positive real numbers and denote }$\delta
	_{k}=\sum_{j=0}^{k}a^{k-j}\varepsilon _{j}$ and $\eta
	_{k}=\sum_{j=k}^{\infty }a^{j-k}\varepsilon _{j},k\in \mathbb{N}_{0}$. Then
	there exists a constant $c>0\ $\textit{depending only on }$a$\textit{\ and }$q$
	such that%
	\begin{equation*}
	\Big(\sum\limits_{k=0}^{\infty }\delta _{k}^{q}\Big)^{1/q}+\Big(%
	\sum\limits_{k=0}^{\infty }\eta _{k}^{q}\Big)^{1/q}\leq c\text{ }\Big(%
	\sum\limits_{k=0}^{\infty }\varepsilon _{k}^{q}\Big)^{1/q}.
	\end{equation*}
\end{lem}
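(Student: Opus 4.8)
The plan is to prove the two discrete Hardy-type inequalities separately and symmetrically, since $\delta_k$ involves a backward convolution with the geometric kernel $a^{k-j}$ ($j \le k$) and $\eta_k$ involves a forward convolution with $a^{j-k}$ ($j \ge k$). Both are convolution operators on $\ell_q(\mathbb{N}_0)$ against the kernel $(a^{|m|})_{m\in\mathbb{Z}}$ (restricted to $m\ge 0$ in one case and $m\le 0$ in the other), which lies in $\ell_1$ with $\ell_1$-norm $\sum_{m\ge 0} a^m = \tfrac{1}{1-a}$ because $0<a<1$. So morally this is just Young's convolution inequality, and the constant $c$ depends only on $a$ (through $\tfrac{1}{1-a}$) and on $q$.

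For the case $1 \le q \le \infty$ I would argue by a direct application of Minkowski's inequality. Writing $\delta_k = \sum_{j=0}^k a^{k-j}\varepsilon_j$ and summing over $k$ in $\ell_q$, I shift the index $m = k-j \ge 0$ and get
\begin{equation*}
\Big(\sum_{k=0}^\infty \delta_k^q\Big)^{1/q} = \Big\|\sum_{m=0}^\infty a^m \big(\varepsilon_{k-m}\chi_{\{k\ge m\}}\big)_k\Big\|_{\ell_q} \le \sum_{m=0}^\infty a^m \Big(\sum_{k\ge m}\varepsilon_{k-m}^q\Big)^{1/q} \le \frac{1}{1-a}\Big(\sum_{j=0}^\infty \varepsilon_j^q\Big)^{1/q},
\end{equation*}
and the estimate for $\eta_k$ is identical after the substitution $m = j - k \ge 0$. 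The case $q = \infty$ is handled by the same splitting with the sup replacing the $\ell_q$-sum and is in fact easier.

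For $0 < q < 1$ the triangle inequality in $\ell_q$ fails, so instead I would use the elementary inequality $\big(\sum_i b_i\big)^q \le \sum_i b_i^q$ valid for nonnegative $b_i$ when $0 < q \le 1$. Applying it with $b_j = a^{k-j}\varepsilon_j$ gives $\delta_k^q \le \sum_{j=0}^k a^{q(k-j)}\varepsilon_j^q$, and then summing over $k$ and interchanging the order of summation yields $\sum_k \delta_k^q \le \sum_j \varepsilon_j^q \sum_{k\ge j} a^{q(k-j)} = \tfrac{1}{1-a^q}\sum_j \varepsilon_j^q$; taking $q$-th roots gives the claim with constant $(1-a^q)^{-1/q}$. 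Again the argument for $\eta_k$ is symmetric. The main (and really only) subtlety is that there are genuinely two regimes — $q\ge 1$ via Minkowski/Young and $0<q<1$ via the $q$-subadditivity of $t\mapsto t^q$ — and one must also dispose of the endpoint $q=\infty$; there is no deeper obstacle, the result being a standard and robust fact about geometric-kernel convolutions on sequence spaces.
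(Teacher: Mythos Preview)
Your proof is correct and complete. The paper does not actually supply a proof of this lemma; it merely remarks that the result ``is basically a consequence of Hardy's inequality in the sequence Lebesgue space $\ell_q$,'' which is exactly what you have written out in detail (Minkowski for $q\geq 1$, $q$-subadditivity for $0<q<1$, and the trivial $q=\infty$ case).
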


As usual, we put%
\begin{equation*}
\mathcal{M(}f)(x)=\sup_{Q}\frac{1}{|Q|}\int_{Q}\left\vert f(y)\right\vert
dy,\quad f\in L_{\mathrm{loc}}^{1},
\end{equation*}%
where the supremum\ is taken over all cubes with sides parallel to the axis
and $x\in Q$. Also, we set $\mathcal{M}_{\sigma }(f)=\left( \mathcal{M(}%
\left\vert f\right\vert ^{\sigma })\right) ^{\frac{1}{\sigma }},0<\sigma
<\infty .$

Various important results have been proved in the space $\dot{K}%
_{p,q}^{\alpha }$ under some assumptions on $\alpha ,p$ and $q$. The
conditions $-\frac{n}{p}<\alpha <n(1-\frac{1}{p}),1<p<\infty $ and $0<q\leq
\infty $ is crucial in the study of the boundedness of classical operators
in $\dot{K}_{p,q}^{\alpha }$ spaces. This fact was first realized by Li and
Yang \cite{LiYang96} with the proof of the boundedness of the maximal
function. Some of our results of this paper are based on the following
result, see Tang and Yang \cite{TD00}.

\begin{lem}
	\label{Maximal-Inq}Let $1<\beta <\infty ,1<p<\infty $ and $0<q\leq \infty $.
	If  $\{f_{j}\}_{j\in \mathbb{N}_{0}}$ is a sequence of locally integrable functions
	on $\mathbb{R}^{n}$ and $-\frac{n}{p}<\alpha <n(1-\frac{1}{p})$, then%
	\begin{equation*}
	\Big\|\Big(\sum_{j=0}^{\infty }(\mathcal{M}(f_{j}))^{\beta }\Big)^{1/\beta }%
	\Big\|_{\dot{K}_{p,q}^{\alpha }}\leq c\Big\|\Big(\sum_{j=0}^{\infty
	}|f_{j}|^{\beta }\Big)^{1/\beta }\Big\|_{\dot{K}_{p,q}^{\alpha }}.
	\end{equation*}
\end{lem}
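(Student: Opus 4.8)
The plan is to push the classical Fefferman--Stein argument through the dyadic-annulus structure of $\dot{K}_{p,q}^{\alpha}$. Put $g=\big(\sum_{j}|f_j|^{\beta}\big)^{1/\beta}$ and, for $i\in\Z$, set $a_i=\|g\,\chi_i\|_{p}$, so that the right-hand side equals $\big(\sum_{i\in\Z}2^{i\alpha q}a_i^{q}\big)^{1/q}$. It therefore suffices to estimate $\big\|\big(\sum_j(\mathcal M f_j)^{\beta}\big)^{1/\beta}\chi_k\big\|_{p}$ for each fixed $k\in\Z$ and then take the weighted $\ell^{q}$-norm in $k$. First I would write $f_j=\sum_{i\in\Z}f_j\chi_i$; since $\mathcal M$ is sublinear, $\mathcal M f_j\le\sum_{i}\mathcal M(f_j\chi_i)$ pointwise, and Minkowski's inequality in $\ell^{\beta}$ (legitimate since $\beta\ge1$) gives
\begin{equation*}
\Big\|\Big(\sum_j(\mathcal M f_j)^{\beta}\Big)^{1/\beta}\chi_k\Big\|_{p}\le\sum_{i\in\Z}\Big\|\Big(\sum_j\big(\mathcal M(f_j\chi_i)\big)^{\beta}\Big)^{1/\beta}\chi_k\Big\|_{p}.
\end{equation*}
Then I would split the sum over $i$ into the near-diagonal part $|i-k|\le1$ and the two tails $i\le k-2$ and $i\ge k+2$.

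For $|i-k|\le1$ I would drop the factor $\chi_k$ and apply the classical Fefferman--Stein vector-valued maximal inequality on $L^{p}(\Rn)$, valid since $1<p<\infty$ and $1<\beta<\infty$; this bounds the corresponding term by $c\,\|g\,\chi_i\|_{p}=c\,a_i$. For the tails I would exploit the geometry: if $x\in R_k$ and a cube $Q\ni x$ meets $R_i$, then $\ell(Q)\gtrsim2^{k}$ when $i\le k-2$ and $\ell(Q)\gtrsim2^{i}$ when $i\ge k+2$, whence for $x\in R_k$
\begin{equation*}
\mathcal M(f_j\chi_i)(x)\lesssim 2^{-kn}\int_{R_i}|f_j|\ \ (i\le k-2),\qquad \mathcal M(f_j\chi_i)(x)\lesssim 2^{-in}\int_{R_i}|f_j|\ \ (i\ge k+2).
\end{equation*}
Minkowski's integral inequality in $\ell^{\beta}$ followed by H\"older on $R_i$ turns $\big(\sum_j\big(\int_{R_i}|f_j|\big)^{\beta}\big)^{1/\beta}$ into at most $|R_i|^{1/p'}a_i\approx2^{in/p'}a_i$; these bounds are constant on $R_k$, so multiplying by $|R_k|^{1/p}\approx2^{kn/p}$ gives
\begin{equation*}
\Big\|\Big(\sum_j\big(\mathcal M(f_j\chi_i)\big)^{\beta}\Big)^{1/\beta}\chi_k\Big\|_{p}\lesssim 2^{(i-k)n/p'}a_i\ \ (i\le k-2),\qquad \lesssim2^{(k-i)n/p}a_i\ \ (i\ge k+2).
\end{equation*}

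Collecting the three pieces and writing $b_k=2^{k\alpha}a_k$, I would obtain for every $k\in\Z$
\begin{equation*}
2^{k\alpha}\Big\|\Big(\sum_j(\mathcal M f_j)^{\beta}\Big)^{1/\beta}\chi_k\Big\|_{p}\lesssim\sum_{|i-k|\le1}b_i+\sum_{i\le k-2}2^{(i-k)(n/p'-\alpha)}b_i+\sum_{i\ge k+2}2^{(k-i)(n/p+\alpha)}b_i,
\end{equation*}
and the exponents in the two tail sums are strictly negative precisely because $-\tfrac{n}{p}<\alpha<n\big(1-\tfrac1p\big)$, i.e. $\tfrac{n}{p}+\alpha>0$ and $\tfrac{n}{p'}-\alpha>0$. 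Taking the $\ell^{q}(\Z)$-norm in $k$, the near-diagonal part is $\lesssim\|b\|_{\ell^{q}}$ by the triangle inequality (and an index shift), while the two tail sums are exactly of the form $\delta_k$ and $\eta_k$ of Lemma~\ref{lq-inequality} (stated there for sums over $\N_0$, but the identical Hardy-type argument works over $\Z$), hence each is $\lesssim\|b\|_{\ell^{q}}$. Since $\|b\|_{\ell^{q}(\Z)}=\|g\|_{\dot{K}_{p,q}^{\alpha}}$, this is the claim; the case $q=\infty$ is the same with suprema in place of sums. The only delicate point is the index bookkeeping that renders the two geometric series summable, which is exactly where the range restriction on $\alpha$ is used.
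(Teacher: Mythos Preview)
Your argument is correct. Note, however, that the paper does not give its own proof of this lemma: it simply quotes the result from Tang and Yang \cite{TD00}. What you have written is essentially the standard proof of the vector-valued maximal inequality on Herz spaces as it appears in that reference --- annular decomposition of each $f_j$, Fefferman--Stein on the near-diagonal block, size estimates $\mathcal M(f_j\chi_i)(x)\lesssim 2^{-\max(i,k)n}\int_{R_i}|f_j|$ on the off-diagonal blocks coming from the separation of $R_k$ and $R_i$, and then a discrete Hardy/Young inequality in the $k$-variable, where the range $-\tfrac{n}{p}<\alpha<n(1-\tfrac1p)$ is exactly what makes the two geometric tails summable. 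Your remark that Lemma~\ref{lq-inequality} must be used in its $\ell^{q}(\Z)$ form is the only bookkeeping point worth flagging; the proof of that lemma carries over verbatim to bi-infinite sequences, and in fact for $0<q\le1$ one may bypass it entirely via the $q$-triangle inequality. So there is no discrepancy to report: you have supplied the proof the paper outsources.
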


Let $\mu >0$ and $f\in L_{\mathrm{loc}}^{\max (1,\mu )}$. Define%
\begin{equation*}
I_{k}^{\mu }(f)(x)=\int_{\bar{B}_{-k}}|f(x+z)-f(x)|^{\mu }dz,\quad x\in 
\mathbb{R}^{n},k\in \mathbb{Z}.
\end{equation*}

\begin{lem}
	\label{Key-lemma1}Let $0<p,q<\infty ,0<\beta \leq \infty ,\alpha >-\frac{n}{p%
	}$ and 
	\begin{equation*}
	\max \Big(\sigma _{p,\beta },\frac{n}{p}+\alpha -n\Big)<s<\mu .
	\end{equation*}%
	Then there exists a constant $c>0 $ such that%
	\begin{equation}
	\Big\|\Big(\sum_{k=-\infty }^{\infty }2^{(n+s)k\beta }|I_{k}^{\mu
	}(f)|^{\beta }\Big)^{1/\beta }\Big\|_{\dot{K}_{p,q}^{\alpha }}\leq c\big\|f%
	\big\|_{\dot{K}_{p\mu ,q\mu }^{\frac{\alpha }{\mu }}F_{\beta \mu }^{\frac{s}{%
				\mu }}}^{\mu }  \label{key-estimate}
	\end{equation}%
	holds for all $f\in L_{\mathrm{loc}}^{\max (1,\mu )}$ with 
	\begin{equation*}
	f=\sum_{j=0}^{\infty }\varphi _{j}\ast f,
	\end{equation*}%
	in $L_{\mathrm{loc}}^{\mu }$, \textit{with the obvious modification if }$%
	\beta =\infty .$
\end{lem}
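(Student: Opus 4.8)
The plan is to reduce the estimate \eqref{key-estimate} to the Peetre maximal function characterization (Theorem \ref{Peetre maximal function}) for the space $\dot{K}_{p\mu ,q\mu }^{\alpha /\mu }F_{\beta \mu }^{s/\mu }$ after raising everything to the power $\mu$. First I would write $g=f$ decomposed as $g=\sum_{\ell=0}^\infty \varphi_\ell\ast g$, and for fixed $x$ and $k\in\Z$ estimate $I_k^\mu(f)(x)$ by splitting the difference $f(x+z)-f(x)$ according to frequency: for $z\in\bar B_{-k}$, write $f(x+z)-f(x)=\sum_{\ell=0}^\infty\big((\varphi_\ell\ast f)(x+z)-(\varphi_\ell\ast f)(x)\big)$. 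For the high-frequency part $\ell\ge k$ one uses the crude bound $|(\varphi_\ell\ast f)(x+z)-(\varphi_\ell\ast f)(x)|\le 2\,\varphi_\ell^{*,a}f(x)$ (since $|z|\le 2^{-k}\le 1$ forces $(1+2^\ell|z|)^a\lesssim 1$ only when $\ell\le k$, so actually one wants this for $\ell\ge k$ where $2^\ell|z|$ may be large — here instead bound $(1+2^\ell|x+z-x|)^a$ trivially and pick up a factor $(1+2^{\ell-k})^a\approx 2^{(\ell-k)a}$); for the low-frequency part $\ell<k$ one uses the mean value theorem together with the Bernstein-type inequality $|\nabla(\varphi_\ell\ast f)(y)|\lesssim 2^\ell\,\varphi_\ell^{*,a}f(x)$ valid on $|x-y|\lesssim 2^{-k}\le 2^{-\ell}$, which yields $|(\varphi_\ell\ast f)(x+z)-(\varphi_\ell\ast f)(x)|\lesssim 2^{\ell-k}\,\varphi_\ell^{*,a}f(x)$.

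Next, after integrating $|f(x+z)-f(x)|^\mu$ over $z\in\bar B_{-k}$ (which contributes a factor $\approx 2^{-nk}$) and collecting the two regimes, one arrives pointwise at
\[
2^{(n+s)k}\,\big(I_k^\mu(f)(x)\big)^{1/\mu}\lesssim \sum_{\ell<k}2^{(s/\mu)\,\ell}\,2^{(\ell-k)(1-s/\mu)}\cdot 2^{-(s/\mu)\ell}2^{(n+s)k}2^{-nk/\mu}\,\varphi_\ell^{*,a}f(x)+\sum_{\ell\ge k}(\cdots),
\]
which after bookkeeping reduces to a convolution of the sequence $\{2^{(s/\mu)\ell}\,\varphi_\ell^{*,a}f(x)\}_\ell$ with a geometric kernel $a^{|k-\ell|}$ for some $0<a<1$; here the condition $s>\max(\sigma_{p,\beta},\tfrac{n}{p}+\alpha-n)$ and, crucially, $s<\mu$ (so that $1-s/\mu>0$, giving the correct sign of the exponent in the low-frequency regime) guarantee summability of the kernel. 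Then I apply Lemma \ref{lq-inequality} in the $\ell_{\beta}$-variable (with $a^{|k-\ell|}$ split into the two one-sided pieces $\delta_k,\eta_k$) to obtain, pointwise in $x$,
\[
\Big(\sum_{k}2^{(n+s)k\beta}|I_k^\mu(f)(x)|^\beta\Big)^{1/\mu\cdot\mu}\ \text{controlled by}\ \Big(\sum_{\ell}2^{(s/\mu)\ell\beta\mu}(\varphi_\ell^{*,a}f(x))^{\beta\mu}\Big)^{1/(\beta\mu)\cdot\mu},
\]
that is, $\big(\sum_k 2^{(n+s)k\beta}|I_k^\mu(f)(x)|^\beta\big)^{1/\beta}\lesssim \Big(\big(\sum_\ell 2^{(s/\mu)\ell\beta\mu}(\varphi_\ell^{*,a}f(x))^{\beta\mu}\big)^{1/(\beta\mu)}\Big)^\mu$. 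Finally I take the $\dot{K}_{p,q}^{\alpha}$ quasi-norm of both sides: on the right one recognizes $\big\|\,h^\mu\,\big\|_{\dot{K}_{p,q}^{\alpha}}=\big\|h\big\|_{\dot{K}_{p\mu,q\mu}^{\alpha/\mu}}^\mu$ with $h=\big(\sum_\ell 2^{(s/\mu)\ell\beta\mu}(\varphi_\ell^{*,a}f)^{\beta\mu}\big)^{1/(\beta\mu)}$, and by Theorem \ref{Peetre maximal function} (applicable since $a$ can be chosen $>n/\min(p\mu,\beta\mu)$) this equals a constant times $\big\|f\big\|_{\dot K_{p\mu,q\mu}^{\alpha/\mu}F_{\beta\mu}^{s/\mu}}^\mu$, which is exactly the right-hand side of \eqref{key-estimate}.

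The main obstacle I anticipate is the careful justification of the pointwise frequency-decomposition estimate — in particular controlling the low-frequency increments $(\varphi_\ell\ast f)(x+z)-(\varphi_\ell\ast f)(x)$ for $z\in\bar B_{-k}$ by $2^{\ell-k}\varphi_\ell^{*,a}f(x)$ uniformly, which requires that the gradient of $\varphi_\ell\ast f$ at the relevant points be dominated by the Peetre maximal function centered at $x$ (this uses $|x+z-x|=|z|\le 2^{-k}\le 2^{-\ell}$ so $(1+2^\ell|z|)^a\le 2^a$), and then choosing the parameters so that the exponent $1-s/\mu$ in the resulting geometric series is strictly positive. A secondary technical point is the convergence/regularity issue: $f$ is only assumed in $L^{\max(1,\mu)}_{\mathrm{loc}}$ with $f=\sum_j\varphi_j\ast f$ in $L^\mu_{\mathrm{loc}}$, so the termwise difference identity and the interchange of summation with the integral defining $I_k^\mu$ must be justified via this convergence together with Minkowski's and Hölder's inequalities; everything else is routine once the pointwise bound is in place.
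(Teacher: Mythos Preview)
Your overall architecture is right and matches the paper: split $f=\sum_\ell \varphi_\ell\ast f$, treat the low-frequency increments $\ell<k$ via the mean-value/Bernstein estimate $|\Delta_{z2^{-k}}(\varphi_\ell\ast f)(x)|\lesssim 2^{\ell-k}\varphi_\ell^{*,a}f(x)$, collapse the resulting one-sided geometric convolution with Lemma~\ref{lq-inequality}, and finish with Theorem~\ref{Peetre maximal function} for $\dot K_{p\mu,q\mu}^{\alpha/\mu}F_{\beta\mu}^{s/\mu}$. That part is fine.

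The gap is in your high-frequency estimate. Bounding $|\Delta_{z2^{-k}}(\varphi_\ell\ast f)(x)|\lesssim 2^{(\ell-k)a}\,\varphi_\ell^{*,a}f(x)$ for $\ell\ge k$ produces, after the bookkeeping, the series $\sum_{\ell\ge k}2^{(\ell-k)(a-s/\mu)}\,2^{\ell s/\mu}\varphi_\ell^{*,a}f(x)$, which is geometrically summable only if $a<s/\mu$. But Theorem~\ref{Peetre maximal function} requires $a>n/\min(p\mu,\beta\mu)=\tfrac{n}{\mu\min(p,\beta)}$. The two constraints are compatible only when $s>n/\min(p,\beta)$, which is strictly stronger than the hypothesis $s>\max(\sigma_{p,\beta},\tfrac{n}{p}+\alpha-n)$ (for instance $p=\beta=2$, $n=1$, $\alpha=0$: the lemma allows any $s>0$, while your argument needs $s>\tfrac12$). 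So as written the proof does not cover the stated range.

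The paper closes this gap by \emph{not} relying solely on the Peetre maximal function for $\ell\ge k$. Instead it introduces a parameter $0<\lambda\le 1$ and writes
\[
\int_{\bar B_0}|\Delta_{z2^{-k}}(\varphi_\ell\ast f)(x)|^{\mu}\,dz
\ \lesssim\ \big(2^{(\ell-k)a}\varphi_\ell^{*,a}f(x)\big)^{\mu(1-\lambda)}
\Big(|\varphi_\ell\ast f(x)|^{\mu\lambda}+\mathcal M\big(|\varphi_\ell\ast f|^{\mu\lambda}\big)(x)\Big),
\]
so that only the $(1-\lambda)$-fraction of the power sees the Peetre factor; the remaining $\lambda$-fraction is handled by the Hardy--Littlewood maximal function and the vector-valued Fefferman--Stein inequality in Herz spaces (Lemma~\ref{Maximal-Inq}). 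This relaxes the compatibility condition to $a<\tfrac{s}{\mu(1-\lambda)}$, and by choosing $\lambda$ close to $\min(1,p,\beta,\tfrac{np}{n+\alpha p})$ (the precise choice depends on whether $\min(p,\beta)>1$ and on the sign of $\alpha-n(1-\tfrac1p)$) one recovers exactly the range $s>\max(\sigma_{p,\beta},\tfrac{n}{p}+\alpha-n)$. Your proposal is missing this maximal-function ingredient; once you insert it, the rest of your outline goes through.
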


\begin{proof}
	We will do the proof in two steps.
	
	\textbf{Step 1. }We set $\Delta _{y}f(x)=f(x+y)-f(x),x,y\in \mathbb{R}^{n}$.
	A change of variable yields%
	\begin{equation*}
	2^{(n+s)k}I_{k}^{\mu }(f)(x)=2^{sk}\int_{\bar{B}_{0}}|\Delta
	_{z2^{-k}}f(x)|^{\mu }dz\lesssim J_{1,k}(f)(x)+J_{2,k}(f)(x)
	\end{equation*}%
	for all $x\in \mathbb{R}^{n}$, where the implicit constant is independent of 
	$x$ and $k$, 
	\begin{equation*}
	J_{1,k}(f)(x)=2^{sk}\int_{\bar{B}_{0}}\big|\sum_{j=0}^{k}\Delta
	_{z2^{-k}}(\varphi _{j}\ast f)(x)\big|^{\mu }dz
	\end{equation*}%
	and%
	\begin{equation*}
	J_{2,k}(f)(x)=2^{sk}\int_{\bar{B}_{0}}\big|\sum_{j=k+1}^{\infty }\Delta
	_{z2^{-k}}(\varphi _{j}\ast f)(x)\big|^{\mu }dz.
	\end{equation*}%
	\textit{Estimate of }$J_{1,k}$. Let $\Psi ,\Psi _{0}\in \mathcal{S}\left( 
	\mathbb{R}^{n}\right) $ be two functions such that $\mathcal{F}\Psi =1$ and $%
	\mathcal{F}\Psi _{0}=1$ on supp$\varphi _{1}$ and supp$\psi $, respectively.
	Using the mean value theorem we obtain for any $x\in \mathbb{R}^{n}$, $j\in 
	\mathbb{N}_{0}$ and $|z|\leq 1$%
	\begin{eqnarray*}
		\left\vert \Delta _{z2^{-k}}(\varphi _{j}\ast f)(x) \right\vert
		&=&\left\vert \Delta _{z2^{-k}}(\Psi _{j}\ast \varphi _{j}\ast
		f)(x)\right\vert \\
		&\leq &2^{-k}\sup_{\left\vert x-y\right\vert \leq c\text{ }%
			2^{-k}}\sum\limits_{|\beta |=1}\left\vert D^{\beta }(\Psi _{j}\ast \varphi
		_{j}\ast f)( y) \right\vert ,
	\end{eqnarray*}%
	with some positive constant $c$ independent of $x$, $j$ and $k$, and 
	\begin{equation*}
	\Psi _{j}(\cdot )=2^{(j-1)n}\Psi (2^{j-1}\cdot )\quad \text{for}\quad
	j=1,2,....
	\end{equation*}
	We see that if $|\beta |=1$ and $a>0$%
	\begin{eqnarray}
	&&\left\vert D^{\beta }(\Psi _{j}\ast \varphi _{j}\ast f)( y)
	\right\vert  \notag \\
	&=&2^{\left( j-1\right) n}\left\vert \int_{\mathbb{R}^{n}}D^{\beta }\left(
	\Psi \left( 2^{j-1}\left( y-z\right) \right) \right) \varphi _{j}\ast
	f( z) dz\right\vert  \notag \\
	&\leq &2^{\left( j-1\right) \left( n+1\right) }\int_{\mathbb{R}%
		^{n}}\left\vert \left( D^{\beta }\Psi \right) \left( 2^{j-1}\left(
	y-z\right) \right) \right\vert \left\vert \varphi _{j}\ast f( z)
	\right\vert dz.  \label{estimate-dir}
	\end{eqnarray}%
	The right-hand side in \eqref{estimate-dir} may be estimated as follows: 
	\begin{eqnarray*}
		&&c\text{ }2^{j\left( n+1\right) }\varphi _{j}^{\ast ,a}f( y)
		\int_{\mathbb{R}^{n}}\left\vert \left( D^{\beta }\Psi \right) \left(
		2^{j-1}\left( y-z\right) \right) \right\vert \left( 1+2^{j}\left\vert
		y-z\right\vert \right) ^{a}dz \\
		&\leq &c\text{ }2^{j}\varphi _{j}^{\ast ,a}f( y) .
	\end{eqnarray*}%
	Then we obtain for any $x\in \mathbb{R}^{n}$, $|z|\leq 1$ and any $j,k\in 
	\mathbb{N}_{0}$%
	\begin{eqnarray*}
		\left\vert \Delta _{z2^{-k}}(\varphi _{j}\ast f)( x) \right\vert
		&\leq &c\text{ }2^{j-k}\sup_{\left\vert x-y\right\vert \leq c\text{ }%
			2^{-k}}\varphi _{j}^{\ast ,a}f( y) \\
		&\leq &c\text{ }2^{j-k}\left( 1+2^{j-k}\right) ^{a}\sup_{\left\vert
			x-y\right\vert \leq c\text{ }2^{-k}}\frac{\varphi _{j}^{\ast ,a}f(
			y) }{\left( 1+2^{j}\left\vert x-y\right\vert \right) ^{a}} \\
		&\leq &c\text{ }2^{j-k}\varphi _{j}^{\ast ,a}f( x) ,
	\end{eqnarray*}%
	if $0\leq j\leq k,k\in \mathbb{N}_{0}$ and $x\in \mathbb{R}^{n}$. Therefore%
	\begin{equation*}
	J_{1,k}(f)(x)\lesssim 2^{sk}\big(\sum_{j=0}^{k}2^{j-k}\varphi _{j}^{\ast
		,a}f\left( x\right) \big)^{\mu },
	\end{equation*}%
	where the implicit constant is independent of $x$ and $k$, and this yields that%
	\begin{equation*}
	\Big\|\Big(\sum_{k=0}^{\infty }|J_{1,k}(f)|^{\beta }\Big)^{1/\beta }\Big\|_{%
		\dot{K}_{p,q}^{\alpha }}
	\end{equation*}%
	can be estimated by%
	\begin{equation*}
	c\Big\|\Big(\sum_{k=0}^{\infty }\big(\sum_{j=0}^{k}2^{(j-k)(1-\frac{s}{\mu }%
		)}2^{j\frac{s}{\mu }}\varphi _{j}^{\ast ,a}f\big)^{\mu \beta }\Big)^{1/\mu
		\beta }\Big\|_{\dot{K}_{p\mu ,q\mu }^{\frac{\alpha }{\mu }}}^{\mu }.
	\end{equation*}%
	Using Lemma \ref{lq-inequality} the last expression is bounded by%
	\begin{equation*}
	c\Big\|\Big(\sum_{k=0}^{\infty }\big(2^{k\frac{s}{\mu }}\varphi _{k}^{\ast
		,a}f\big)^{\mu \beta }\Big)^{1/\mu \beta }\Big\|_{\dot{K}_{p\mu ,q\mu }^{%
			\frac{\alpha }{\mu }}}^{\mu }\lesssim \big\|f\big\|_{\dot{K}_{p\mu ,q\mu }^{%
			\frac{\alpha }{\mu }}F_{\beta \mu }^\frac{s}{\mu } }^{\mu },
	\end{equation*}%
	where we have used Theorem \ref{Peetre maximal function}.
	
	\textit{Estimate of }$J_{2,k}$. We can distinguish two cases as follows:
	
	\noindent $\bullet $ \textit{Case 1. }$\min (p,\beta )>1$. Therefore $s>\max %
	\big(0,\frac{n}{p}+\alpha -n\big)$. Assume that $\alpha \geq n(1-\frac{1}{p}%
	) $. Let $1-\frac{s\min (p,\beta )}{n}<\lambda <\min(\frac{np}{n+\alpha p},\beta)$ be a
	strict positive real number,	which is possible because of 
	\begin{equation*}
	s>\frac{n}{p}+\alpha -n>\frac{np(\frac{n}{p}+\alpha -n)}{\min (p,\beta
		)(n+\alpha p)}=\frac{n}{\min (p,\beta )}\big(1-\frac{np}{n+\alpha p}\big).
	\end{equation*}%
	Let $\frac{n}{\mu \min (p,\beta )}<a<\frac{s}{\mu (1-\lambda )}$. Then 
	\begin{equation}
	\frac{s}{\mu }>a(1-\lambda ).  \label{choices-s}
	\end{equation}%
	If $-\frac{n}{p}<\alpha <n(1-\frac{1}{p})$, then we take $\lambda =1$. From
	this we deduce that for all $x\in \mathbb{R}^{n}$, $2^{-sk}J_{2,k}(f)(x)$ can be
	estimated by%
	\begin{eqnarray*}
		&&c\sum_{j=k+1}^{\infty }2^{(j-k)\varepsilon }\int_{\bar{B}_{0}}\big|%
		\Delta _{z2^{-k}}(\varphi _{j}\ast f)(x)\big|^{\mu }dz \\
		&\lesssim &\sum_{j=k+1}^{\infty }2^{(j-k)\varepsilon }\sup_{x\in \bar{B%
			}_{0}}\big|\Delta _{z2^{-k}}(\varphi _{j}\ast f)(x)\big|^{\mu (1-\lambda
			)}\int_{\bar{B}_{0}}\big|\Delta _{z2^{-k}}(\varphi _{j}\ast f)(x)\big|^{\mu
			\lambda }dz
	\end{eqnarray*}%
	where $0<\frac{2\varepsilon }{\mu }\leq \frac{s}{\mu }-a(1-\lambda )$ and the
	positive constant $c$ is independent of $k$ and $x$. Observe that%
	\begin{eqnarray*}
		&&\int_{\bar{B}_{0}}\big|\Delta _{z2^{-k}}(\varphi _{j}\ast f)(x)\big|^{\mu
			\lambda }dz \\
		&\lesssim &\big|\varphi _{j}\ast f(x)\big|^{\mu \lambda
		}+2^{kn}\int_{|y-x|\leq 2^{-k}}\big|\varphi _{j}\ast f(y)\big|^{\mu \lambda
		}dy \\
		&\lesssim &\big|\varphi _{j}\ast f(x)\big|^{\mu \lambda }+\mathcal{M}\big(%
		\big|\varphi _{j}\ast f\big|^{\mu \lambda }\big)(x).
	\end{eqnarray*}%
	This estimate combined with
	
	\begin{equation}
	\left\vert \Delta _{z2^{-k}}(\varphi _{j}\ast f)\left( x\right) \right\vert
	\leq c\text{ }2^{(j-k)a}\varphi _{j}^{\ast ,a}f\left( x\right)
	\label{Peetre}
	\end{equation}%
	for any $x\in \mathbb{R}^{n},|z|\leq 1$ and any $j\geq k+1$, yield%
	\begin{equation*}
	J_{2,k}(f)\lesssim J_{2,k,1}(f)+J_{2,k,2}(f),
	\end{equation*}%
	where 
	\begin{equation*}
	J_{2,k,1}(f)=\sum_{j=k+1}^{\infty }2^{(j-k)(\varepsilon +a\mu (1-\lambda
		)-s)}\big(2^{j\frac{s}{\mu }}\varphi _{j}^{\ast ,a}f\big)^{\mu (1-\lambda )}%
	\big|2^{j\frac{s}{\mu }}\varphi _{j}\ast f\big|^{\mu \lambda }
	\end{equation*}%
	and%
	\begin{equation*}
	J_{2,k,2}(f)=\sum_{j=k+1}^{\infty }2^{(j-k)(\varepsilon +a\mu (1-\lambda
		)-s)}\big(2^{j\frac{s}{\mu }}\varphi _{j}^{\ast ,a}f\big)^{\mu (1-\lambda )}%
	\mathcal{M}\big(2^{j\frac{s}{\mu }}|\varphi _{j}\ast f|\big)^{\mu \lambda }.
	\end{equation*}%
	By similarity we estimate only $J_{2,k,2}(f)$. Using Lemma \ref%
	{lq-inequality} and H\"{o}lder's inequality we get%
	\begin{eqnarray*}
		&&\Big(\sum_{k=0}^{\infty }(J_{2,k,2}(f))^{\beta }\Big)^{1/\beta } \\
		&\lesssim &\Big(\sum_{k=0}^{\infty }\big(2^{k\frac{s}{\mu }}\varphi
		_{k}^{\ast ,a}f\big)^{\mu (1-\lambda )\beta }\big(\mathcal{M}\big(2^{k\frac{s%
			}{\mu }}|\varphi _{k}\ast f|\big)^{\mu \lambda }\big)^{\beta }\Big)^{1/\beta
		} \\
		&\lesssim &\Big(\sum_{k=0}^{\infty }\big(2^{k\frac{s}{\mu }}\varphi
		_{k}^{\ast ,a}f\big)^{\mu \beta }\Big)^{(1-\lambda )/\beta }\Big(%
		\sum_{k=0}^{\infty }\big(\mathcal{M}\big(2^{k\frac{s}{\mu }}|\varphi
		_{k}\ast f|\big)^{\mu \lambda }\big)^{\beta /\lambda }\Big)^{\lambda /\beta
		}.
	\end{eqnarray*}%
	Again by H\"{o}lder's inequality%
	\begin{equation*}
	\Big\|\Big(\sum_{k=0}^{\infty }(J_{2,k,2}(f))^{\beta }\Big)^{1/\beta }\Big\|%
	_{\dot{K}_{p,q}^{\alpha }},
	\end{equation*}%
	can be estimated by%
	\begin{eqnarray*}
		&&c\Big\|\Big(\sum_{k=0}^{\infty }\big(2^{k\frac{s}{\mu }}\varphi _{k}^{\ast
			,a}f\big)^{\mu \beta }\Big)^{1/\beta }\Big\|_{\dot{K}_{p,q}^{\alpha
		}}^{1-\lambda } \\
		&&\times \Big\|\Big(\sum_{k=0}^{\infty }\big(\mathcal{M}\big(2^{k\frac{s}{%
				\mu }}|\varphi _{k}\ast f|\big)^{\mu \lambda }\big)^{\beta /\lambda }\Big)%
		^{\lambda /\beta }\Big\|_{\dot{K}_{\frac{p}{\lambda },\frac{q}{\lambda }%
			}^{\alpha \lambda }} \\
		&\lesssim &\big\|f\big\|_{\dot{K}_{p\mu ,q\mu }^{\frac{\alpha }{\mu }%
			}F_{\beta \mu }^{\frac{s}{\mu }}}^{(1-\lambda )\mu }\Big\|\Big(%
		\sum_{k=0}^{\infty }\big(2^{k\frac{s}{\mu }}|\varphi _{k}\ast f|\big)^{\mu
			\beta }\Big)^{1/\mu \beta }\Big\|_{\dot{K}_{p\mu ,q\mu }^{\frac{\alpha }{\mu 
		}}}^{\lambda \mu }
	\end{eqnarray*}%
	where we have used Theorem \ref{Peetre maximal function} and Lemma \ref%
	{Maximal-Inq}. Obviously we can estimate the last term by%
	\begin{equation*}
	c\big\|f\big\|_{\dot{K}_{p\mu ,q\mu }^{\frac{\alpha }{\mu }}F_{\beta \mu }^{%
			\frac{s}{\mu }}}^{\mu }.
	\end{equation*}%
	$\bullet $\textit{\ Case 2. }$\min (p,\beta )\leq 1$. If $-\frac{n}{p}%
	<\alpha <n(1-\frac{1}{p})$, then $s>\frac{n}{\min (p,\beta )}-n$. Taking $\max(0,1-%
	\frac{s\min (p,\beta )}{n})<\lambda <\min (1,p,\beta )$. The same arguments as
	in Case 1 yield the desired estimate. Now assume that $\alpha \geq n(1-\frac{%
		1}{p})$. Therefore 
	\begin{equation*}
	s>\max \Big(\frac{n}{\min (p,\beta )}-n,\frac{n}{p}+\alpha -n\Big).
	\end{equation*}%
	Taking $\max(0,1-\frac{s\min (p,\beta )}{n})<\lambda <\min (p,\frac{np}{n+\alpha p}%
	,\beta )$. The desired estimate can be done in the same manner as in Case 1.
	
	\textbf{Step 2.} We will estimate%
	\begin{equation*}
	\Big\|\Big(\sum_{k=-\infty }^{-1}2^{(n+s)k\beta }|I_{k}^{\mu }(f)|^{\beta }%
	\Big)^{1/\beta }\Big\|_{\dot{K}_{p,q}^{\alpha }}.
	\end{equation*}%
	We employ the same notations as in Step 1. Recall that%
	\begin{equation*}
	f=\sum_{j=0}^{\infty }\varphi _{j}\ast f.
	\end{equation*}%
	Define%
	\begin{equation*}
	M_{k,2}(f)(x)=\int_{\bar{B}_{0}}\big|\sum_{j=0}^{\infty }\Delta
	_{z2^{-k}}(\varphi _{j}\ast f)(x)\big|^{\mu }dz.
	\end{equation*}%
	As in the estimation of $J_{2,k}$, we obtain 
	\begin{equation*}
	M_{2,k}(f)\lesssim M_{2,k,1}(f)+M_{2,k,2}(f),
	\end{equation*}%
	where 
	\begin{equation*}
	M_{2,k,1}(f)=2^{-ka\mu (1-\lambda )}\sum_{j=0}^{\infty }2^{j(\varepsilon
		+a\mu (1-\lambda )-s)}\big(2^{j\frac{s}{\mu }}\varphi _{j}^{\ast ,a}f\big)%
	^{\mu (1-\lambda )}\big|2^{j\frac{s}{\mu }}\varphi _{j}\ast f\big|^{\mu
		\lambda }
	\end{equation*}%
	and%
	\begin{equation*}
	M_{2,k,2}(f)=2^{-ka\mu (1-\lambda )}\sum_{j=0}^{\infty }2^{j(\varepsilon
		+a\mu (1-\lambda )-s)}\big(2^{j\frac{s}{\mu }}\varphi _{j}^{\ast ,a}f\big)%
	^{\mu (1-\lambda )}\mathcal{M}\big(2^{j\frac{s}{\mu }}|\varphi _{j}\ast f|%
	\big)^{\mu \lambda },
	\end{equation*}%
	with the help of $\mathrm{\eqref{Peetre}}$\textrm{.} By similarity we
	estimate only $M_{2,k,2}$. Obviously%
	\begin{equation*}
	M_{2,k,2}(f)\lesssim 2^{-ka\mu (1-\lambda )}\sup_{j\in\mathbb{N}_{0}}\Big(\big(2^{j%
		\frac{s}{\mu }}\varphi _{j}^{\ast ,a}f\big)^{\mu (1-\lambda )}\mathcal{M}%
	\big(2^{j\frac{s}{\mu }}|\varphi _{j}\ast f|\big)^{\mu \lambda }\Big)
	\end{equation*}%
	and this yields that%
	\begin{equation*}
	\Big(\sum_{k=-\infty }^{-1}2^{sk\beta }|M_{2,k,2}|^{\beta }\Big)^{1/\beta
	}\lesssim \sup_{j\in\mathbb{N}_{0}}\Big(\big(2^{j\frac{s}{\mu }}\varphi _{j}^{\ast ,a}f%
	\big)^{\mu (1-\lambda )}\mathcal{M}\big(2^{j\frac{s}{\mu }}|\varphi _{j}\ast
	f|\big)^{\mu \lambda }\Big).
	\end{equation*}%
	By the same arguments as used in Step 1 we obtain the desired estimate. The
	proof is complete.
\end{proof}

Now we present the case of $s=\mu $, where the proof is very similar to
Lemma \ref{Key-lemma1}.

\begin{lem}
	\label{Key-lemma2}Let $0<p,q<\infty ,\alpha >-\frac{n}{p}$ and 
	\begin{equation*}
	\max \Big(\sigma _{p},\frac{n}{p}+\alpha -n\Big)<\mu .
	\end{equation*}%
	Then there exists a positive  constant $c$ such that%
	\begin{equation*}
	\Big\|\sup_{k\in \mathbb{Z}}2^{(n+\mu )k}|I_{k}^{\mu }(f)|\Big\|_{\dot{K}%
		_{p,q}^{\alpha }}\leq c\big\|f\big\|_{\dot{K}_{p\mu ,q\mu }^{\frac{\alpha }{%
				\mu }}F_{1}^{1}}^{\mu }
	\end{equation*}%
	holds for all $f\in L_{\mathrm{loc}}^{\max (1,\mu )}$ with 
	\begin{equation*}
	f=\sum_{j=0}^{\infty }\varphi _{j}\ast f,
	\end{equation*}%
	in $L_{\mathrm{loc}}^{\mu }.$
\end{lem}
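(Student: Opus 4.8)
The plan is to run the two–step scheme of the proof of Lemma~\ref{Key-lemma1}, specialised to $s=\mu$ and to the value $\beta=\infty$ on the left–hand side; this is exactly why a supremum over $k$ appears and why $\sigma_p$ (rather than $\sigma_{p,\beta}$) enters the hypothesis. One writes $\sup_{k\in\mathbb{Z}}=\max(\sup_{k\ge 0},\sup_{k<0})$ and treats the two pieces separately. As in Lemma~\ref{Key-lemma1}, the change of variables $z\mapsto 2^{-k}z$ turns $2^{(n+\mu)k}I_k^\mu(f)(x)$ into $2^{\mu k}\int_{\bar{B}_0}|\Delta_{z2^{-k}}f(x)|^\mu\,dz$, which for $k\ge 0$ is dominated by $J_{1,k}(f)(x)+J_{2,k}(f)(x)$; the splitting is legitimate because $f=\sum_{j\ge 0}\varphi_j\ast f$ in $L_{\mathrm{loc}}^\mu$.

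The term $J_{1,k}$ carries the one genuinely new point. The mean value theorem together with the Peetre–maximal bound obtained inside the proof of Lemma~\ref{Key-lemma1} gives $J_{1,k}(f)(x)\lesssim 2^{\mu k}\big(\sum_{j=0}^{k}2^{j-k}\varphi_j^{\ast,a}f(x)\big)^\mu=\big(\sum_{j=0}^{k}2^{j}\varphi_j^{\ast,a}f(x)\big)^\mu$, so, the partial sums being nondecreasing in $k$, $\sup_{k\ge 0}J_{1,k}(f)(x)\lesssim\big(\sum_{j=0}^{\infty}2^{j}\varphi_j^{\ast,a}f(x)\big)^\mu$. This is precisely where $s=\mu$ changes the mechanism: in Lemma~\ref{Key-lemma1} the factor $2^{(j-k)(1-s/\mu)}$ is summable over $k$ and Lemma~\ref{lq-inequality} preserves the $\ell^\beta$–sum in $k$, whereas at the endpoint $s=\mu$ that factor equals $1$, so the $k$–summation on the left must collapse to a supremum while the frequency sum becomes an $\ell^1$–sum — which is the origin of the space $F_1^1$ on the right. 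Using the homogeneity $\big\||g|^\mu\big\|_{\dot{K}_{p,q}^\alpha}=\big\|g\big\|_{\dot{K}_{p\mu,q\mu}^{\alpha/\mu}}^\mu$ and then Theorem~\ref{Peetre maximal function} for the space $\dot{K}_{p\mu,q\mu}^{\frac{\alpha}{\mu}}F_1^1$ (smoothness $1$, fine index $1$, and $a$ taken above the threshold $\frac{n}{\min(p\mu,1)}$ of that theorem, which is admissible since $\alpha>-\frac{n}{p}$), one gets $\big\|\sup_{k\ge 0}J_{1,k}(f)\big\|_{\dot{K}_{p,q}^\alpha}\lesssim\big\|f\big\|_{\dot{K}_{p\mu,q\mu}^{\frac{\alpha}{\mu}}F_1^1}^\mu$; no appeal to Hardy's inequality is needed at this step.

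For $J_{2,k}$ with $k\ge 0$, and for the range $k<0$ — where one works with $M_{2,k}(f)(x)=\int_{\bar{B}_0}\big|\sum_{j\ge 0}\Delta_{z2^{-k}}(\varphi_j\ast f)(x)\big|^\mu\,dz$ exactly as in Step~2 of Lemma~\ref{Key-lemma1} — the argument transcribes that proof line by line, with $\min(p,\beta)$ replaced by $p$ and $s$ by $\mu$: from the pointwise bound \eqref{Peetre} and the estimate $\int_{\bar{B}_0}|\Delta_{z2^{-k}}(\varphi_j\ast f)(x)|^{\mu\lambda}\,dz\lesssim |\varphi_j\ast f(x)|^{\mu\lambda}+\mathcal{M}(|\varphi_j\ast f|^{\mu\lambda})(x)$ one is led to a sum over $j>k$ whose dilation factor carries the exponent $\varepsilon+a\mu(1-\lambda)-\mu$, and the two choices of $\lambda$ (and, separately from $J_{1,k}$, of $a$, with $\frac{n}{\mu p}<a<\frac{1}{1-\lambda}$) from Cases~1 and~2 of Lemma~\ref{Key-lemma1}, now read at $s=\mu$, make this exponent negative. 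This is the point where $\mu>\max(\sigma_p,\frac{n}{p}+\alpha-n)$ is used, via the same computation $\mu>\frac{n}{p}\big(1-\frac{np}{n+\alpha p}\big)$ as in Lemma~\ref{Key-lemma1}; verifying that such $\lambda$ and $a$ still exist at the endpoint (and, when $\alpha\ge n(1-\frac1p)$, that one may pick the $a$ for $J_{2,k}$ independently of the $a$ used for $J_{1,k}$) is the one place needing a little care. Summing the geometric tail and taking the supremum over $k$, then using H\"{o}lder's inequality on the Herz quasi–norm (distributing the exponents $\mu(1-\lambda)$ and $\mu\lambda$ over $\dot{K}_{p/(1-\lambda),q/(1-\lambda)}^{(1-\lambda)\alpha}$ and $\dot{K}_{p/\lambda,q/\lambda}^{\lambda\alpha}$ when $\lambda<1$, trivially when $\lambda=1$), the boundedness of $\mathcal{M}$ on $\dot{K}_{p,q}^\alpha$ in the range $-\frac{n}{p}<\alpha<n(1-\frac1p)$ together with Lemma~\ref{Maximal-Inq}, and Theorem~\ref{Peetre maximal function} again, these contributions are bounded by $\big\|f\big\|_{\dot{K}_{p\mu,q\mu}^{\frac{\alpha}{\mu}}F_\infty^1}^\mu\lesssim\big\|f\big\|_{\dot{K}_{p\mu,q\mu}^{\frac{\alpha}{\mu}}F_1^1}^\mu$, the last step being the trivial embedding $\dot{K}_{p\mu,q\mu}^{\frac{\alpha}{\mu}}F_1^1\hookrightarrow\dot{K}_{p\mu,q\mu}^{\frac{\alpha}{\mu}}F_\infty^1$. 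Combining the bounds for $\sup_{k\ge 0}$ and $\sup_{k<0}$ via the quasi–triangle inequality yields the claim.

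The main obstacle is conceptual rather than computational: one has to recognise that at the borderline $s=\mu$ the summation in $k$ on the left is forced into a supremum while the frequency summation on the right is forced into $\ell^1$ (whence the space $F_1^1$), and then check that the parameter inequalities governing the $J_{2,k}$–estimate remain simultaneously solvable at this endpoint under the stated restriction on $\mu$. Everything else is a faithful copy of the corresponding steps in the proof of Lemma~\ref{Key-lemma1}.
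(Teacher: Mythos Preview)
Your proposal is correct and follows exactly the route the paper intends: the paper itself gives no proof beyond ``very similar to Lemma~\ref{Key-lemma1}'', and you have supplied precisely the adaptation that is needed, namely that at the endpoint $s=\mu$ the factor $2^{(j-k)(1-s/\mu)}$ degenerates to $1$, so the $\ell^\beta$--sum over $k$ collapses to a supremum while the frequency sum in $J_{1,k}$ becomes an $\ell^1$--sum, forcing $F_1^1$ on the right. One small point worth making explicit: when you reach $\sup_j \mathcal{M}(2^j|\varphi_j\ast f|)$ in the $J_{2,k}$ estimate, Lemma~\ref{Maximal-Inq} does not cover $\beta=\infty$ directly, but the monotonicity $\mathcal{M}(g_j)\le \mathcal{M}(\sup_l g_l)$ reduces this to the scalar maximal inequality on $\dot{K}_{p/\lambda,q/\lambda}^{\alpha\lambda}$, which is available under your choice of $\lambda$.
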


Using the fact that $\big\|f\big\|_{\dot{K}_{p\mu ,q\mu }^{\frac{\alpha }{%
			\mu }}F_{\beta \mu }^{\frac{s}{\mu }}}^{\mu }\leq \big\|f\big\|_{\dot{K}%
	_{p,q}^{\alpha }F_{\beta }^{s}}\big\|f\big\|_{\infty }^{\mu -1}$, we we
immediately arrive at the following results.

\begin{lem}
	\label{Key-lemma1 copy(1)}Let $0<p,q<\infty ,0<\beta \leq \infty ,\alpha >-%
	\frac{n}{p}$ and 
	\begin{equation*}
	\max \Big(1,\sigma _{p,\beta },\frac{n}{p}+\alpha -n\Big)<s<\mu .
	\end{equation*}%
	Then there exists a positive  constant $c$ such that%
	\begin{equation*}
	\Big\|\Big(\sum_{k=-\infty }^{\infty }2^{(n+s)k\beta }|I_{k}^{\mu
	}(f)|^{\beta }\Big)^{1/\beta }\Big\|_{\dot{K}_{p,q}^{\alpha }}\leq c\big\|f%
	\big\|_{\dot{K}_{p,q}^{\alpha }F_{\beta }^{s}}\big\|f\big\|_{\infty }^{\mu
		-1}
	\end{equation*}%
	holds for all $f\in \dot{K}_{p,q}^{\alpha }F_{\beta }^{s}\cap L^{\infty }$.
\end{lem}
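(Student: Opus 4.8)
The statement is a direct corollary of Lemma~\ref{Key-lemma1} together with the interpolation-type inequality
\begin{equation*}
\big\|f\big\|_{\dot{K}_{p\mu ,q\mu }^{\frac{\alpha }{\mu }}F_{\beta \mu }^{\frac{s}{\mu }}}^{\mu }\leq \big\|f\big\|_{\dot{K}_{p,q}^{\alpha }F_{\beta }^{s}}\big\|f\big\|_{\infty }^{\mu -1}
\end{equation*}
quoted in the paragraph preceding the lemma. So the proof is essentially two lines: first verify that the hypotheses of Lemma~\ref{Key-lemma1} are met, then chain the two estimates.

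\textbf{Step 1.} Given $f\in \dot{K}_{p,q}^{\alpha }F_{\beta }^{s}\cap L^{\infty }$, observe that the condition $\max(1,\sigma_{p,\beta},\tfrac{n}{p}+\alpha-n)<s<\mu$ assumed here implies in particular $\max(\sigma_{p,\beta},\tfrac{n}{p}+\alpha-n)<s<\mu$, which is exactly the range required in Lemma~\ref{Key-lemma1}. Moreover, since $f\in \dot{K}_{p,q}^{\alpha }F_{\beta }^{s}$ with $s>\tfrac{n}{p}+\alpha-n$ and $s>\sigma_p$ (these follow from $s>\max(\sigma_{p,\beta},\tfrac{n}{p}+\alpha-n)$), Theorem~\ref{regular-distribution1 copy(1)} gives $f\in L^{1}_{\mathrm{loc}}$, and together with $f\in L^\infty$ this places $f$ in $L^{\max(1,\mu)}_{\mathrm{loc}}$ and guarantees the Littlewood--Paley convergence $f=\sum_{j=0}^\infty \varphi_j\ast f$ in $L^\mu_{\mathrm{loc}}$, so $f$ is admissible in Lemma~\ref{Key-lemma1}.

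\textbf{Step 2.} Apply Lemma~\ref{Key-lemma1} to obtain
\begin{equation*}
\Big\|\Big(\sum_{k=-\infty }^{\infty }2^{(n+s)k\beta }|I_{k}^{\mu}(f)|^{\beta }\Big)^{1/\beta }\Big\|_{\dot{K}_{p,q}^{\alpha }}\leq c\,\big\|f\big\|_{\dot{K}_{p\mu ,q\mu }^{\frac{\alpha }{\mu }}F_{\beta \mu }^{\frac{s}{\mu}}}^{\mu },
\end{equation*}
and then bound the right-hand side by $c\,\big\|f\big\|_{\dot{K}_{p,q}^{\alpha }F_{\beta }^{s}}\big\|f\big\|_{\infty }^{\mu -1}$ using the stated inequality. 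This is the claimed estimate, and the proof is complete. There is no real obstacle here; the only points requiring a word of care are checking that the narrower $s$-range in the present statement still satisfies the hypotheses of Lemma~\ref{Key-lemma1} and noting that $f\in L^\infty$ (rather than merely $f\in L^{\max(1,\mu)}_{\mathrm{loc}}$) is used precisely to make the factor $\|f\|_\infty^{\mu-1}$ finite.
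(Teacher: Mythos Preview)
Your proposal is correct and follows exactly the paper's approach: the paper states just before the lemma that it follows immediately from Lemma~\ref{Key-lemma1} together with the inequality $\big\|f\big\|_{\dot{K}_{p\mu ,q\mu }^{\alpha/\mu}F_{\beta \mu }^{s/\mu}}^{\mu }\leq \big\|f\big\|_{\dot{K}_{p,q}^{\alpha }F_{\beta }^{s}}\big\|f\big\|_{\infty }^{\mu -1}$, which is precisely your two-step argument. Your Step~1, checking admissibility of $f$ for Lemma~\ref{Key-lemma1}, even adds a bit of rigor that the paper leaves implicit.
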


\begin{rem}
	Corresponding statements to Lemmas \ref{Key-lemma1}, \ref{Key-lemma2}\ and %
	\ref{Key-lemma1 copy(1)} were proved by Runst \cite[Lemma 1]{Ru86}, with $%
	\alpha =0,p=q$ and the case of bounded functions, while with $\alpha =0,p=q$
	has been given by Sickel in \cite[Lemmas 1,2]{Si89} . In our proof we have
	used the ideas of \cite[Lemmas 1, 2]{Si89}.
\end{rem}

The next two lemmas are used in the proof of our results, see e.g. \cite%
{BouDr21}.

\begin{lem}
	\label{Key-lemma4}Let\textit{\ }$s\in \mathbb{R},A,B>0,0<p,q<\infty ,0<\beta
	\leq \infty $\textit{\ }and\ $\alpha >-\frac{n}{p}$. Let $\left\{
	f_{l}\right\} _{l\in \mathbb{N}_{0}}$ be a sequence of functions\ such that 
	\begin{equation*}
	\mathrm{supp}\mathcal{F}f_{0}\subseteq \left\{ \xi \in \mathbb{R}%
	^{n}:\left\vert \xi \right\vert \leq A\right\}
	\end{equation*}%
	and%
	\begin{equation*}
	\mathrm{supp}\mathcal{F}f_{l}\subseteq \left\{ \xi \in \mathbb{R}%
	^{n}:B2^{l+1}\leq \left\vert \xi \right\vert \leq A2^{l+1}\right\} .
	\end{equation*}%
	There exists a constant $c>0$ such that the following inequality%
	\begin{equation*}
	\Big\Vert\sum_{l=0}^{\infty }\,f_{l}\Big\Vert_{\dot{K}_{p,q}^{\alpha
		}F_{\beta }^{s}}\leq c\Big\Vert\Big(\sum_{l=0}^{\infty }2^{ls\beta
	}\left\vert f_{l}\right\vert ^{\beta }\Big)^{1/\beta }\Big\Vert_{\dot{K}%
		_{p,q}^{\alpha }}
	\end{equation*}%
	holds.
\end{lem}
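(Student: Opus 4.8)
The plan is to combine a Fourier–support (almost orthogonality) argument with the vector‑valued maximal inequality in Herz spaces (Lemma~\ref{Maximal-Inq}), after a rescaling of the exponents. First I would localize in frequency. From the construction of $\{\mathcal F\varphi_j\}$ one has $\operatorname{supp}\mathcal F\varphi_0\subseteq\{|\xi|\le 3/2\}$ and $\operatorname{supp}\mathcal F\varphi_j\subseteq\{2^{j-1}\le|\xi|\le 3\cdot 2^{j-1}\}$ for $j\ge 1$. Comparing these annuli with the supports of $\mathcal Ff_l$ in the hypothesis, the product $\mathcal F\varphi_j\cdot\mathcal Ff_l=\mathcal F(\varphi_j\ast f_l)$ vanishes identically unless $|j-l|\le N$ for a fixed integer $N=N(A,B)$ (the endpoint indices $j=0$ and $l=0$ being checked by inspection). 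Consequently, writing $f=\sum_{l\ge0}f_l$ (convergence in $\mathcal S'$ being understood, and in fact following a posteriori from the estimate applied to tails), for each $j$ one has $\varphi_j\ast f=\sum_{l:\,|l-j|\le N}\varphi_j\ast f_l$, a sum of at most $2N+1$ terms.

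Next I would fix a small auxiliary exponent $\sigma$ with $0<\sigma<\min\big(p,\beta,\tfrac{np}{n+\max(\alpha,0)p}\big)$ — admissible since $\alpha>-n/p$ forces $n+\alpha p>0$ — and derive a pointwise bound for the band‑limited pieces. When $|j-l|\le N$ the function $f_l$ has Fourier support in a ball of radius $\lesssim 2^j$, so by the standard growth estimate for band‑limited functions underlying Theorem~\ref{Peetre maximal function} (cf.\ \cite{Xu05}), $|f_l(z)|\le c\,\mathcal M_\sigma(f_l)(x)(1+2^j|x-z|)^{n/\sigma}$ for all $x,z$; combining this with the rapid decay of the Schwartz kernel $\varphi_j$ gives $|\varphi_j\ast f_l(x)|\le c\,\mathcal M_\sigma(f_l)(x)$ with $c$ independent of $j,l,x$. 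Since $2^{js}\approx 2^{ls}$ for $|j-l|\le N$, summing the $2N+1$ terms and taking $\ell^{\beta}$–norms in $j$ (each index $l$ being counted boundedly often, and using $\big(\sum_i a_i\big)^{\min(1,\beta)}\le\sum_i a_i^{\min(1,\beta)}$) yields
\[
\Big(\sum_{j\ge0}2^{js\beta}|\varphi_j\ast f|^{\beta}\Big)^{1/\beta}\le c\,\Big(\sum_{l\ge0}\mathcal M_\sigma\big(2^{ls}f_l\big)^{\beta}\Big)^{1/\beta},
\]
with the usual modification for $\beta=\infty$, and hence $\big\|f\big\|_{\dot K_{p,q}^{\alpha}F_{\beta}^{s}}$ is dominated by the $\dot K_{p,q}^{\alpha}$–norm of the right‑hand side.

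Finally I would remove the maximal operators by rescaling. Writing $g_l=2^{ls}f_l$ and $\mathcal M_\sigma(g_l)^{\beta}=(\mathcal M(|g_l|^{\sigma}))^{\beta/\sigma}$, and using the elementary identity $\|F^{1/\sigma}\|_{\dot K_{p,q}^{\alpha}}=\|F\|_{\dot K_{p/\sigma,\,q/\sigma}^{\sigma\alpha}}^{1/\sigma}$ (immediate from the definition of the Herz norm), matters reduce to bounding $\big\|\big(\sum_l(\mathcal M(|g_l|^{\sigma}))^{\beta/\sigma}\big)^{\sigma/\beta}\big\|_{\dot K_{p/\sigma,\,q/\sigma}^{\sigma\alpha}}$. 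By the choice of $\sigma$ one has $1<p/\sigma<\infty$, $1<\beta/\sigma<\infty$ (when $\beta<\infty$) and $-n/(p/\sigma)<\sigma\alpha<n(1-\sigma/p)$, so Lemma~\ref{Maximal-Inq} applied with the exponents $(p/\sigma,q/\sigma,\beta/\sigma,\sigma\alpha)$ to the functions $|g_l|^{\sigma}$ bounds this quantity by $c\,\big\|\big(\sum_l|g_l|^{\beta}\big)^{1/\beta}\big\|_{\dot K_{p,q}^{\alpha}}^{\sigma}=c\,\big\|\big(\sum_l 2^{ls\beta}|f_l|^{\beta}\big)^{1/\beta}\big\|_{\dot K_{p,q}^{\alpha}}^{\sigma}$; raising to the power $1/\sigma$ and combining with the previous step finishes the case $\beta<\infty$. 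For $\beta=\infty$ one has $\sup_j 2^{js}|\varphi_j\ast f|\le c\sup_l\mathcal M_\sigma(2^{ls}f_l)\le c\,\mathcal M_\sigma\big(\sup_l 2^{ls}|f_l|\big)$, and the conclusion follows from the scalar boundedness of $\mathcal M$ on $\dot K_{p/\sigma,\,q/\sigma}^{\sigma\alpha}$ (again Lemma~\ref{Maximal-Inq}) together with the same rescaling identity. (A self‑contained treatment is also in \cite{BouDr21}.)

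I expect the delicate point to be the interplay between the last two steps: $\sigma$ must be chosen small enough that (i) the band‑limited pointwise estimate survives with $\mathcal M_\sigma$ in place of $\mathcal M$ (needed because no upper bound on $p$ or $\beta$ is assumed), and (ii) after rescaling, the shifted parameter $\sigma\alpha$ lands inside the admissible window $(-n/(p/\sigma),\,n(1-\sigma/p))$ of Lemma~\ref{Maximal-Inq}. Requirement (ii) is exactly why the statement needs no upper bound on $\alpha$: the upper endpoint $n(1-\sigma/p)=n-n\sigma/p$ tends to $\infty$ as $\sigma\to0$, so any $\alpha>-n/p$ can be accommodated. By contrast, the spectral bookkeeping in the first step (including the endpoint indices) and the $\ell^{\beta}$ triangle‑inequality manipulations are routine.
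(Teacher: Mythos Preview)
Your proposal is correct and follows the standard route for this type of lemma: almost orthogonality from the annular Fourier supports, a pointwise Peetre--type bound $|\varphi_j\ast f_l|\lesssim\mathcal M_\sigma(f_l)$, and then the rescaling $\sigma$--trick to put the exponents into the admissible range of the vector-valued maximal inequality on Herz spaces (Lemma~\ref{Maximal-Inq}). The paper does not give its own proof here but simply refers to \cite{BouDr21}; your argument is precisely the expected one, and you have handled the delicate choice of $\sigma$ (simultaneously accommodating $\min(p,\beta)\le1$ and the full range $\alpha>-n/p$) correctly.
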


\begin{lem}
	\label{Key-lemma5}Let\textit{\ }$A,B>0,0<p,q<\infty ,0<\beta \leq \infty $%
	\textit{\ }and\ $\alpha >-\frac{n}{p}$. Let $s>\max (\sigma _{p},\frac{n}{p}%
	+\alpha -n)$. Let $\left\{ f_{l}\right\} _{l\in \mathbb{N}_{0}}$ be a
	sequence of functions\ such that 
	\begin{equation*}
	\mathrm{supp}\mathcal{F}f_{l}\subseteq \left\{ \xi \in \mathbb{R}%
	^{n}:\left\vert \xi \right\vert \leq A2^{l+1}\right\} .
	\end{equation*}%
	\textit{T}hen it holds that 
	\begin{equation*}
	\Big\Vert\sum_{l=0}^{\infty }\,f_{l}\Big\Vert_{\dot{K}_{p,q}^{\alpha
		}F_{\beta }^{s}}\leq c\Big\Vert\Big(\sum_{l=0}^{\infty }2^{ls\beta
	}\left\vert f_{l}\right\vert ^{\beta }\Big)^{1/\beta }\Big\Vert_{\dot{K}%
		_{p,q}^{\alpha }}.
	\end{equation*}
\end{lem}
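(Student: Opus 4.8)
The plan is to reduce the statement to Lemma \ref{Key-lemma4} by a Littlewood--Paley regrouping and then to absorb the error with the maximal inequality of Lemma \ref{Maximal-Inq}. Since $\mathrm{supp}\,\mathcal{F}f_l\subseteq\{|\xi|\leq A2^{l+1}\}$, the pieces $\varphi_i\ast f_l$ vanish for $i>l+c_A$, where $c_A$ depends only on $A$; hence $f_l=\sum_{i=0}^{l+c_A}\varphi_i\ast f_l$ and, after interchanging the order of summation,
\[
\sum_{l=0}^\infty f_l=\sum_{i=0}^\infty g_i,\qquad g_i:=\sum_{l\geq i-c_A}\varphi_i\ast f_l .
\]
Because $\mathrm{supp}\,\mathcal{F}g_i\subseteq\mathrm{supp}\,\mathcal{F}\varphi_i$, the family $\{g_i\}$ satisfies the Fourier-support hypotheses of Lemma \ref{Key-lemma4} (a ball for $i=0$, annuli of width $\approx 2^i$ for $i\geq1$), so that
\[
\Big\|\sum_l f_l\Big\|_{\dot{K}_{p,q}^\alpha F_\beta^s}\leq c\,\Big\|\Big(\sum_i 2^{is\beta}|g_i|^\beta\Big)^{1/\beta}\Big\|_{\dot{K}_{p,q}^\alpha}.
\]

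Next I would estimate $g_i$ pointwise, splitting off the finitely many terms with $i-c_A\leq l<i$ from the tail $l\geq i$. For $l<i$ the cancellation of $\varphi_i$ together with the Bernstein inequality for the band-limited $f_l$ gives, for any prescribed $N$, $|\varphi_i\ast f_l(x)|\lesssim 2^{(l-i)N}\mathcal{M}_r(f_l)(x)$; for $l\geq i$ the Peetre-type maximal estimate attached to $\mathrm{supp}\,\mathcal{F}f_l$ gives $|\varphi_i\ast f_l(x)|\lesssim 2^{(l-i)n/r}\mathcal{M}_r(f_l)(x)$, where $\mathcal{M}_r(f)=(\mathcal{M}(|f|^r))^{1/r}$ with $0<r<\min(p,\beta)$. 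When $p,\beta>1$ and $-\tfrac np<\alpha<n(1-\tfrac1p)$ one may instead take $r=1$, i.e. use $|\varphi_i\ast f_l(x)|\lesssim\mathcal{M}(f_l)(x)$ with no growth factor, which is the most economical route. Writing $2^{is}=2^{(i-l)s}2^{ls}$ and inserting these bounds makes the $i$-series geometric, and Lemma \ref{lq-inequality} collapses the $\ell_\beta$-sum over $i$, leaving
\[
\Big\|\Big(\sum_i 2^{is\beta}|g_i|^\beta\Big)^{1/\beta}\Big\|_{\dot{K}_{p,q}^\alpha}\lesssim\Big\|\Big(\sum_l 2^{ls\beta}\mathcal{M}_r(f_l)^\beta\Big)^{1/\beta}\Big\|_{\dot{K}_{p,q}^\alpha}.
\]

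To finish, I would remove the maximal operators. Using the homogeneity of the Herz quasi-norm under $f\mapsto|f|^r$, the last quantity equals $\big\|\big(\sum_l(\mathcal{M}(2^{lsr}|f_l|^r))^{\beta/r}\big)^{r/\beta}\big\|_{\dot{K}_{p/r,q/r}^{\alpha r}}^{1/r}$, to which Lemma \ref{Maximal-Inq} (with parameters $\beta/r,\,p/r,\,q/r,\,\alpha r$) applies and yields the bound $\big\|(\sum_l 2^{ls\beta}|f_l|^\beta)^{1/\beta}\big\|_{\dot{K}_{p,q}^\alpha}$, as required. The use of Lemma \ref{Maximal-Inq} requires $1<\beta/r$, $1<p/r$ and $-\tfrac{nr}{p}<\alpha r<n(1-\tfrac rp)$, that is $r<\min\big(p,\beta,\tfrac{np}{n+\alpha p}\big)$; together with the lower bound on $r$ forced in the previous step, such an $r$ exists exactly under the hypothesis $s>\max(\sigma_p,\tfrac np+\alpha-n)$. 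The case $\beta=\infty$ is identical, with $\ell_\infty$ and the $\esssup$-form of Lemma \ref{Maximal-Inq}.

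The delicate point is the tail $l\geq i$ in the second step: there the factor $2^{(l-i)n/r}$ grows, and balancing it against the weight gain $2^{(i-l)s}$, the cancellation of $\varphi_i$, and the admissible window $r<\min(p,\beta,\tfrac{np}{n+\alpha p})$ is precisely what pins the sharp threshold $s>\max(\sigma_p,\tfrac np+\alpha-n)$. Carrying this out cleanly calls for a case distinction according to whether $p$ and $\beta$ exceed $1$ and whether $\alpha$ lies in the Muckenhoupt range $\big(-\tfrac np,\,n(1-\tfrac1p)\big)$, the $\mathcal{A}_p$-range being the one where the argument with $\mathcal{M}$ in place of $\mathcal{M}_r$ works verbatim.
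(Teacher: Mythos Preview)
Your overall architecture---regroup $\sum_l f_l=\sum_i g_i$ with $g_i=\sum_{l\ge i-c_A}\varphi_i\ast f_l$, then control $(2^{is}g_i)_i$ in $\ell^\beta$ by a geometric sum and strip the maximal functions via Lemma~\ref{Maximal-Inq}---is the right one, and for $p,\beta>1$ with $-\frac np<\alpha<n(1-\frac1p)$ your argument with $\mathcal M$ (no growth factor) and the requirement $s>0$ is fine.

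The gap is in the tail estimate for the remaining range. Your Peetre bound $|\varphi_i\ast f_l(x)|\lesssim 2^{(l-i)n/r}\mathcal M_r(f_l)(x)$ is correct but too crude: balancing $2^{(i-l)s}$ against $2^{(l-i)n/r}$ forces $s>n/r$, and since Lemma~\ref{Maximal-Inq} needs $r<\min\big(p,\beta,\tfrac{np}{n+\alpha p}\big)$ you end up with $s>\max\big(\tfrac np,\tfrac n\beta,\tfrac np+\alpha\big)$. This is stronger than the stated hypothesis $s>\max\big(\sigma_p,\tfrac np+\alpha-n\big)$ by exactly $n$; for instance with $p\le 1$ your argument requires $s>\tfrac np$ whereas the lemma allows $s>\tfrac np-n$. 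So the sentence ``is precisely what pins the sharp threshold'' is not justified.

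What is missing is a sharper convolution estimate that exploits the \emph{averaging} in $\varphi_i$, not only the Peetre pointwise control of $f_l$. For $0<r\le 1$ and $l\ge i$, the Plancherel--Polya sub-mean-value property of the band-limited $f_l$ at scale $2^{-l}$, combined with the fact that a ball of radius $2^{-i}$ is covered by $\approx 2^{(l-i)n}$ cubes of side $2^{-l}$, yields
\[
|\varphi_i\ast f_l(x)|\ \lesssim\ 2^{(l-i)n(\frac1r-1)}\,\big(\eta_{2^i,N}\ast|f_l|^r(x)\big)^{1/r}\ \lesssim\ 2^{(l-i)n(\frac1r-1)}\,\mathcal M_r(f_l)(x),
\]
so the growth exponent is $n(\tfrac1r-1)$ rather than $\tfrac nr$. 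With this the geometric sum needs only $s>n(\tfrac1r-1)$, and optimizing over $r<\min\big(p,\beta,\tfrac{np}{n+\alpha p}\big)$ gives exactly $s>\max\big(\sigma_{p,\beta},\tfrac np+\alpha-n\big)$, which is the sharp threshold (note the paper writes $\sigma_p$, but the argument naturally produces $\sigma_{p,\beta}$). The paper itself does not prove the lemma but defers to \cite{BouDr21}; the refinement above is the standard way such results are obtained.
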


Let $G:\mathbb{R}\rightarrow $ $\mathbb{R}$ be a continuous function. We
shall deal with sufficient conditions on $G$ to guarantee an embedding 
\begin{equation*}
T_{G}(\dot{K}_{p,q}^{\alpha }F_{\beta }^{s})=G(\dot{K}_{p,q}^{\alpha
}F_{\beta }^{s})\subset \dot{K}_{p,q}^{\alpha }F_{\beta }^{s}.
\end{equation*}%
First we begin with the case where $G$ is polynomial.

\begin{thm}
	\label{Multiplication1}Let $0<p,q<\infty ,0<\beta \leq \infty $, $s\geq 
	\frac{n}{p}-\frac{n}{q},\alpha \geq 0$ and 
	\begin{equation}
	\max \Big(0,\frac{n}{p}+\alpha -\frac{n}{m}\Big)<s<\frac{n}{p}+\alpha ,\quad
	m=2,3,....  \label{hypothesis}
	\end{equation}%
	We put%
	\begin{equation*}
	s_{m}=s-(m-1)\big(\frac{n}{p}+\alpha -s\big).
	\end{equation*}%
	Then%
	\begin{equation}
	\big\|f^{m}\big\|_{\dot{K}_{p,q}^{\alpha }F_{\beta }^{s_{m}}}\lesssim \big\|f%
	\big\|_{\dot{K}_{p,q}^{\alpha }F_{\beta }^{s}}^{m}  \label{Key-product}
	\end{equation}%
	holds for all $f\in \dot{K}_{p,q}^{\alpha }F_{\beta }^{s}.$
\end{thm}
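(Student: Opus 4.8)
The plan is to reduce the estimate \eqref{Key-product} to an iterated application of a single bilinear (product) estimate, and then to establish that bilinear estimate by a Littlewood--Paley paraproduct decomposition combined with the vector-valued maximal inequality in Herz spaces (Lemma \ref{Maximal-Inq}) and the summation lemmas \ref{Key-lemma4} and \ref{Key-lemma5}. The key claim to isolate first is: under the hypotheses on $p,q,\beta,s,\alpha$ in \eqref{hypothesis}, for any two admissible exponents $t_1,t_2$ with $t_1+t_2 = s + \frac{n}{p}+\alpha$ (and each $t_i$ in the allowed range), one has
\begin{equation*}
\big\|fg\big\|_{\dot{K}_{p,q}^{\alpha }F_{\beta }^{t_1+t_2-(\frac{n}{p}+\alpha)}}\lesssim \big\|f\big\|_{\dot{K}_{p,q}^{\alpha }F_{\beta }^{t_1}}\big\|g\big\|_{\dot{K}_{p,q}^{\alpha }F_{\beta }^{t_2}}.
\end{equation*}
Granting this, $\big\|f^m\big\|_{\dot{K}_{p,q}^{\alpha }F_{\beta }^{s_m}}$ follows by writing $f^m = f\cdot f^{m-1}$ and inducting: each multiplication lowers the smoothness index by exactly $\frac{n}{p}+\alpha - s > 0$, so after $m-1$ steps one lands at $s_m = s-(m-1)(\frac{n}{p}+\alpha-s)$, while the norms multiply. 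One must check at each stage that the intermediate index $s - (k-1)(\frac{n}{p}+\alpha-s)$ still lies in the range where the bilinear estimate applies; the lower constraint $\max(0,\frac{n}{p}+\alpha-\frac{n}{m})<s$ in \eqref{hypothesis} is precisely calibrated so that $s_m > \max(\ldots)$ and all intermediate indices are legitimate.

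For the bilinear estimate itself I would decompose, with $f = \sum_j \varphi_j\ast f$ and $g = \sum_k \varphi_k\ast g$, the product into the three paraproduct pieces $\Pi_1(f,g) = \sum_{j} (\varphi_j\ast f)(S_{j-2}g)$, $\Pi_2(f,g) = \sum_k (S_{k-2}f)(\varphi_k\ast g)$, and the diagonal term $\Pi_3(f,g) = \sum_{|j-k|\le 1}(\varphi_j\ast f)(\varphi_k\ast g)$, where $S_m h = \sum_{l\le m}\varphi_l\ast h$. For $\Pi_1$ the frequency support of the $j$-th summand is an annulus of size $\approx 2^j$, so Lemma \ref{Key-lemma4} applies and it remains to bound $\big\|(\sum_j 2^{j(t_1+t_2-(\frac{n}{p}+\alpha))\beta}|\varphi_j\ast f|^\beta |S_{j-2}g|^\beta)^{1/\beta}\big\|_{\dot{K}_{p,q}^\alpha}$; one pulls out $\|S_{j-2}g\|_\infty \lesssim 2^{-j(t_2 - (\frac{n}{p}+\alpha))}\cdot(\text{something summable})$ using the embedding $\dot{K}_{p,q}^\alpha F_\beta^{t_2}\hookrightarrow L^\infty$ valid when $t_2 > \frac{n}{p}+\alpha$, or more precisely a localized version of it giving $\|S_{j-2}g\|_\infty\lesssim 2^{j(\frac{n}{p}+\alpha-t_2)}\|g\|_{\dot{K}_{p,q}^\alpha F_\beta^{t_2}}$; this exactly converts the $F_\beta^{t_1}$-norm of $f$ into the claimed output index. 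Here is where the Herz-specific subtlety enters: the $L^\infty$ bound for a frequency-localized piece of a function in $\dot{K}_{p,q}^\alpha F_\beta^{t_2}$ must be obtained via Theorem \ref{embeddings3} or a direct Bernstein/Nikolskii-type argument in Herz spaces, not from the classical Sobolev embedding, and the hypothesis $s\ge \frac{n}{p}-\frac{n}{q}$ together with $\alpha\ge 0$ is what makes such an embedding available. The term $\Pi_2$ is symmetric. For the diagonal term $\Pi_3$ the summands have frequency support in a ball of size $\approx 2^j$ (not an annulus), so one invokes Lemma \ref{Key-lemma5} instead, which requires the output index $t_1+t_2-(\frac{n}{p}+\alpha) = s_m$ to exceed $\max(\sigma_p, \frac{n}{p}+\alpha-n)$ — again guaranteed by \eqref{hypothesis} — and then estimates as before.

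I expect the main obstacle to be the $L^\infty$-type estimate for frequency-truncated pieces $S_{j}g$ in the Herz setting with the sharp exponent $2^{j(\frac{n}{p}+\alpha-t_2)}$: unlike the classical Triebel--Lizorkin case, the relevant embedding $\dot{K}_{p,q}^\alpha F_\beta^{t}\hookrightarrow L^\infty$ for $t>\frac{n}{p}+\alpha$ is not standard folklore and has to be carefully derived (or quoted from \cite{Drihem2.13}), paying attention to the constraints $\alpha\ge 0$ and $s\ge\frac{n}{p}-\frac{n}{q}$; the borderline nature of \eqref{hypothesis} means there is no room to waste a derivative. A secondary technical point is verifying that all intermediate smoothness indices in the induction remain strictly above the thresholds demanded by Lemmas \ref{Maximal-Inq}, \ref{Key-lemma4} and \ref{Key-lemma5}, which amounts to an elementary but careful bookkeeping using the precise form of the lower bound on $s$ in \eqref{hypothesis}. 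Once the bilinear estimate is in hand with explicit constants, the passage to $f^m$ is a routine induction and the proof concludes.
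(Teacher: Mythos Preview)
Your iterated--bilinear strategy is a natural alternative to the paper's direct $m$-fold paraproduct, but the treatment of the diagonal piece $\Pi_3$ contains a genuine gap. You assert that Lemma~\ref{Key-lemma5} applies in the target space because ``the output index $t_1+t_2-(\frac{n}{p}+\alpha)=s_m$ exceeds $\max(\sigma_p,\frac{n}{p}+\alpha-n)$ --- again guaranteed by \eqref{hypothesis}''. This is false: the hypothesis \eqref{hypothesis} only yields $s_m>\frac{n}{p}+\alpha-n$, not $s_m>\sigma_p$. For a concrete failure take $n=4$, $p=q=2$, $\alpha=0$, $m=3$, $s=1$; then \eqref{hypothesis} reads $\tfrac{2}{3}<s<2$ (satisfied), yet $s_m=s_3=1-2\cdot 1=-1<0=\sigma_p$. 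So you cannot invoke Lemma~\ref{Key-lemma5} in $\dot K_{p,q}^{\alpha}$ at the final inductive step, and the same obstruction hits intermediate steps whenever $s_{k+1}\le 0$.

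The paper circumvents this by \emph{not} iterating. It decomposes $f^m$ once into the $m$-linear paraproduct pieces $\Pi_{1,k}$ and $\Pi_2$, and for the diagonal block $\Pi_2$ it applies Lemma~\ref{Key-lemma5} not in $\dot K_{p,q}^{\alpha}$ but in an auxiliary space $\dot K_{u,\,q/|I_1|}^{\alpha m}$ (obtained by H\"older's inequality across all $m$ factors simultaneously) with smoothness index $|I_1|s\ge 2s>0$; there the threshold $\max\bigl(0,\tfrac{n}{u}+\alpha m-n\bigr)$ is met precisely because of \eqref{hypothesis}, and one returns to $\dot K_{p,q}^{\alpha}F_\beta^{s_m}$ by the Sobolev embedding of Theorem~\ref{embeddings3}. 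Likewise, for the off-diagonal pieces the paper does not use your $L^\infty$ Bernstein bound on $S_{j}g$, but rather H\"older in the Herz scale, placing $\sup_j|Q_{j-N}f|$ in $\dot K_{b,\infty}^{\alpha}$ with $\tfrac1b=\tfrac1p-\tfrac{s}{n}$ and controlling it via $\dot K_{p,q}^{\alpha}F_\beta^{s}\hookrightarrow \dot K_{b,b}^{\alpha}F_2^{0}$ together with Bui's weighted maximal estimate \eqref{Hardy1}. The point is that this $m$-fold H\"older/Sobolev maneuver needs simultaneous access to all $m$ factors; once you collapse $f^{k}$ into a single element of $\dot K_{p,q}^{\alpha}F_\beta^{s_k}$ you can no longer split its integrability, which is why the inductive route, as written, does not close.
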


\begin{proof}
	We will do the proof into three steps.
	
	\textbf{Step 1.}\textit{\ Preparation.} Let $\{\mathcal{F}\varphi
	_{j}\}_{j\in \mathbb{N}_{0}}$ be a partition of unity and $f\in \mathcal{S}%
	^{\prime }(\mathbb{R}^{n})$. We define the convolution operators $\Delta
	_{j} $\ by the following: 
	\begin{equation*}
	\Delta _{j}f=\varphi _{j}\ast f,\quad j\in \mathbb{N}\quad \text{and}\quad
	\Delta _{0}f=\varphi _{0}\ast f=\mathcal{F}^{-1}\psi \ast f.
	\end{equation*}%
	We define the convolution operators $Q_{j}$, $j\in \mathbb{N}_{0}$ by the
	following: 
	\begin{equation*}
	Q_{j}f=\mathcal{F}^{-1}\psi _{j}\ast f,\quad j\in \mathbb{N}_{0},
	\end{equation*}%
	where $\mathcal{F}^{-1}\psi _{j}=2^{jn}\mathcal{F}^{-1}\psi (2^{j}\cdot )$
	and we see that 
	\begin{equation*}
	Q_{j}f=\sum_{k=0}^{j}\Delta _{k}f,\quad j\in \mathbb{N}_{0}.
	\end{equation*}%
	For all $f_{i}\in \mathcal{S}^{\prime }(\mathbb{R}^{n})$, $i=1,2,...,m$ the
	product $\prod_{i=1}^{m}f_{i}$ is defined by 
	\begin{equation*}
	\prod_{i=1}^{m}f_{i}=\lim_{j\rightarrow \infty }\prod_{i=1}^{m}Q_{j}f_{i},
	\end{equation*}%
	if the limit on the right-hand side exists in $\mathcal{S}^{\prime }(\mathbb{%
		R}^{n})$. The following decomposition of this product is given in \cite[%
	Chapter 4]{RS96}. We have the following formal decomposition:%
	\begin{equation*}
	\prod_{i=1}^{m}f_{i}=\sum_{k_{1},...,k_{m}=0}^{\infty }\prod_{i=1}^{m}\left(
	\Delta _{k_{i}}f_{i}\right) .
	\end{equation*}%
	The fundamental idea is to split $\prod_{i=1}^{m}f_{i}$ into two parts, both
	of them being always defined. Let $N$ be a natural number greater than $%
	1+\log _{2}3\left( m-1\right) $. Then we have the following decomposition:%
	\begin{eqnarray*}
		\prod_{i=1}^{m}f_{i} &=&\sum_{j=0}^{\infty }\left[ Q_{j-N}f_{1}\cdot
		...\cdot Q_{j-N}f_{m-1}\cdot \Delta _{j}f_{m}+...\right. \\
		&&\left. +\left( \Pi _{l\neq k}Q_{j-N}f_{l}\right) \Delta
		_{k}f_{j}+...+\Delta _{j}f_{1}\cdot Q_{j-N}f_{2}\cdot ....\cdot Q_{j-N}f_{m} 
		\right] \\
		&&+\sum_{j=0}^{\infty }\sum^{j}\left( \Delta _{k_{1}}f_{1}\right) \cdot
		....\cdot \left( \Delta _{k_{m}}f_{m}\right) ,
	\end{eqnarray*}%
	where the $\sum^{j}$ is taken over all $k\in \mathbb{Z}_{+}^{n}$ such that 
	\begin{equation*}
	\max_{\ell =1,...,m}k_{1}=k_{k_{m_{0}}}=j\quad \text{and}\quad \max_{\ell
		\neq m_{0}}\left\vert \ell -k_{\ell }\right\vert <N.
	\end{equation*}%
	Of course, if $k<0$ we put $\Delta _{k}f=0$. Probably $\sum^{j}$ becomes
	more transparent by restricting to a typical part, which can be taken to be 
	\begin{equation*}
	\Big(\prod_{i\in I_{1}}\Delta _{j}f_{i}\Big)\prod_{i\in I_{2}}Q_{_{j}}f_{i},
	\end{equation*}%
	where 
	\begin{equation*}
	I_{1},I_{2}\subset \left\{ 1,...,m\right\} ,\quad I_{1}\cap I_{2}=\emptyset
	,\quad I_{1}\cup I_{2}=\left\{ 1,...,m\right\} =I,\quad \left\vert
	I_{1}\right\vert \geq 2.
	\end{equation*}%
	We introduce the following notations%
	\begin{equation*}
	\Pi _{1,k}(f_{1},f_{2},...,f_{m})=\sum_{j=N}^{\infty }\Big(\prod_{i\neq
		k}Q_{_{j-N}}f_{i}\Big)\Delta _{j}f_{k}
	\end{equation*}%
	and%
	\begin{equation*}
	\Pi _{2}(f_{1},f_{2},...,f_{m})=\sum_{j=0}^{\infty }\sum^{j}\Big(%
	\prod_{i=1}^{m}\Delta _{k_{i}}f_{i}\Big).
	\end{equation*}%
	The advantage of the above decomposition is based on%
	\begin{equation*}
	\text{supp }\mathcal{F}\Big(\Big(\prod_{i\neq k}Q_{_{j-N}}f_{i}\Big)\Delta
	_{j}f_{k}\Big)\subset \big\{\xi \in \mathbb{R}^{n}:2^{j-1}\leq \left\vert
	\xi \right\vert \leq 2^{j+1}\big\},\quad j\geq N
	\end{equation*}%
	and%
	\begin{equation*}
	\text{supp }\mathcal{F}\Big(\sum^{j}\Big(\prod_{i=1}^{m}\Delta _{k_{i}}f_{i}%
	\Big)\Big)\subset \big\{\xi \in \mathbb{R}^{n}:\left\vert \xi \right\vert
	\leq 2^{j+N-2}\big\},\quad {j\in\mathbb{N}_{0}}.
	\end{equation*}
	
	\textbf{Step 2.} We will prove \eqref{Key-product}. Observe that we need
	only to estimate%
	\begin{equation*}
	\Pi _{1}(f,f,...,f)=\sum_{j=N}^{\infty }(Q_{_{j-N}}f)^{m-1}\Delta _{j}f
	\end{equation*}%
	and%
	\begin{equation*}
	\Pi _{2}(f,f,...,f)=\sum_{j=0}^{\infty }(\Delta
	_{j}f)^{|I_{1}|}(Q_{_{j}}f)^{|I_{2}|}.
	\end{equation*}%
	Define%
	\begin{equation*}
	\frac{1}{v}=\frac{1}{p}+(m-1)\Big(\frac{1}{p}-\frac{s}{n}\Big).
	\end{equation*}%
	Therefore we have the following Sobolev embeddings%
	\begin{equation*}
	\dot{K}_{v,q}^{\alpha m}F_{\beta }^{s}\hookrightarrow \dot{K}_{p,q}^{\alpha
	}F_{\beta }^{s_{m}}.
	\end{equation*}%
	Lemma \ref{Key-lemma4} gives%
	\begin{equation*}
	\big\|\Pi _{1}(f,f,...,f)\big\|_{\dot{K}_{v,q}^{\alpha m}F_{\beta }^{s}}
	\end{equation*}%
	can be estimated by%
	\begin{eqnarray*}
		&&c\Big\|\Big(\sum_{j=N}^{\infty }|2^{js}(Q_{_{j-N}}f)^{m-1}\Delta
		_{j}f|^{\beta }\Big)^{\frac{1}{\beta }}\Big\|_{\dot{K}_{v,q}^{\alpha m}} \\
		&\lesssim &\Big\|(\sup_{j\geq N}|Q_{_{j-N}}f|)^{m-1}\Big(\sum_{j=N}^{\infty
		}|2^{js}\Delta _{j}f|^{\beta }\Big)^{\frac{1}{\beta }}\Big\|_{\dot{K}%
			_{v,q}^{\alpha m}}.
	\end{eqnarray*}%
	By H\"{o}lder's inequality we estimate the last term by%
	\begin{equation*}
	c\big\|\sup_{j\geq N}|Q_{_{j-N}}f|\big\|_{\dot{K}_{b,\infty }^{\alpha
	}}^{m-1}\big\|f\big\|_{\dot{K}_{p,q}^{\alpha }F_{\beta }^{s}},
	\end{equation*}%
	with $\frac{1}{b}=\frac{1}{p}-\frac{s}{n}$. Recall that 
	\begin{eqnarray}
	\big\Vert\sup_{j\geq N}|Q_{_{j-N}}f|\big\Vert_{\dot{K}_{b,\infty }^{\alpha
	}} &\lesssim &\big\Vert\sup_{j\geq N}|Q_{_{j-N}}f|\big\Vert_{\dot{K}%
		_{b,b}^{\alpha }}  \notag \\
	&\lesssim &\big\Vert f\big\Vert_{F_{b,2}^{0}(\mathbb{R}^{n},|\cdot |^{\alpha
			b})}  \notag \\
	&\lesssim &\big\Vert f\big\Vert_{\dot{K}_{b,b}^{\alpha }F_{2}^{0}},
	\label{Hardy1}
	\end{eqnarray}%
	see \cite[Theorem 1.4]{Bui82}, because of $-\frac{n}{b}<\alpha <n(1-\frac{1}{%
		b})$. Since, $s\geq \frac{n}{p}-\frac{n}{q}$, thanks to the embedding%
	\begin{equation}
	\dot{K}_{p,q}^{\alpha }F_{\beta }^{s}\hookrightarrow \dot{K}_{b,b}^{\alpha
	}F_{2}^{0},  \label{embedding1}
	\end{equation}%
	see Theorem \ref{embeddings3}, we obtain 
	\begin{equation*}
	\big\|\Pi _{1}(f,f,...,f)\big\|_{\dot{K}_{v,q}^{\alpha m}F_{\beta
		}^{s}}\lesssim \big\|f\big\|_{\dot{K}_{p,q}^{\alpha }F_{\beta }^{s}}^{m}.
	\end{equation*}%
	Now we estimate $\Pi _{2}(f,f,...,f)$. Define%
	\begin{equation*}
	\frac{1}{u}=\frac{|I_{1}|}{p}+\frac{|I_{2}|}{b},\quad \sigma -\frac{n}{u}%
	-\alpha m=s_{m}-\frac{n}{p}-\alpha .
	\end{equation*}%
	Observe that $\sigma =|I_{1}|s$. Hence%
	\begin{equation*}
	\dot{K}_{u,\frac{q}{|I_{1}|}}^{\alpha m}F_{\frac{\beta }{|I_{1}|}%
	}^{|I_{1}|s}\hookrightarrow \dot{K}_{p,q}^{\alpha }F_{\beta }^{s_{m}}.
	\end{equation*}%
	From \eqref{hypothesis} it follows that $\sigma >\max \Big(0,\frac{n}{u}%
	+\alpha m-n\Big)$. Lemma \ref{Key-lemma5} gives%
	\begin{eqnarray*}
		&&\big\|\Pi _{2}(f,f,...,f)\big\|_{\dot{K}_{u,\frac{q}{|I_{1}|}}^{\alpha
				m}F_{\frac{\beta }{|I_{1}|}}^{|I_{1}|s}} \\
		&\lesssim &\Big\|\Big(\sum_{j=0}^{\infty
		}|2^{j|I_{1}|s}(Q_{j}f)^{|I_{2}|}(\Delta _{j}f)^{|I_{1}|}|^{\frac{\beta }{%
				|I_{1}|}}\Big)^{\frac{|I_{1}|}{\beta }}\Big\|_{\dot{K}_{u,\frac{q}{|I_{1}|}%
			}^{\alpha m}} \\
		&\lesssim &\Big\|(\sup_{j\geq 0}|Q_{j}f|)^{|I_{2}|}\Big(\sum_{j=0}^{\infty
		}|2^{js}\Delta _{j}f|^{\beta }\Big)^{\frac{|I_{1}|}{\beta }}\Big\|_{\dot{K}%
			_{u,\frac{q}{|I_{1}|}}^{\alpha m}}.
	\end{eqnarray*}%
	Again, by H\"{o}lder's inequality we estimate the last term by%
	\begin{equation*}
	c\big\|\sup_{j\geq N}|Q_{_{j}}f|\big\|_{\dot{K}_{b,\infty }^{\alpha
	}}^{|I_{2}|}\Big\|\Big(\sum_{j=0}^{\infty }|2^{js}\Delta _{j}f|^{\beta }\Big)%
	^{\frac{1}{\beta }}\Big\|_{\dot{K}_{p,q}^{\alpha }}^{|I_{1}|}\lesssim \big\|f%
	\big\|_{\dot{K}_{p,q}^{\alpha }F_{\beta }^{s}}^{m},
	\end{equation*}%
	where we have used \eqref{Hardy1} and \eqref{embedding1}.
\end{proof}

\begin{thm}
	\label{Multiplication1 copy(1)}Let $0<p,q<\infty ,0<\beta \leq \infty
	,\alpha \geq 0$ and 
	\begin{equation*}
	s>\max \Big(0,\frac{n}{p}+\alpha -n\Big),\quad m=2,3,....
	\end{equation*}%
	Then%
	\begin{equation*}
	\big\|f^{m}\big\|_{\dot{K}_{p,q}^{\alpha }F_{\beta }^{s}}\lesssim \big\|f%
	\big\|_{\dot{K}_{p,q}^{\alpha }F_{\beta }^{s}}\big\|f\big\|_{\infty }^{m-1}
	\end{equation*}%
	holds for all $f\in \dot{K}_{p,q}^{\alpha }F_{\beta }^{s}\cap L^{\infty }.$
\end{thm}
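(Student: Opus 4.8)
The plan is to run the argument of Theorem~\ref{Multiplication1} with $f_{1}=\dots=f_{m}=f$, but to take advantage of the extra hypothesis $f\in L^{\infty}$ in order to replace the Sobolev embeddings and the Hardy-type bound \eqref{Hardy1} used there by the elementary estimates $\|\Delta _{j}f\|_{\infty }\lesssim \|f\|_{\infty }$ and $\|Q_{j}f\|_{\infty }\lesssim \|f\|_{\infty }$. First I would note that $f^{m}$ is a well-defined tempered distribution: since $s>\max (\sigma _{p},\tfrac{n}{p}+\alpha -n)$, Theorem~\ref{regular-distribution1 copy(1)} gives $\dot{K}_{p,q}^{\alpha }F_{\beta }^{s}\hookrightarrow L_{\mathrm{loc}}^{1}$, so $f\in L_{\mathrm{loc}}^{1}\cap L^{\infty }$ and the limit defining $\prod_{i=1}^{m}f=f^{m}$ exists in $\mathcal{S}^{\prime }(\mathbb{R}^{n})$. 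Then I would invoke Step~1 of the proof of Theorem~\ref{Multiplication1} verbatim to obtain $f^{m}=\Pi _{1}(f,\dots,f)+\Pi _{2}(f,\dots,f)$, where $\Pi _{1}(f,\dots,f)=\sum_{j\geq N}(Q_{j-N}f)^{m-1}\Delta _{j}f$ has its $j$-th summand Fourier-supported in $\{2^{j-1}\leq |\xi |\leq 2^{j+1}\}$, and $\Pi _{2}(f,\dots,f)$ is a finite sum of terms $\sum_{j\geq 0}(\Delta _{j}f)^{|I_{1}|}(Q_{j}f)^{|I_{2}|}$ (with $I_{1}\cup I_{2}=\{1,\dots,m\}$, $I_{1}\cap I_{2}=\emptyset $, $|I_{1}|\geq 2$) whose $j$-th summand is Fourier-supported in $\{|\xi |\leq 2^{j+N-2}\}$.

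For $\Pi _{1}$ I would apply Lemma~\ref{Key-lemma4} (after the index shift $l=j-N$, which only produces the harmless constant $2^{-Ns}$), bounding $\|\Pi _{1}(f,\dots,f)\|_{\dot{K}_{p,q}^{\alpha }F_{\beta }^{s}}$ by
\[
c\Big\|\Big(\sum_{j\geq N}2^{js\beta }\big|Q_{j-N}f\big|^{(m-1)\beta }|\Delta _{j}f|^{\beta }\Big)^{1/\beta }\Big\|_{\dot{K}_{p,q}^{\alpha }}\leq c\Big(\sup_{j\geq N}\|Q_{j-N}f\|_{\infty }\Big)^{m-1}\big\|f\big\|_{\dot{K}_{p,q}^{\alpha }F_{\beta }^{s}}.
\]
Since $Q_{j}f=\mathcal{F}^{-1}\psi _{j}\ast f$ with $\|\mathcal{F}^{-1}\psi _{j}\|_{1}=\|\mathcal{F}^{-1}\psi \|_{1}$ independent of $j$, one has $\|Q_{j}f\|_{\infty }\leq c\|f\|_{\infty }$, and hence $\|\Pi _{1}(f,\dots,f)\|_{\dot{K}_{p,q}^{\alpha }F_{\beta }^{s}}\lesssim \|f\|_{\infty }^{m-1}\|f\|_{\dot{K}_{p,q}^{\alpha }F_{\beta }^{s}}$.

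For $\Pi _{2}$, since the $j$-th summand is Fourier-supported in a ball of radius $\sim 2^{j}$, I would apply Lemma~\ref{Key-lemma5}; this is the only place where the lower bound $s>\max (\sigma _{p},\tfrac{n}{p}+\alpha -n)$ on the smoothness is used, and it reduces to the stated hypothesis $s>\max(0,\tfrac{n}{p}+\alpha -n)$ when $p\geq 1$. The lemma gives a bound of $\|\Pi _{2}(f,\dots,f)\|_{\dot{K}_{p,q}^{\alpha }F_{\beta }^{s}}$ by $c\big\|\big(\sum_{j\geq 0}2^{js\beta }|\Delta _{j}f|^{|I_{1}|\beta }|Q_{j}f|^{|I_{2}|\beta }\big)^{1/\beta }\big\|_{\dot{K}_{p,q}^{\alpha }}$, and then I would use the pointwise inequality
\[
|\Delta _{j}f(x)|^{|I_{1}|}\,|Q_{j}f(x)|^{|I_{2}|}\leq \|\Delta _{j}f\|_{\infty }^{|I_{1}|-1}\,\|Q_{j}f\|_{\infty }^{|I_{2}|}\,|\Delta _{j}f(x)|\leq c\,\|f\|_{\infty }^{m-1}\,|\Delta _{j}f(x)|,
\]
valid because $|I_{1}|\geq 2$ and $|I_{1}|+|I_{2}|=m$, together with the monotonicity of the $\dot{K}_{p,q}^{\alpha }$ quasi-norm, to conclude $\|\Pi _{2}(f,\dots,f)\|_{\dot{K}_{p,q}^{\alpha }F_{\beta }^{s}}\lesssim \|f\|_{\infty }^{m-1}\|f\|_{\dot{K}_{p,q}^{\alpha }F_{\beta }^{s}}$. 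Adding the two estimates finishes the proof.

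I do not expect a serious obstacle here: compared with Theorem~\ref{Multiplication1}, the hypothesis $f\in L^{\infty }$ is precisely what trivialises the control of $\sup_{j}\|Q_{j-N}f\|_{\infty }$, so the chain \eqref{Hardy1}--\eqref{embedding1} and the auxiliary condition $s\geq \tfrac{n}{p}-\tfrac{n}{q}$ become unnecessary. The points that still require care are: (i) checking that the Fourier-support conditions of Lemmas~\ref{Key-lemma4} and \ref{Key-lemma5} are met (these are inherited unchanged from Step~1 of the proof of Theorem~\ref{Multiplication1}); (ii) the lower bound on $s$ needed for Lemma~\ref{Key-lemma5}; and (iii) the $\mathcal{S}^{\prime }$-convergence of the two series and the identification of their sum with $f^{m}$, which is again as in that proof. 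The case $\beta =\infty $ is handled by the usual replacement of $\ell ^{\beta }$ sums by suprema, the pointwise estimates above being unaffected.
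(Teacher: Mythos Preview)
Your proposal is correct and follows essentially the same route as the paper's own proof: both use the paraproduct decomposition from Step~1 of Theorem~\ref{Multiplication1}, apply Lemma~\ref{Key-lemma4} to $\Pi_{1}$ and Lemma~\ref{Key-lemma5} to $\Pi_{2}$, and replace the Sobolev embeddings by the elementary bounds $\|Q_{j}f\|_{\infty},\|\Delta_{j}f\|_{\infty}\lesssim\|f\|_{\infty}$ (the paper's \eqref{help1}). One small remark: your comment that the condition $s>\max(\sigma_{p},\tfrac{n}{p}+\alpha-n)$ in Lemma~\ref{Key-lemma5} reduces to the stated hypothesis ``when $p\geq 1$'' undersells the situation---since $\alpha\geq 0$, for $0<p<1$ one has $\tfrac{n}{p}+\alpha-n\geq \tfrac{n}{p}-n=\sigma_{p}$, so the reduction holds for all $0<p<\infty$ and there is no gap.
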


\begin{proof}
	First, we estimate $\Pi _{1}(f,f,...,f)$. Recall that%
	\begin{equation}
	\sup_{j\in \mathbb{N}_{0}}|Q_{_{j}}f|\lesssim \big\|f\big\|_{\infty }\quad 
	\text{and}\quad \sup_{j\in \mathbb{N}_{0}}|\Delta _{j}f|\lesssim \big\|f%
	\big\|_{\infty }.  \label{help1}
	\end{equation}%
	Lemma \ref{Key-lemma4} gives%
	\begin{equation*}
	\big\|\Pi _{1}(f,f,...,f)\big\|_{\dot{K}_{p,q}^{\alpha }F_{\beta }^{s}}
	\end{equation*}%
	can be estimated by%
	\begin{eqnarray*}
		&&c\Big\|\Big(\sum_{j=N}^{\infty }|2^{js}(Q_{_{j-N}}f)^{m-1}\Delta
		_{j}f|^{\beta }\Big)^{\frac{1}{\beta }}\Big\|_{\dot{K}_{p,q}^{\alpha }} \\
		&\lesssim &\Big\|(\sup_{j\geq N}|Q_{_{j-N}}f|)^{m-1}\Big(\sum_{j=N}^{\infty
		}|2^{js}\Delta _{j}f|^{\beta }\Big)^{\frac{1}{\beta }}\Big\|_{\dot{K}%
			_{p,q}^{\alpha }} \\
		&\lesssim &\big\|f\big\|_{\infty }^{m-1}\big\|f\big\|_{\dot{K}_{p,q}^{\alpha
			}F_{\beta }^{s}},
	\end{eqnarray*}%
	where we used \eqref{help1}. Lemma \ref{Key-lemma5} gives%
	\begin{eqnarray*}
		\big\|\Pi _{2}(f,f,...,f)\big\|_{\dot{K}_{p,q}^{\alpha }F_{\beta }^{s}}
		&\lesssim &\Big\|\Big(\sum_{j=0}^{\infty }|2^{js}(Q_{j}f)^{|I_{2}|}(\Delta
		_{j}f)^{|I_{1}|}|^{\beta }\Big)^{\frac{1}{\beta }}\Big\|_{\dot{K}%
			_{p,q}^{\alpha }} \\
		&\lesssim &\big\|f\big\|_{\infty }^{m-1}\Big\|\Big(\sum_{j=0}^{\infty
		}|2^{js}\Delta _{j}f|^{\beta }\Big)^{\frac{1}{\beta }}\Big\|_{\dot{K}%
			_{p,q}^{\alpha }} \\
		&\lesssim &\big\|f\big\|_{\infty }^{m-1}\big\|f\big\|_{\dot{K}_{p,q}^{\alpha
			}F_{\beta }^{s}},
	\end{eqnarray*}%
	with the help of \eqref{help1}.
\end{proof}

\begin{rem}
	Theorem \ref{Multiplication1} in the case $m=2,p=q$ and $\alpha =0$ is
	contained in \cite{Ya86} and also in \cite{Si87}. For $m>2,p=q$ and $\alpha
	=0$ see \cite[Remark 17]{Si89} and \cite[p. 291]{RS96}. We refer the reader
	to the monograph \cite{RS96} and the paper \cite{Jo} for further details,
	historical remarks and more references on multiplication in Besov and
	Triebel-Lizorkin spaces.
\end{rem}

\begin{defn}
	Let $\mu >0$. Let $L\in \mathbb{N}_{0}$, and let $0<\nu \leq 1$ such that $%
	\mu =L+\nu $. The spaces $Lip\mu $ is the collection of all $f\in C^{L,%
		\mathrm{loc}}(\mathbb{R})$ such that%
	\begin{equation*}
	f^{(l)}(0)=0,\quad l=0,1,2,...,L
	\end{equation*}%
	and%
	\begin{equation*}
	\sup_{t_{0},t_{1}\in \mathbb{R}}\frac{|f^{(L)}(t_{0})-f^{(L)}(t_{1})|}{%
		|t_{0}-t_{1}|^{\nu }}<\infty .
	\end{equation*}%
	Then we put 
	\begin{equation*}
	\big\|f\big\|_{Lip\mu }=\sum_{j=0}^{L-1}\sup_{t\in \mathbb{R}}\frac{%
		|f^{(j)}(t)|}{|t|^{\mu -j}}+\sup_{t_{0},t_{1}\in \mathbb{R}}\frac{%
		|f^{(L)}(t_{0})-f^{(L)}(t_{1})|}{|t_{0}-t_{1}|^{\nu }}.
	\end{equation*}
\end{defn}

\begin{rem}
	$\big\|\cdot \big\|_{Lip\mu }$ defines not a norm, but for simplicity we
	will use this notation, see \cite[p. 295]{RS96}. A typical example of a
	function belongs to $Lip\mu $ is $f(t)=|t|^{\mu },\mu >1$. Recall that $%
	Lip\mu $ is not monotone with respect to $\mu $.
\end{rem}

We follow the same notations as in \cite[Chapter 5]{RS96}.

\begin{defn}
	$\mathrm{(i)}$ For $f\in \mathcal{S}^{\prime }(\mathbb{R}^{n})$ we define a
	distribution $\bar{f}$ by%
	\begin{equation*}
	\bar{f}(\varphi )=\overline{f(\bar{\varphi})},\quad \varphi \in \mathcal{S}(%
	\mathbb{R}^{n}).
	\end{equation*}%
	$\mathrm{(ii)}$ The space of real-valued distributions $\mathbb{S}^{\prime }(%
	\mathbb{R}^{n})$ is defined to be%
	\begin{equation*}
	\mathbb{S}^{\prime }(\mathbb{R}^{n})=\{f\in \mathcal{S}^{\prime }(\mathbb{R}%
	^{n}):\bar{f}=f\}.
	\end{equation*}%
	$\mathrm{(iii)}$ Let $A$ be a complex-valued, quasi-normed distribution
	space such that $A\hookrightarrow \mathcal{S}^{\prime }(\mathbb{R}^{n})$.
	Then we define the real-valued part $\mathbb{A}$ of $A$ to be the
	restriction of $A$ to $\mathbb{S}^{\prime }(\mathbb{R}^{n})$ equipped with
	the same quasi-norm as $A$.
\end{defn}

Now we are in position to state the first result of this section.

\begin{thm}
	\label{Key-theorem1}Let $0<p,q<\infty ,0<\beta \leq \infty ,\mu >1,\alpha
	\geq 0,s\geq \frac{n}{p}-\frac{n}{q}$ and 
	\begin{equation*}
	0<s<\frac{n}{p}+\alpha .
	\end{equation*}%
	We put 
	\begin{equation*}
	s_{\mu }=s-(\mu -1)\big(\frac{n}{p}+\alpha -s\big).
	\end{equation*}%
	Let $G\in Lip\mu $ and 
	\begin{equation}
	\max \Big(0,\frac{n}{p}+\alpha -n\Big)<s_{\mu }<\mu .  \label{assumption-mu}
	\end{equation}%
	Then 
	\begin{equation*}
	\big\|G(f)\big\|_{\dot{K}_{p,q}^{\alpha }F_{\beta }^{s_{\mu }}}\lesssim %
	\big\|G\big\|_{Lip\mu }\big\|f\big\|_{\dot{K}_{p,q}^{\alpha }F_{\infty
		}^{s}}^{\mu }
	\end{equation*}%
	holds for any $f\in \mathbb{\dot{K}}_{p,q}^{\alpha }\mathbb{F}_{\infty }^{s}$.
\end{thm}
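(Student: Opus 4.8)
The plan is to reduce the estimate for $G(f)$ to the key inequality of Lemma~\ref{Key-lemma1} by running a Taylor-expansion argument for the composition $G\circ f$ along the dyadic scales, exactly in the spirit of Runst--Sickel. First I would use the characterization of $\dot{K}_{p,q}^{\alpha}F_{\beta}^{s_{\mu}}$ via finite differences: since $s_{\mu}>0$ (and one checks $s_{\mu}<\mu$ from \eqref{assumption-mu}), for an appropriate integer $M$ with $M<s_{\mu}\le M+1\le\mu$ there is an equivalent quasi-norm of the form
\begin{equation*}
\big\|g\big\|_{\dot{K}_{p,q}^{\alpha}F_{\beta}^{s_{\mu}}}\approx \big\|g\big\|_{\dot{K}_{p,q}^{\alpha}F_{\beta}^{0}}+\Big\|\Big(\sum_{k}2^{ks_{\mu}\beta}2^{nk\beta}\big|{\textstyle\int_{\bar B_{-k}}}\Delta_z^{M+1}g(\cdot)\,dz\big|^{\beta}\Big)^{1/\beta}\Big\|_{\dot{K}_{p,q}^{\alpha}},
\end{equation*}
or the first-difference version when $s_{\mu}<1$, which is the case packaged directly by $I_k^{\mu}$. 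Since $G\in Lip\mu$ vanishes to order $L$ at $0$ and $G^{(L)}$ is $\nu$-H\"older with $\mu=L+\nu$, a finite Taylor expansion of $G$ gives the pointwise bound
\begin{equation*}
\big|\Delta_z^{M+1}\big(G(f)\big)(x)\big|\lesssim \big\|G\big\|_{Lip\mu}\sum \big(\text{products of }|f(x+z_i)-f(x)|\text{-type factors}\big),
\end{equation*}
whose total homogeneity in these finite differences is $\mu$; each such term is controlled by $|f(x+z)-f(x)|^{\mu}$ up to bounded factors of $|f|$ absorbed by the vanishing-at-zero hypotheses on $G$. Integrating over $z\in\bar B_{-k}$ then produces exactly $I_k^{\mu}(f)(x)$ (possibly with the relevant H\"older exponent split among several difference factors).

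The second step is to feed this into Lemma~\ref{Key-lemma1}: the finite-difference term in $\|G(f)\|_{\dot{K}_{p,q}^{\alpha}F_{\beta}^{s_{\mu}}}$ is bounded, after the pointwise reduction above, by a constant times $\|G\|_{Lip\mu}$ times
\begin{equation*}
\Big\|\Big(\sum_{k=-\infty}^{\infty}2^{(n+s_{\mu})k\beta}|I_k^{\mu}(f)|^{\beta}\Big)^{1/\beta}\Big\|_{\dot{K}_{p,q}^{\alpha}}\lesssim \big\|f\big\|_{\dot{K}_{p\mu,q\mu}^{\alpha/\mu}F_{\beta\mu}^{s_{\mu}/\mu}}^{\mu},
\end{equation*}
where the hypothesis $\max(\sigma_{p,\beta},\frac{n}{p}+\alpha-n)<s_{\mu}<\mu$ from \eqref{assumption-mu} is precisely what licenses Lemma~\ref{Key-lemma1}. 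It then remains to identify $\dot{K}_{p\mu,q\mu}^{\alpha/\mu}F_{\beta\mu}^{s_{\mu}/\mu}$ as a space embedded into $\dot{K}_{p,q}^{\alpha}F_{\infty}^{s}$: the exponents are rigged so that, writing $\sigma_{\mu}:=s_{\mu}/\mu$, one has the Sobolev-type differential relation
\begin{equation*}
\sigma_{\mu}-\frac{n}{p\mu}-\frac{\alpha}{\mu}=\frac{1}{\mu}\Big(s_{\mu}-\frac{n}{p}-\alpha\Big)=\frac{1}{\mu}\Big(s-\mu\big(\tfrac{n}{p}+\alpha-s\big)-\frac{n}{p}-\alpha\Big),
\end{equation*}
and the algebraic identity $s_{\mu}=s-(\mu-1)(\frac{n}{p}+\alpha-s)$ makes the right side equal $s-\frac{n}{p}-\alpha$, i.e. $\dot{K}_{p\mu,q\mu}^{\alpha/\mu}F_{\beta\mu}^{\sigma_{\mu}}$ and $\dot{K}_{p,q}^{\alpha}F_{\infty}^{s}$ sit on the same Sobolev line; together with $q\le p$ (forced by $s\ge\frac{n}{p}-\frac{n}{q}$, which gives $\frac{n}{q}\ge\frac{n}{p}-s$ hence $q\le p$ in the relevant range) and $\alpha/\mu\le\alpha$, Theorem~\ref{embeddings3} yields $\dot{K}_{p,q}^{\alpha}F_{\infty}^{s}\hookrightarrow \dot{K}_{p\mu,q\mu}^{\alpha/\mu}F_{\beta\mu}^{\sigma_{\mu}}$, so the $\mu$-th power of the right-hand norm is $\lesssim\|f\|_{\dot{K}_{p,q}^{\alpha}F_{\infty}^{s}}^{\mu}$. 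Finally the low-frequency term $\|G(f)\|_{\dot{K}_{p,q}^{\alpha}F_{\beta}^{0}}=\|G(f)\|_{\dot{K}_{p,q}^{\alpha}}$ is handled more crudely: pointwise $|G(f)|\lesssim\|G\|_{Lip\mu}|f|^{\mu}$, so this is $\|\,|f|^{\mu}\,\|_{\dot{K}_{p,q}^{\alpha}}=\|f\|_{\dot{K}_{p\mu,q\mu}^{\alpha/\mu}}^{\mu}$, again controlled via the same embedding.

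The main obstacle I expect is the bookkeeping in the pointwise difference estimate: one must verify that every term arising from expanding $\Delta_z^{M+1}(G\circ f)$ via Fa\`a di Bruno / telescoping of differences really has combined difference-homogeneity exactly $\mu$ in the $f$-increments, with the remaining factors uniformly bounded by $\|G\|_{Lip\mu}$ (this is where the conditions $f^{(l)}(0)=0$ for $l\le L$ and the H\"older continuity of $G^{(L)}$ are all used), and that after integrating in $z$ one is genuinely left with $I_k^{\mu}(f)$ and not some mixed object that Lemma~\ref{Key-lemma1} does not cover — a product like $I_k^{\mu_1}(f)\cdots I_k^{\mu_r}(f)$ with $\sum\mu_i=\mu$ must be reduced to a single $I_k^{\mu}(f)$ by H\"older in the inner $z$-integral, which works because all the balls $\bar B_{-k}$ are the same. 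Once that reduction is clean, the rest is a direct application of Lemma~\ref{Key-lemma1} and Theorem~\ref{embeddings3}.
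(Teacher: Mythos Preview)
Your strategy has a genuine gap at its core, namely the pointwise reduction step. You claim that after expanding $\Delta_z^{M+1}(G\circ f)$ all terms have difference-homogeneity exactly $\mu$ in the $f$-increments ``with the remaining factors uniformly bounded by $\|G\|_{Lip\mu}$''. This is false when $f\notin L^\infty$, which is precisely the situation here since $s<\frac{n}{p}+\alpha$. Already for first differences,
\[
|G(f(x+z))-G(f(x))|\lesssim \|G\|_{Lip\mu}\,|f(x+z)-f(x)|\big(|f(x)|^{\mu-1}+|f(x+z)|^{\mu-1}\big),
\]
and the factor $|f(x)|^{\mu-1}$ is \emph{not} bounded; take $G(t)=t^2$, $\mu=2$, and large $|f(x)|$ with tiny increment to see that $|\Delta_z(G\circ f)|$ is of order $|f(x)|\cdot|\Delta_z f|$, not $|\Delta_z f|^2$. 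For higher differences the picture is worse: the Fa\`a di Bruno expansion produces products in which several factors are values of $f$ (or of $G^{(l)}(f)$, controlled only by $|f|^{\mu-l}$) and only some are increments. No H\"older-in-$z$ trick inside $\bar B_{-k}$ can convert $|f(x)|^{\mu-1}|\Delta_z f(x)|$ into $I_k^\mu(f)(x)$; the $|f(x)|^{\mu-1}$ factor lives in $x$, not in $z$. So Lemma~\ref{Key-lemma1} alone cannot close the argument.

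The paper deals with exactly this difficulty, and it does so differently from your outline. It does not use a difference characterisation; it estimates $\varphi_k\ast G(f)(x)$ directly by Taylor-expanding $G(f(y))$ around the \emph{smooth} point $\psi_k\ast f(x)$ (not around $0$ or around $f(x)$). This produces two kinds of terms: polynomial pieces $H_{k,1,j,l}$, which carry the unbounded factors $(\psi_k\ast f)^{l-j}G^{(l)}(\psi_k\ast f)\cdot\varphi_k\ast f^j$, and a remainder $H_{k,2}$. The polynomial pieces are handled by the product estimate of Theorem~\ref{Multiplication1} for $f^j$, combined with a careful H\"older split using the auxiliary exponents $p_1,p_2,\bar p,\bar s$ and the embeddings \eqref{Sobolev-emb1}--\eqref{Sobolev-emb3}; this is where the hypothesis $s\ge\frac{n}{p}-\frac{n}{q}$ and the choice $s_\mu<\bar s<\min(\mu,s_L)$ enter. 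Only the genuine remainder $S_{k,2}$, after one more splitting, is governed by $I_k^\mu(f)$ and treated via Lemma~\ref{Key-lemma1}. Your embedding $\dot K_{p,q}^\alpha F_\infty^s\hookrightarrow \dot K_{p\mu,q\mu}^{\alpha/\mu}F_{\beta\mu}^{s_\mu/\mu}$ is indeed the one used at \eqref{Sobolev-emb3}, but it handles only that last piece, not the whole composition. (Incidentally, your justification ``$q\le p$ forced by $s\ge\frac{n}{p}-\frac{n}{q}$'' is incorrect and also unnecessary: the monotonicity needed in Theorem~\ref{embeddings3} comes from $p\le p\mu$, $q\le q\mu$, $\alpha\ge\alpha/\mu$, all automatic since $\mu>1$ and $\alpha\ge 0$.)
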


\begin{proof}
	We will do the proof in three steps.
	
	\textbf{Step 1.}\textit{\ Preparation. }Consider the partition of the unity $%
	\{\mathcal{F}\varphi _{j}\}_{j\in \mathbb{N}_{0}}$. Let $f\in \dot{K}%
	_{p,q}^{\alpha }F_{\infty }^{s}$. We set%
	\begin{equation*}
	\frac{1}{b}=\frac{\frac{n}{p}+\alpha -s}{n+\alpha p}\quad \text{and}\quad
	\alpha _{1}=\frac{\alpha p}{b}.
	\end{equation*}%
	Then 
	\begin{equation}
	\max (1,p)<b<\infty \text{\quad and\quad }-\frac{n}{b}<\alpha _{1}<\min \big(%
	\alpha ,n-\frac{n}{b}\big).  \label{new-cond}
	\end{equation}%
	Hence 
	\begin{equation}
	\dot{K}_{p,q}^{\alpha }F_{\infty }^{s}\hookrightarrow \dot{K}_{b,r}^{\alpha
		_{1}},\quad \max (1,q,\mu )<r.  \label{embtoherz}
	\end{equation}%
	Since $G\in Lip\mu $, $\frac{b}{\mu }>1$ and $\alpha _{1}\mu <n-\frac{n\mu }{%
		b}$, we have%
	\begin{equation*}
	G(f)\in \dot{K}_{\frac{b}{\mu },\frac{r}{\mu }}^{\alpha _{1}\mu
	}\hookrightarrow \mathcal{S}^{\prime }(\mathbb{R}^{n})
	\end{equation*}%
	and so we can interpret $G$ as a mapping of a subspace of $\mathcal{S}%
	^{\prime }(\mathbb{R}^{n})$ into $\mathcal{S}^{\prime }(\mathbb{R}^{n})$. In
	addition%
	\begin{equation}
	f=\sum_{j=0}^{\infty }\varphi _{j}\ast f,\quad \text{in}\quad \dot{K}_{\mu
		,r}^{\alpha _{1}-\frac{n}{\mu }+\frac{n}{b}}.  \label{converges}
	\end{equation}%
	Indeed, let 
	\begin{equation*}
	\varrho _{k}=\sum\limits_{j=0}^{k}\varphi _{j}\ast f,\quad k\in \mathbb{N}%
	_{0}.
	\end{equation*}%
	Obviously $\{\varrho _{k}\}$ converges to $f$ in $\mathcal{S}^{\prime }(%
	\mathbb{R}^{n})$ and by the embedding \eqref{embtoherz}\ we derive that $%
	\{\varrho _{k}\}\subset \dot{K}_{b,r}^{\alpha _{1}}$. Furthermore, $%
	\{\varrho _{k}\}$ is a Cauchy sequences in $\dot{K}_{b,r}^{\alpha _{1}}$ and
	hence it converges to $g\in \dot{K}_{b,r}^{\alpha _{1}}$. Let us prove that $%
	f=g$ a.e. Let $\varphi \in \mathcal{D}(\mathbb{R}^{n})$. We write%
	\begin{equation*}
	\langle f-g,\varphi \rangle =\langle f-\varrho _{N},\varphi \rangle +\langle
	g-\varrho _{N},\varphi \rangle ,\quad N\in \mathbb{N}_{0}.
	\end{equation*}%
	Here $\langle \cdot ,\cdot \rangle $ denotes the duality bracket between $%
	\mathcal{D}^{\prime }(\mathbb{R}^{n})$ and $\mathcal{D}(\mathbb{R}^{n})$.
	Clearly, the first term tends to zero as $N\rightarrow \infty $, while by H%
	\"{o}lder's\ inequality there exists a constant $C>0$ independent of $N$
	such that 
	\begin{equation*}
	|\langle g-\varrho _{N},\varphi \rangle |\leq C\big\|g-\varrho _{N}\big\|_{%
		\dot{K}_{b,r}^{\alpha _{1}}},
	\end{equation*}%
	which tends to zero as $N\rightarrow \infty $. Therefore $f=g$ almost
	everywhere. Consequently, $f=\sum_{j=0}^{\infty }\varphi _{j}\ast f\ $in $%
	\dot{K}_{b,r}^{\alpha _{1}}$. Finally, \eqref{converges}, follows by the
	embedding $\dot{K}_{b,r}^{\alpha _{1}}\hookrightarrow \dot{K}_{\mu
		,r}^{\alpha _{1}-\frac{n}{\mu }+\frac{n}{b}}$. We have also%
	\begin{equation*}
	f=\sum_{j=0}^{\infty }\varphi _{j}\ast f,\quad \text{in}\quad L_{\mathrm{loc}%
	}^{\mu },
	\end{equation*}%
	because of $\dot{K}_{\mu ,r}^{\alpha _{1}-\frac{n}{\mu }+\frac{n}{b}%
	}\hookrightarrow L_{\mathrm{loc}}^{\mu }$. Indeed, let $B(0,2^{M})\subset 
	\mathbb{R}^{n},M\in \mathbb{Z}$. H\"{o}lder's inequality and the fact that $%
	\alpha _{1}-\frac{n}{\mu }+\frac{n}{b}=\frac{n}{p}+\alpha -\frac{n}{\mu }%
	-s<0 $, see \eqref{assumption-mu}, give%
	\begin{eqnarray*}
		\big\|f\big\|_{L^{\mu }(B(0,2^{M}))}^{\mu } &=&\sum_{i=-\infty }^{M}\big\|%
		f\chi _{R_{i}}\big\|_{\mu }^{\mu } \\
		&=&\sum_{i=-\infty }^{M}2^{-i(\alpha _{1}-\frac{n}{\mu }+\frac{n}{b})\mu
		}2^{i(\alpha _{1}-\frac{n}{\mu }+\frac{n}{b})\mu }\big\|f\chi _{R_{i}}\big\|%
		_{\mu }^{\mu } \\
		&\leq &C(M)\Big(\sum_{i=-\infty }^{M}2^{i(\alpha _{1}-\frac{n}{\mu }+\frac{n%
			}{b})r}\big\|f\chi _{i}\big\|_{\mu }^{r}\Big)^{\frac{\mu }{r}} \\
		&\lesssim &\big\|f\big\|_{\dot{K}_{\mu ,r}^{\alpha _{1}-\frac{n}{\mu }+\frac{%
					n}{b}}}^{\mu }.
	\end{eqnarray*}%
	We put $\mu =L+\nu $, where $0<\nu \leq 1$. The function $G$ has the Taylor
	expansion%
	\begin{equation*}
	G(t)=\sum_{l=0}^{L-1}\frac{G^{(l)}(z)}{l!}(t-z)^{l}+R(t,z),\quad t,z\in 
	\mathbb{R},
	\end{equation*}%
	where%
	\begin{equation*}
	R(t,z)=\frac{1}{L!}\int_{z}^{t}(t-y)^{L-1}G^{(L)}(y)dy.
	\end{equation*}%
	Since $f\in \dot{K}_{p,q}^{\alpha }F_{\infty }^{s}$ and $s>\max(0,\frac{n}{p}+\alpha-n)$
	there exists a set $A$ of Lebesgue-measure zero such that $|f(x)|<\infty $\
	for all $x\in \mathbb{R}^{n}\backslash A$. We can we suppose that $%
	|f(x)|<\infty $\ for all $x\in \mathbb{R}^{n}$. Therefore 
	\begin{eqnarray*}
		G(f(y)) &=&\sum_{l=0}^{L-1}\frac{1}{l!}%
		\sum_{j=0}^{l}(-1)^{l-j}C_{j}^{l}f^{j}(y)(\psi _{k}\ast
		f(x))^{l-j}G^{(l)}(\psi _{k}\ast f(x)) \\
		&&+R_{k}(f(y),\psi _{k}\ast f(x)),
	\end{eqnarray*}%
	where, $x,y\in \mathbb{R}^{n},$ 
	\begin{equation*}
	\psi _{k}\ast f=\sum_{i=0}^{k}\varphi _{i}\ast f,\quad k\in \mathbb{N}_{0}
	\end{equation*}%
	and 
	\begin{equation*}
	R_{k}(f(y),\psi _{k}\ast f(x))=\frac{1}{L!}\int_{\psi _{k}\ast
		f(x)}^{f(y)}(f(y)-h)^{L-1}G^{(L)}(h)dh.
	\end{equation*}%
	We put $K_{j,l}=(-1)^{l-j}C_{j}^{l}\frac{1}{l!}$, with $0\leq l\leq
	L-1,0\leq j\leq l$. Consequently%
	\begin{equation*}
	\varphi _{k}\ast G(f)(x)=\int_{\mathbb{R}^{n}}\varphi
	_{k}(x-y)G(f(y))dy=\sum_{l=0}^{L-1}\sum_{j=0}^{l}H_{k,1,j,l}(x)+H_{k,2}(x),
	\end{equation*}%
	where%
	\begin{eqnarray*}
		H_{k,1,j,l}(x) &=&K_{j,l}(\psi _{k}\ast f(x))^{l-j}G^{(l)}(\psi _{k}\ast
		f(x))\int_{\mathbb{R}^{n}}\varphi _{k}(x-y)f^{j}(y)dy \\
		&=&K_{j,l}(\psi _{k}\ast f(x))^{l-j}G^{(l)}(\psi _{k}\ast f(x))\varphi
		_{k}\ast f^{j}(x)
	\end{eqnarray*}%
	with $0\leq l\leq L-1,0\leq j\leq l$ and%
	\begin{equation*}
	H_{k,2}(x)=\frac{1}{L!}\int_{\mathbb{R}^{n}}\varphi _{k}(x-y)\int_{\psi
		_{k}\ast f(x)}^{f(y)}(f(y)-h)^{L-1}G^{(L)}(h)dhdy.
	\end{equation*}%
	We will estimate each term separately.
	
	\textbf{Step 2.}\textit{\ Estimate of} $H_{k,1,j,l}$. First assume that $%
	0<j\leq L-1$. Recall that%
	\begin{equation*}
	s_{i}=s-(i-1)\big(\frac{n}{p}+\alpha -s\big),\quad i>1
	\end{equation*}%
	and $s_{i}\leq s_{v},i\geq v>1$. Define%
	\begin{equation*}
	p_{1}=\frac{n+\alpha p}{(\mu -j)(\frac{n}{p}+\alpha -s)}
	\end{equation*}%
	and%
	\begin{equation}
	p_{2}=\frac{n+\alpha p}{\bar{s}-s_{j}+\frac{n}{p}+\alpha },  \label{p-two}
	\end{equation}%
	where%
	\begin{equation*}
	s_{\mu }<\bar{s}<\min (\mu ,s_{L}),
	\end{equation*}%
	with $0<j\leq l,0\leq l\leq L-1$. Since $s_{j}-\frac{n}{p}-\alpha =-j(\frac{n%
	}{p}+\alpha -s)<0$, \eqref{p-two} is well defined. We put $\frac{1}{\bar{p}}=%
	\frac{1}{p_{1}}+\frac{1}{p_{2}}$. Hence%
	\begin{equation*}
	\bar{s}-\frac{n+\alpha p}{\bar{p}}=s_{\mu }-\frac{n}{p}-\alpha ,\quad \bar{p}%
	<p<\min \big((\mu -j)p_{1},p_{2}\big).
	\end{equation*}%
	In addition%
	\begin{equation*}
	s-\frac{n}{p}-\alpha =\frac{-n}{(\mu -j)p_{1}}-\frac{\alpha p}{(\mu -j)p_{1}}%
	\quad \text{and}\quad s_{j}-\frac{n}{p}-\alpha =\bar{s}-\frac{n}{p_{2}}-%
	\frac{\alpha p}{p_{2}}.
	\end{equation*}%
	These choices guarantee the Sobolev embeddings%
	\begin{equation}
	\dot{K}_{p,q}^{\alpha }F_{p,\beta }^{s}\hookrightarrow \dot{K}_{(\mu
		-j)p_{1},\infty }^{\frac{\alpha p}{(\mu -j)p_{1}}}F_{1}^{0},\quad \dot{K}_{%
		\bar{p},q}^{\frac{\alpha p}{\bar{p}}}F_{\infty }^{\bar{s}}\hookrightarrow 
	\dot{K}_{p,q}^{\alpha }F_{\beta }^{s_{\mu }}  \label{Sobolev-emb1}
	\end{equation}%
	and 
	\begin{equation}
	\dot{K}_{p,q}^{\alpha }F_{p,\beta }^{s}\hookrightarrow \dot{K}_{p,q}^{\alpha
	}F_{r}^{s_{j}}\hookrightarrow \dot{K}_{p_{2},q}^{\frac{\alpha p}{p_{2}}%
	}F_{r}^{\bar{s}},\quad 0<r\leq \infty ,  \label{Sobolev-emb2}
	\end{equation}%
	see Theorem \ref{embeddings3}. We will prove that%
	\begin{equation*}
	\Big\|\sup_{k\in \mathbb{N}_{0}}2^{k\bar{s}}\big|H_{k,1,j,l}+H_{k,2}\big|%
	\Big\|_{\dot{K}_{\bar{p},q}^{\frac{\alpha p}{\bar{p}}}}\lesssim \big\|f\big\|%
	_{\dot{K}_{p,q}^{\alpha }F_{p,\beta }^{s}}^{\mu }.
	\end{equation*}%
	By H\"{o}lder's inequality and the fact that%
	\begin{equation}
	|G^{(l)}(t)|\leq \big\|G\big\|_{Lip\mu }|t|^{\mu -l},\quad t\in \mathbb{R}%
	,l=0,...,L-1  \label{G-properties}
	\end{equation}%
	we obtain\ that%
	\begin{eqnarray*}
		2^{k\bar{s}}\big\|H_{k,1,j,l}\big\|_{\dot{K}_{\bar{p},q}^{\frac{\alpha p}{%
					\bar{p}}}} &\lesssim &\big\||\psi _{k}\ast f|^{l-j}G^{(l)}(\psi _{k}\ast f)%
		\big\|_{\dot{K}_{p_{1},\infty }^{\frac{\alpha p}{p_{1}}}}2^{k\bar{s}}\big\|%
		\varphi _{k}\ast f^{j}\big\|_{\dot{K}_{p_{2},q}^{\frac{\alpha p}{p_{2}}}} \\
		&\lesssim &\big\|G\big\|_{Lip\mu }\big\|\psi _{k}\ast f\big\|_{\dot{K}_{(\mu
				-j)p_{1},\infty }^{\frac{\alpha p}{(\mu -j)p_{1}}}}^{\mu -j}2^{k\bar{s}}%
		\big\|\varphi _{k}\ast f^{j}\big\|_{\dot{K}_{p_{2},q}^{\frac{\alpha p}{p_{2}}%
		}} \\
		&\lesssim &\big\|G\big\|_{Lip\mu }\big\|f\big\|_{\dot{K}_{(\mu
				-j)p_{1},\infty }^{\frac{\alpha p}{(\mu -j)p_{1}}}F_{1}^{0}}^{\mu -j}\big\|%
		f^{j}\big\|_{\dot{K}_{p_{2},q}^{\frac{\alpha p}{p_{2}}}F_{\infty }^{\bar{s}}}
		\\
		&\lesssim &\big\|G\big\|_{Lip\mu }\big\|f\big\|_{\dot{K}_{p,q}^{\alpha
			}F_{p,\beta }^{s}}^{\mu -j}\big\|f^{j}\big\|_{\dot{K}_{p,q}^{\alpha
			}F_{r}^{s_{j}}} \\
		&\lesssim &\big\|G\big\|_{Lip\mu }\big\|f\big\|_{\dot{K}_{p,q}^{\alpha
			}F_{p,\beta }^{s}}^{\mu }
	\end{eqnarray*}%
	for any $k\in \mathbb{N}_{0}$, where we have used the embeddings %
	\eqref{Sobolev-emb1} and \eqref{Sobolev-emb2}, and Theorem \ref%
	{Multiplication1} with the fact that%
	\begin{equation*}
	s>\max \Big(0,\frac{n}{p}+\alpha -\frac{n}{\mu }\Big),
	\end{equation*}%
	see \eqref{assumption-mu}. Now we estimate $H_{k,1,0,l}$. Let us recall some
	properties of our system $\{\mathcal{F}\varphi _{k}\}_{k\in \mathbb{N}_{0}}$%
	. It holds%
	\begin{equation*}
	\int_{\mathbb{R}^{n}}\varphi _{k}(y)dy=0\quad \text{and}\quad \int_{\mathbb{R%
		}^{n}}\varphi _{0}(y)dy=c\neq 0\quad k\in \mathbb{N}.
	\end{equation*}%
	Therefore we need only to estimate $H_{0,1,0,l},0\leq l\leq L-1$. We have,
	again by \eqref{G-properties}, 
	\begin{equation*}
	\big\|H_{0,1,0,l}\big\|_{\dot{K}_{\bar{p},q}^{\frac{\alpha p}{\bar{p}}%
	}}\lesssim \big\|G\big\|_{Lip\mu }\big\||\varphi _{0}\ast f|^{\mu }\big\|_{%
		\dot{K}_{\bar{p},q}^{\frac{\alpha p}{\bar{p}}}}\lesssim \big\|G\big\|%
	_{Lip\mu }\big\|f\big\|_{\dot{K}_{\bar{p}\mu ,q}^{\frac{\alpha p}{\bar{p}\mu 
		}}F_{\infty }^{0}}^{\mu }.
	\end{equation*}%
	Thanks to the embeddings%
	\begin{equation}
	\dot{K}_{p,q}^{\alpha }F_{\beta }^{s}\hookrightarrow \dot{K}_{\bar{p}\mu
		,q}^{\frac{\alpha p}{\bar{p}\mu }}F_{\infty }^{\frac{\bar{s}}{\mu }%
	}\hookrightarrow \dot{K}_{\bar{p}\mu ,q}^{\frac{\alpha p}{\bar{p}\mu }%
	}F_{\infty }^{0},  \label{Sobolev-emb3}
	\end{equation}%
	because of%
	\begin{equation*}
	\bar{s}-\frac{n+\alpha p}{\bar{p}}=\mu (s-\frac{n}{p}-\alpha )\quad \text{and%
	}\quad \bar{p}\mu >p,
	\end{equation*}%
	we obtain%
	\begin{equation*}
	\big\|H_{0,1,0,l}\big\|_{\dot{K}_{\bar{p},q}^{\frac{\alpha p}{\bar{p}}%
	}}\lesssim \big\|G\big\|_{Lip\mu }\big\|f\big\|_{\dot{K}_{p,q}^{\alpha
		}F_{\beta }^{s}}^{\mu }.
	\end{equation*}%
	\textbf{Step 3.}\textit{\ Estimate } of $H_{k,2}$. We have%
	\begin{eqnarray*}
		&&\int_{\psi _{k}\ast f(x)}^{f(y)}(f(y)-h)^{L-1}G^{(L)}(h)dh \\
		&=&G^{(L)}(\psi _{k}\ast f(x))\frac{(f(y)-\psi _{k}\ast f(x))^{L}}{L} \\
		&&+\int_{\psi _{k}\ast f(x)}^{f(y)}(f(y)-h)^{L-1}(G^{(L)}(h)-G^{(L)}(\psi
		_{k}\ast f(x)))dh \\
		&=&H_{k,2,1}(x,y)+H_{k,2,2}(x,y).
	\end{eqnarray*}%
	The estimation of $H_{k,2,1}$ can be obtained by the same arguments given in
	Step 2. We estimate $H_{k,2,2}$. Using the fact that%
	\begin{equation*}
	|G^{(L)}(t_{0})-G^{(L)}(t_{1})|\leq \big\|G\big\|_{Lip\mu
	}|t_{0}-t_{1}|^{\nu },\quad t_{0},t_{1}\in \mathbb{R},
	\end{equation*}%
	we obtain%
	\begin{equation*}
	|H_{k,2,2}(x,y)|\lesssim \big\|G\big\|_{Lip\mu }|\psi _{k}\ast
	f(x)-f(y)|^{\mu },\quad x,y\in \mathbb{R}^{n}.
	\end{equation*}%
	Obviously%
	\begin{equation*}
	|\psi _{k}\ast f(x)-f(y)|\leq |\psi _{k}\ast
	f(x)-f(x)|+|f(x)-f(y)|,
	\end{equation*}%
	which yields that%
	\begin{equation*}
	\int_{\mathbb{R}^{n}}|\varphi _{k}(x-y)||H_{k,2,2}(x,y)|dy\leq
	S_{k,1}(f)(x)+S_{k,2}(f)(x),
	\end{equation*}%
	where%
	\begin{eqnarray*}
		S_{k,1}(f)(x) &=&\big\|G\big\|_{Lip\mu }\int_{\mathbb{R}^{n}}|\varphi
		_{k}(x-y)||\psi _{k}\ast f(x)-f(x)|^{\mu }dy \\
		&\lesssim &\big\|G\big\|_{Lip\mu }|\psi _{k}\ast f(x)-f(x)|^{\mu }
	\end{eqnarray*}%
	and%
	\begin{equation*}
	S_{k,2}(f)(x)=\big\|G\big\|_{Lip\mu }\int_{\mathbb{R}^{n}}|\varphi
	_{k}(x-y)||f(x)-f(y)|^{\mu }dy.
	\end{equation*}%
	First we estimate $S_{k,1}(f)$. Observe that%
	\begin{equation*}
	f-\psi _{k}\ast f=\sum_{i=k+1}^{\infty }\varphi _{i}\ast f,\quad k\in 
	\mathbb{N}_{0}.
	\end{equation*}%
	Therefore%
	\begin{eqnarray*}
		\Big\|\sup_{k\in \mathbb{N}_{0}}\big(2^{k\bar{s}}S_{k,1}(f)\big)\Big\|_{\dot{%
				K}_{\bar{p},q}^{\frac{\alpha p}{\bar{p}}}} &\lesssim &\big\|G\big\|_{Lip\mu }%
		\Big\|\sup_{k\in \mathbb{N}_{0}}2^{k\frac{\bar{s}}{\mu }}\Big(%
		\sum_{i=k+1}^{\infty }|\varphi _{i}\ast f|\Big)\Big\|_{\dot{K}_{\bar{p}\mu
				,q}^{\frac{\alpha p}{\bar{p}\mu }}}^{\mu } \\
		&\lesssim &\big\|G\big\|_{Lip\mu }\Big\|\sup_{k\in \mathbb{N}_{0}}2^{k\frac{%
				\bar{s}}{\mu }}|\varphi _{k}\ast f|\Big\|_{\dot{K}_{\bar{p}\mu ,q}^{\frac{%
					\alpha p}{\bar{p}\mu }}}^{\mu } \\
		&\lesssim &\big\|G\big\|_{Lip\mu }\big\|f\big\|_{\dot{K}_{\bar{p}\mu ,q}^{%
				\frac{\alpha p}{\bar{p}\mu }}F_{\infty }^{\frac{\bar{s}}{\mu }}}^{\mu },
	\end{eqnarray*}%
	where we used Lemma \ref{lq-inequality}. We conclude our desired estimate by
	the embeddings \eqref{Sobolev-emb3}.
	
	Now we estimate $S_{k,2}(f)$. Since $\psi ,\varphi \in \mathcal{S}(\mathbb{R}%
	^{n})$, this yields%
	\begin{equation*}
	|\varphi _{k}(z)|\lesssim \eta _{2^{k},M}(z),\quad z\in \mathbb{R}^{n},
	\end{equation*}%
	where $M$ is an arbitrary positive real number and the implicit constant is
	independent of $z$ and $k\in \mathbb{N}_{0}$. By means of this inequality we
	find 
	\begin{eqnarray*}
		&&\int_{\mathbb{R}^{n}}|\varphi _{k}(-z)||f(x)-f(x+z)|^{\mu }dz \\
		&\lesssim &\int_{\bar{B}_{k}}|\varphi _{k}(-z)||f(x)-f(x+z)|^{\mu }dz \\
		&&+\sum_{l=0}^{\infty }\int_{\bar{B}_{k-l-1}\backslash B_{k-l}}|\varphi
		_{k}(-z)||f(x)-f(x+z)|^{\mu }dz \\
		&\lesssim &2^{kn}\sum_{l=0}^{\infty }2^{-lM}\int_{\bar{B}%
			_{k-l-1}}|f(x)-f(x+z)|^{\mu }dz \\
		&\lesssim &2^{kn}\sum_{l=0}^{\infty }2^{-lM}I_{k-l}^{\mu }(f)(x),
	\end{eqnarray*}%
	where the implicit constant is independent of $x$ and $k$. Let $d=\min (1,%
	\bar{p})$. Taking $M$ large enough such that $M-n-\bar{s}-1>0$ and using
	Lemma \ref{Key-lemma1}, we obtain 
	\begin{eqnarray*}
		&&\Big\|\sup_{k\in \mathbb{N}_{0}}2^{k\bar{s}}\big|S_{k,2}(f)\big|\Big\|_{%
			\dot{K}_{\bar{p},q}^{\frac{\alpha p}{\bar{p}}}}^{d} \\
		&\lesssim &\big\|G\big\|_{Lip\mu }^{d}\sum_{l=0}^{\infty }2^{-lMd}\Big\|%
		\sup_{k\in \mathbb{N}_{0}}\big(2^{k(n+\bar{s})}I_{k-l}^{\mu }(f)\big)\Big\|_{%
			\dot{K}_{\bar{p},q}^{\frac{\alpha p}{\bar{p}}}}^{d} \\
		&\lesssim &\big\|G\big\|_{Lip\mu }^{d}\sum_{l=0}^{\infty }2^{-l(M-n-\bar{s}%
			)d}\Big\|\sup_{i\geq -l}\big(2^{i(n+\bar{s})}I_{i}^{\mu }(f)\big)\Big\|_{%
			\dot{K}_{\bar{p},q}^{\frac{\alpha p}{\bar{p}}}}^{d} \\
		&\lesssim &\big\|G\big\|_{Lip\mu }^{d}\big\|f\big\|_{\dot{K}_{\bar{p}\mu
				,q}^{\frac{\alpha p}{\bar{p}\mu }}F_{\infty }^{\frac{\bar{s}}{\mu }}}^{d}.
	\end{eqnarray*}%
	Our desired estimate follows by the embedding \eqref{Sobolev-emb3}. The
	proof is complete.
\end{proof}

From Theorem \ref{Key-theorem1} and the fact that $G(t)=|f|^{\mu }\in Lip\mu
,\mu >1$, we immediately arrive at the following result.

\begin{cor}
	Under the hypotheses of Theorem \ref{Key-theorem1}, we have 
	\begin{equation*}
	\big\||f|^{\mu }\big\|_{\dot{K}_{p,q}^{\alpha }F_{\beta }^{s_{\mu }}}\leq c%
	\big\|G\big\|_{Lip\mu }\big\|f\big\|_{\dot{K}_{p,q}^{\alpha }F_{\infty
		}^{s}}^{\mu }
	\end{equation*}%
	holds for any $f\in \mathbb{\dot{K}}_{p,q}^{\alpha }\mathbb{F}_{\infty }^{s}$%
	.
\end{cor}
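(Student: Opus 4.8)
The plan is to obtain this as a direct specialization of Theorem~\ref{Key-theorem1} to the single choice $G(t)=|t|^{\mu}$, $\mu>1$. Once one knows that this $G$ lies in $Lip\mu$ with finite norm $\|G\|_{Lip\mu}$, the displayed inequality is literally the conclusion of Theorem~\ref{Key-theorem1} read off for that $G$, because the remaining hypotheses there ($0<p,q<\infty$, $0<\beta\le\infty$, $\mu>1$, $\alpha\ge0$, $s\ge\frac{n}{p}-\frac{n}{q}$, $0<s<\frac{n}{p}+\alpha$, and $\max(0,\frac{n}{p}+\alpha-n)<s_{\mu}<\mu$) are exactly the ones assumed in the corollary. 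So no new analysis is needed; the only thing to be checked is the membership $|t|^{\mu}\in Lip\mu$, which is also recorded in the remark following the definition of $Lip\mu$.

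To carry out that verification I would write $\mu=L+\nu$ as in the definition of $Lip\mu$ (so $L=\lfloor\mu\rfloor$, $\nu=\mu-L\in(0,1)$ when $\mu\notin\mathbb{N}$, and $L=\mu-1$, $\nu=1$ when $\mu\in\mathbb{N}$), and then differentiate: for $t\neq0$ and $0\le l\le L$ one has $G^{(l)}(t)=c_{l}|t|^{\mu-l}(\operatorname{sgn}t)^{l}$ with $c_{l}=\mu(\mu-1)\cdots(\mu-l+1)$. Since $\mu-l\ge\mu-L=\nu>0$ for all such $l$, each $G^{(l)}$ extends continuously to $t=0$ with value $0$, whence $G\in C^{L,\mathrm{loc}}(\mathbb{R})$ and $G^{(l)}(0)=0$ for $l=0,\dots,L$. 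For the top-order derivative $G^{(L)}(t)=c_{L}|t|^{\nu}(\operatorname{sgn}t)^{L}$, the elementary estimate $\big||a|^{\nu}(\operatorname{sgn}a)^{L}-|b|^{\nu}(\operatorname{sgn}b)^{L}\big|\le 2|a-b|^{\nu}$, valid for $0<\nu\le1$, shows that $G^{(L)}$ is $\nu$-H\"older continuous on $\mathbb{R}$. Hence every supremum appearing in $\|\cdot\|_{Lip\mu}$ is finite, so $\|G\|_{Lip\mu}<\infty$.

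Then I would simply invoke Theorem~\ref{Key-theorem1} with this $G$, which gives
\begin{equation*}
\big\||f|^{\mu}\big\|_{\dot{K}_{p,q}^{\alpha}F_{\beta}^{s_{\mu}}}=\big\|G(f)\big\|_{\dot{K}_{p,q}^{\alpha}F_{\beta}^{s_{\mu}}}\lesssim \big\|G\big\|_{Lip\mu}\big\|f\big\|_{\dot{K}_{p,q}^{\alpha}F_{\infty}^{s}}^{\mu}
\end{equation*}
for every $f\in\mathbb{\dot{K}}_{p,q}^{\alpha}\mathbb{F}_{\infty}^{s}$, which is exactly the asserted bound, with $c$ the implicit constant furnished by Theorem~\ref{Key-theorem1}. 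There is essentially no obstacle here: the whole content has already been absorbed into Theorem~\ref{Key-theorem1}, and the only (trivial) care needed is the bookkeeping point that $\|G\|_{Lip\mu}$, although a fixed finite number for $G(t)=|t|^{\mu}$, is retained on the right-hand side so that the corollary has the same shape as the theorem.
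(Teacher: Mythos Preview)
Your proposal is correct and matches the paper's own reasoning exactly: the paper derives the corollary in one line by noting that $G(t)=|t|^{\mu}\in Lip\mu$ for $\mu>1$ and invoking Theorem~\ref{Key-theorem1}. Your additional explicit verification that $|t|^{\mu}\in Lip\mu$ simply fills in what the paper records as a remark following the definition of $Lip\mu$.
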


\begin{rem}
	The valued $s_{\mu }$ in Theorem \ref{Key-theorem1} is optimal. Indeed, we
	put%
	\begin{equation*}
	f_{\kappa }(x)=\theta (x)|x|^{\kappa },
	\end{equation*}%
	where $\kappa >0$ and $\theta $ is a smooth cut-off function with $\mathrm{%
		supp}\theta \subset \{x:|x|\leq \vartheta \}$, $\vartheta >0$ sufficiently
	small. As in \cite{Dr21.1} we can prove that $f_{\kappa }\in \dot{K}%
	_{p,q}^{\alpha }F_{\beta }^{s}$ if and only if $s<\frac{n}{p}+\alpha +\kappa 
	$. Let $G(x)=|x|^{\mu },\mu >1,x\in \mathbb{R}$. Then 
	\begin{equation*}
	G(f_{\kappa })\notin \dot{K}_{p,q}^{\alpha }F_{\beta }^{d},
	\end{equation*}%
	if $d\geq \frac{n}{p}+\alpha +\kappa \mu >s_{\mu }$.
\end{rem}

\begin{thm}
	\label{Key-theorem1 copy(1)}Let $0<p,q<\infty ,0<\beta \leq \infty ,\mu
	>1,\alpha \geq 0$ and 
	\begin{equation*}
	\max \Big(0,\frac{n}{p}+\alpha -n\Big)<s<\mu .
	\end{equation*}%
	Let $G\in Lip\mu $. Then 
	\begin{equation*}
	\big\|G(f)\big\|_{\dot{K}_{p,q}^{\alpha }F_{\beta }^{s}}\leq c\big\|G\big\|%
	_{Lip\mu }\big\|f\big\|_{\dot{K}_{p,q}^{\alpha }F_{\beta }^{s}}\big\|f\big\|%
	_{\infty }^{\mu -1}
	\end{equation*}%
	holds for any $f\in \mathbb{\dot{K}}_{p,q}^{\alpha }\mathbb{F}_{\beta
	}^{s}\cap \mathbb{L}^{\infty }$.
\end{thm}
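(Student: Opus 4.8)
The plan is to run the argument of Theorem \ref{Key-theorem1} essentially verbatim, but to exploit the boundedness of $f$ so as to avoid the smoothness loss incurred there: every place where Theorem \ref{Key-theorem1} used a Sobolev embedding into a Herz space (which forced the target exponent $s_\mu<s$) will instead be handled by the trivial bound $\sup_k|\psi_k\ast f|\lesssim\|f\|_{\infty}$ (see \eqref{help1}), by Theorem \ref{Multiplication1 copy(1)}, and by the elementary identity $\||g|^{\mu}\|_{\dot{K}_{p,q}^{\alpha}}=\|g\|_{\dot{K}_{p\mu,q\mu}^{\alpha/\mu}}^{\mu}$ for $g\ge 0$, so that we may keep the full smoothness $s$ on the left. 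First I would note that $G(f)\in L^{\infty}\subset\mathcal{S}^{\prime}(\mathbb{R}^{n})$, since $G(0)=0$ and \eqref{G-properties} give $|G(f(x))|\le\|G\|_{Lip\mu}\|f\|_{\infty}^{\mu-1}|f(x)|$; that, after discarding a null set, $|f(x)|<\infty$ for every $x$; and that $f=\sum_{j\ge0}\varphi_j\ast f$ in $L^{\mu}_{\mathrm{loc}}$, since $\psi_k\ast f\to f$ almost everywhere with $|\psi_k\ast f|\lesssim\|f\|_{\infty}$, so that dominated convergence applies (this is what legitimizes the use of Lemmas \ref{Key-lemma1} and \ref{Key-lemma1 copy(1)} below). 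Writing $\mu=L+\nu$ with $0<\nu\le1$ and Taylor-expanding $G$ about $\psi_k\ast f(x)$ exactly as in Step 1 of Theorem \ref{Key-theorem1}, one obtains
\[
\varphi_k\ast G(f)(x)=\sum_{l=0}^{L-1}\sum_{j=0}^{l}H_{k,1,j,l}(x)+H_{k,2}(x),\qquad k\in\mathbb{N}_{0};
\]
since $\varphi_k\ast G(f)$ is the $k$-th Littlewood--Paley block of $G(f)$ and $\alpha\ge0>-\frac{n}{p}$, it suffices to bound $\big\|\big(\sum_k2^{ks\beta}|\,\cdot\,|^{\beta}\big)^{1/\beta}\big\|_{\dot{K}_{p,q}^{\alpha}}$ of each of the two sums.

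For the polynomial terms, \eqref{G-properties} and \eqref{help1} give $|\psi_k\ast f(x)|^{l-j}\,|G^{(l)}(\psi_k\ast f(x))|\lesssim\|G\|_{Lip\mu}\|f\|_{\infty}^{\mu-j}$, hence $|H_{k,1,j,l}(x)|\lesssim\|G\|_{Lip\mu}\|f\|_{\infty}^{\mu-j}\,|\varphi_k\ast f^{j}(x)|$ for $1\le j\le l\le L-1$. Then $\big\|\big(\sum_k2^{ks\beta}|\varphi_k\ast f^{j}|^{\beta}\big)^{1/\beta}\big\|_{\dot{K}_{p,q}^{\alpha}}=\|f^{j}\|_{\dot{K}_{p,q}^{\alpha}F_{\beta}^{s}}\lesssim\|f\|_{\dot{K}_{p,q}^{\alpha}F_{\beta}^{s}}\|f\|_{\infty}^{j-1}$ by Theorem \ref{Multiplication1 copy(1)} (whose hypothesis $s>\max(0,\frac{n}{p}+\alpha-n)$ is exactly the one assumed here), and $\|f\|_{\infty}^{\mu-j}\|f\|_{\infty}^{j-1}=\|f\|_{\infty}^{\mu-1}$. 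For $j=0$ only $k=0$ contributes, since $\int_{\mathbb{R}^{n}}\varphi_k=0$ for $k\ge1$, and there $|H_{0,1,0,l}(x)|\lesssim\|G\|_{Lip\mu}|\varphi_0\ast f(x)|^{\mu}\le\|G\|_{Lip\mu}\|f\|_{\infty}^{\mu-1}|\varphi_0\ast f(x)|$ with $\|\varphi_0\ast f\|_{\dot{K}_{p,q}^{\alpha}}\le\|f\|_{\dot{K}_{p,q}^{\alpha}F_{\beta}^{s}}$. The first piece $H_{k,2,1}$ of the Taylor remainder $H_{k,2}$ — obtained, as in Step 3 of Theorem \ref{Key-theorem1}, by splitting off $G^{(L)}(\psi_k\ast f(x))$ — is again of this product type, now involving $f^{L}$, and is treated the same way via Theorem \ref{Multiplication1 copy(1)}.

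The real work is the genuine remainder $H_{k,2,2}$. From $|G^{(L)}(t_0)-G^{(L)}(t_1)|\le\|G\|_{Lip\mu}|t_0-t_1|^{\nu}$ one gets $|H_{k,2,2}(x,y)|\lesssim\|G\|_{Lip\mu}|f(y)-\psi_k\ast f(x)|^{\mu}\lesssim\|G\|_{Lip\mu}\big(|\psi_k\ast f(x)-f(x)|^{\mu}+|f(x)-f(y)|^{\mu}\big)$, so that $\int|\varphi_k(x-y)|\,|H_{k,2,2}(x,y)|\,dy\le S_{k,1}(f)(x)+S_{k,2}(f)(x)$ with $S_{k,1}(f)\lesssim\|G\|_{Lip\mu}|\psi_k\ast f-f|^{\mu}$ and, using $|\varphi_k|\lesssim\eta_{2^{k},M}$, $S_{k,2}(f)(x)\lesssim\|G\|_{Lip\mu}2^{kn}\sum_{l\ge0}2^{-lM}I^{\mu}_{k-l}(f)(x)$, exactly as in Theorem \ref{Key-theorem1}. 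For $S_{k,1}$, write $f-\psi_k\ast f=\sum_{i>k}\varphi_i\ast f$, use $2^{ks\beta}=(2^{ks/\mu})^{\mu\beta}$ and the identity $\||g|^{\mu}\|_{\dot{K}_{p,q}^{\alpha}}=\|g\|_{\dot{K}_{p\mu,q\mu}^{\alpha/\mu}}^{\mu}$ to reduce to $\big\|\big(\sum_k\big(2^{ks/\mu}\sum_{i>k}|\varphi_i\ast f|\big)^{\mu\beta}\big)^{1/(\mu\beta)}\big\|_{\dot{K}_{p\mu,q\mu}^{\alpha/\mu}}^{\mu}$, apply Lemma \ref{lq-inequality} with $a=2^{-s/\mu}<1$ (valid since $s>0$) in $\ell^{\mu\beta}$, and conclude that $S_{k,1}$ contributes $\lesssim\|G\|_{Lip\mu}\|f\|_{\dot{K}_{p\mu,q\mu}^{\alpha/\mu}F_{\beta\mu}^{s/\mu}}^{\mu}\le\|G\|_{Lip\mu}\|f\|_{\dot{K}_{p,q}^{\alpha}F_{\beta}^{s}}\|f\|_{\infty}^{\mu-1}$. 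For $S_{k,2}$, choose $M>n+s$, use the $d$-triangle inequality with $d=\min(1,p,q,\beta)$ together with an index shift to reduce to $\sum_l2^{-l(M-n-s)d}\big\|\big(\sum_{i\in\mathbb{Z}}2^{(n+s)i\beta}|I^{\mu}_i(f)|^{\beta}\big)^{1/\beta}\big\|_{\dot{K}_{p,q}^{\alpha}}^{d}$, and bound the remaining norm by $\|f\|_{\dot{K}_{p,q}^{\alpha}F_{\beta}^{s}}\|f\|_{\infty}^{\mu-1}$ via Lemma \ref{Key-lemma1 copy(1)} when $s>1$, and, when $\max(0,\frac{n}{p}+\alpha-n)<s\le1$, via Lemma \ref{Key-lemma1} followed by the interpolation-type bound $\|f\|_{\dot{K}_{p\mu,q\mu}^{\alpha/\mu}F_{\beta\mu}^{s/\mu}}^{\mu}\le\|f\|_{\dot{K}_{p,q}^{\alpha}F_{\beta}^{s}}\|f\|_{\infty}^{\mu-1}$; the geometric series $\sum_l2^{-l(M-n-s)d}$ converges. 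Collecting the four contributions yields the claim. The one delicate point is this last reduction to the difference quantity $I^{\mu}_k(f)$ and the verification that the hypotheses of Lemma \ref{Key-lemma1 copy(1)} / Lemma \ref{Key-lemma1} are indeed met throughout the range $\max(0,\frac{n}{p}+\alpha-n)<s<\mu$; everything else is the bookkeeping already performed for Theorem \ref{Key-theorem1}.
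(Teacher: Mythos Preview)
Your argument is correct and essentially the same as the paper's own proof; the only cosmetic differences are that the paper phrases all the estimates via $\sup_{k}2^{ks}|\cdot|$ and handles $S_{k,1}$ by the cruder pointwise bound $|\psi_k\ast f-f|^{\mu}\lesssim\|f\|_{\infty}^{\mu-1}|\psi_k\ast f-f|$ rather than passing to $\dot{K}_{p\mu,q\mu}^{\alpha/\mu}$. Your case split on $s$ for $S_{k,2}$ is unnecessary: Lemma~\ref{Key-lemma1} followed by the bound $\|f\|_{\dot{K}_{p\mu,q\mu}^{\alpha/\mu}F_{\beta\mu}^{s/\mu}}^{\mu}\lesssim\|f\|_{\dot{K}_{p,q}^{\alpha}F_{\beta}^{s}}\|f\|_{\infty}^{\mu-1}$ covers the whole range at once, exactly as the paper does.
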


\begin{proof}
	We employ the notation of Theorem \ref{Key-theorem1}. We will prove that%
	\begin{equation*}
	\Big\|\sup_{k\in \mathbb{N}_{0}}2^{ks}\big|H_{k,1,j,l}+H_{k,2}\big|\Big\|_{%
		\dot{K}_{p,q}^{\alpha }}\lesssim \big\|G\big\|_{Lip\mu }\big\|f\big\|_{\dot{K%
		}_{p,q}^{\alpha }F_{\beta }^{s}}\big\|f\big\|_{\infty }^{\mu -1}.
	\end{equation*}%
	Thanks to \eqref{G-properties} and Theorem \ \ref{Multiplication1 copy(1)} it follows%
	\begin{eqnarray*}
		2^{ks}\big\|H_{k,1,j,l}\big\|_{\dot{K}_{p,q}^{\alpha }} &\lesssim &\big\|%
		|\psi _{k}\ast f|^{l-j}G^{(l)}(|\psi _{k}\ast f|)\big\|_{\infty }2^{ks}\big\|%
		\varphi _{k}\ast f^{j}\big\|_{\dot{K}_{p,q}^{\alpha }} \\
		&\lesssim &\big\|G\big\|_{Lip\mu }\big\|\psi _{k}\ast f\big\|_{\infty }^{\mu
			-j}2^{ks}\big\|\varphi _{k}\ast f^{j}\big\|_{\dot{K}_{p,q}^{\alpha }} \\
		&\lesssim &\big\|G\big\|_{Lip\mu }\big\|f\big\|_{\infty }^{\mu -j}\big\|f^{j}%
		\big\|_{\dot{K}_{p,q}^{\alpha }F_{\infty }^{s}} \\
		&\lesssim &\big\|G\big\|_{Lip\mu }\big\|f\big\|_{\infty }^{\mu -1}\big\|f%
		\big\|_{\dot{K}_{p,q}^{\alpha }F_{\infty }^{s}},
	\end{eqnarray*}%
	where we used $\big\|\psi _{k}\ast f\big\|_{\infty }\lesssim \big\|f\big\|%
	_{\infty }$, by Young's inequality. Now 
	\begin{eqnarray*}
		\big\|H_{0,1,0,l}\big\|_{\dot{K}_{p,q}^{\alpha }} &\lesssim &\big\|G\big\|%
		_{Lip\mu }\big\||\varphi _{0}\ast f|^{\mu }\big\|_{\dot{K}_{p,q}^{\alpha }}
		\\
		&\lesssim &\big\|G\big\|_{Lip\mu }\big\|f\big\|_{\infty }^{\mu -1}\big\|%
		\varphi _{0}\ast f\big\|_{\dot{K}_{p,q}^{\alpha }} \\
		&\lesssim &\big\|G\big\|_{Lip\mu }\big\|f\big\|_{\infty }^{\mu -1}\big\|f%
		\big\|_{\dot{K}_{p,q}^{\alpha }F_{\infty }^{s}}.
	\end{eqnarray*}%
	Observe that%
	\begin{equation*}
	S_{k,1}(f)(x)\lesssim \big\|G\big\|_{Lip\mu }|\psi _{k}\ast f(x)-f(x)|^{\mu
	}\lesssim \big\|G\big\|_{Lip\mu }\big\|f\big\|_{\infty }^{\mu -1}|\psi
	_{k}\ast f(x)-f(x)|.
	\end{equation*}%
	Then%
	\begin{eqnarray*}
		\Big\|\sup_{k\in \mathbb{N}_{0}}\big(2^{ks}S_{k,1}(f)\big)\Big\|_{\dot{K}%
			_{p,q}^{\alpha }} &\lesssim &\big\|G\big\|_{Lip\mu }\big\|f\big\|_{\infty
		}^{\mu -1}\Big\|\sup_{k\in \mathbb{N}_{0}}2^{ks}\Big(\sum_{i=k+1}^{\infty
		}|\varphi _{i}\ast f|\Big)\Big\|_{\dot{K}_{p,q}^{\alpha }} \\
		&\lesssim &\big\|G\big\|_{Lip\mu }\big\|f\big\|_{\infty }^{\mu -1}\Big\|%
		\sup_{k\in \mathbb{N}_{0}}2^{ks}|\varphi _{k}\ast f|\Big\|_{\dot{K}%
			_{p,q}^{\alpha }} \\
		&\lesssim &\big\|G\big\|_{Lip\mu }\big\|f\big\|_{\infty }^{\mu -1}\big\|f%
		\big\|_{\dot{K}_{p,q}^{\alpha }F_{\infty }^{s}},
	\end{eqnarray*}%
	by Lemma \ref{lq-inequality}. Using Lemma \ref{Key-lemma1}, we obtain%
	\begin{eqnarray*}
		&&\Big\|\sup_{k\in \mathbb{N}_{0}}2^{ks}\big|S_{k,2}(f)\big|\Big\|_{\dot{K}%
			_{p,q}^{\alpha }}^{d} \\
		&\lesssim &\big\|G\big\|_{Lip\mu }^{d}\sum_{l=0}^{\infty }2^{-lMd}\Big\|%
		\sup_{k\in \mathbb{N}_{0}}\big(2^{k(n+s)}I_{k-l}^{\mu }(f)\big)\Big\|_{\dot{K%
			}_{p,q}^{\alpha }}^{d} \\
		&\lesssim &\big\|G\big\|_{Lip\mu }^{d}\sum_{l=0}^{\infty }2^{-l(M-n-\bar{s}%
			)d}\Big\|\sup_{i\geq -l}\big(2^{i(n+s)}I_{i}^{\mu }(f)\big)\Big\|_{\dot{K}%
			_{p,q}^{\alpha }}^{d} \\
		&\lesssim &\big\|G\big\|_{Lip\mu }^{d}\big\|f\big\|_{\dot{K}_{p\mu ,q\mu }^{%
				\frac{\alpha }{\mu }}F_{\infty }^{\frac{s}{\mu }}}^{d\mu }.
	\end{eqnarray*}%
	The desired estimate follows by the fact that%
	\begin{equation*}
	\big\|f\big\|_{\dot{K}_{p\mu ,q\mu }^{\frac{\alpha }{\mu }}F_{\infty }^{%
			\frac{s}{\mu }}}^{\mu }\lesssim \big\|f\big\|_{\infty }^{\mu -1}\big\|f\big\|%
	_{\dot{K}_{p,q}^{\alpha }F_{\beta }^{s}}.
	\end{equation*}%
	The proof is completed.
\end{proof}

Now we present some limit case.

\begin{thm}
	\label{Key-theorem2}Let $0<p,q<\infty ,\alpha \geq 0,\mu \geq \frac{\frac{n}{%
			p}+\alpha }{\frac{n}{q}+\alpha +1}$\ and 
	\begin{equation}
	\max \big(1,\frac{n}{p}+\alpha -n\big)<\mu <\frac{n}{p}+\alpha .
	\label{new1}
	\end{equation}%
	Let $G\in Lip\mu $ and%
	\begin{equation}
	s=1+\frac{\mu -1}{\mu }\big(\frac{n}{p}+\alpha \big).  \label{new2}
	\end{equation}%
	Then 
	\begin{equation*}
	\big\|G(f)\big\|_{\dot{K}_{p,q}^{\alpha }F_{\infty }^{\mu }}\leq c\big\|G%
	\big\|_{Lip\mu }\big\|f\big\|_{\dot{K}_{p,q}^{\alpha }F_{\infty }^{s}}^{\mu }
	\end{equation*}%
	holds for any $f\in \mathbb{\dot{K}}_{p,q}^{\alpha }\mathbb{F}_{\infty }^{s}$%
	.
\end{thm}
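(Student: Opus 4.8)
The statement is precisely the endpoint $s_{\mu}=\mu$ of Theorem \ref{Key-theorem1}: solving $s_{\mu}=s-(\mu-1)(\tfrac{n}{p}+\alpha-s)=\mu$ for $s$ gives exactly \eqref{new2}, and \eqref{new1} together with $\mu\ge\frac{\frac{n}{p}+\alpha}{\frac{n}{q}+\alpha+1}$ is what keeps all the auxiliary exponents below in the admissible range. So the plan is to run the three-step scheme of the proof of Theorem \ref{Key-theorem1} with the parameter $\bar{s}=\mu$ frozen at the endpoint value, replacing the one ingredient that degenerates. At the outset I would record the identities $\tfrac{n}{p}+\alpha-s=\tfrac{1}{\mu}(\tfrac{n}{p}+\alpha)-1>0$, $\max(\sigma_{p},\tfrac{n}{p}+\alpha-n)<s<\tfrac{n}{p}+\alpha$ and the equivalence of $\mu\ge\frac{\frac{n}{p}+\alpha}{\frac{n}{q}+\alpha+1}$ with $s\ge\tfrac{n}{p}-\tfrac{n}{q}$; these are exactly the hypotheses under which Theorem \ref{Multiplication1} (applied with $m=j\le L<\mu$), Theorem \ref{embeddings3} and Lemma \ref{Key-lemma2} apply. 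As in Step 1 of Theorem \ref{Key-theorem1}, a Sobolev embedding of $\dot{K}_{p,q}^{\alpha}F_{\infty}^{s}$ into a Herz space with negative weight (possible since $s-\tfrac{n}{p}-\alpha<0$ by \eqref{new1}) shows $f\in L_{\mathrm{loc}}^{\max(1,\mu)}$ with $f=\sum_{j\ge0}\varphi_{j}\ast f$ converging in $L_{\mathrm{loc}}^{\mu}$, so that $G(f)\in\mathcal{S}^{\prime}(\mathbb{R}^{n})$ and the Taylor decomposition makes sense.

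Writing $\mu=L+\nu$ with $0<\nu\le1$ and expanding $G$ by Taylor's formula around $\psi_{k}\ast f(x)=\sum_{i=0}^{k}\varphi_{i}\ast f(x)$ with integral remainder yields, exactly as in Theorem \ref{Key-theorem1}, $\varphi_{k}\ast G(f)(x)=\sum_{l=0}^{L-1}\sum_{j=0}^{l}H_{k,1,j,l}(x)+H_{k,2}(x)$, with $H_{k,2}=H_{k,2,1}+H_{k,2,2}$ and $|H_{k,2,2}(x,y)|\lesssim\|G\|_{Lip\mu}|\psi_{k}\ast f(x)-f(y)|^{\mu}$. Since the target is $\|G(f)\|_{\dot{K}_{p,q}^{\alpha}F_{\infty}^{\mu}}=\|\sup_{k\in\mathbb{N}_{0}}2^{k\mu}|\varphi_{k}\ast G(f)|\|_{\dot{K}_{p,q}^{\alpha}}$, it suffices to control $\sup_{k}2^{k\mu}$ of each piece in $\dot{K}_{p,q}^{\alpha}$. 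A pleasant feature of the endpoint is that the auxiliary integrability index $\bar{p}$ of Theorem \ref{Key-theorem1} collapses to $\bar{p}=p$: with $p_{1}=\frac{n+\alpha p}{(\mu-j)(\frac{n}{p}+\alpha-s)}$ and $p_{2}=\frac{n+\alpha p}{\mu+j(\frac{n}{p}+\alpha-s)}$ one computes $\tfrac{1}{p_{1}}+\tfrac{1}{p_{2}}=\tfrac{1}{p}$, so no change of Herz exponents is needed. Consequently $H_{k,1,j,l}$, $H_{k,2,1}$ and the only surviving term $H_{0,1,0,l}$ (the rest vanish since $\int\varphi_{k}=0$ for $k\ge1$) are estimated verbatim as in Step 2 of Theorem \ref{Key-theorem1}: Hölder's inequality together with $|G^{(l)}(t)|\le\|G\|_{Lip\mu}|t|^{\mu-l}$, the Peetre maximal characterization (Theorem \ref{Peetre maximal function}), the vector-valued maximal inequality (Lemma \ref{Maximal-Inq}), the multiplication estimate (Theorem \ref{Multiplication1}) and the embeddings of Theorem \ref{embeddings3}, finishing for instance $H_{0,1,0,l}$ through $\||\varphi_{0}\ast f|^{\mu}\|_{\dot{K}_{p,q}^{\alpha}}=\|\varphi_{0}\ast f\|_{\dot{K}_{p\mu,q\mu}^{\alpha/\mu}}^{\mu}$ and $\dot{K}_{p,q}^{\alpha}F_{\infty}^{s}\hookrightarrow\dot{K}_{p\mu,q\mu}^{\alpha/\mu}F_{\infty}^{1}\hookrightarrow\dot{K}_{p\mu,q\mu}^{\alpha/\mu}F_{\infty}^{0}$.

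The genuinely new point, and the main obstacle, is the term $S_{k,2}(f)(x)=\|G\|_{Lip\mu}\int_{\mathbb{R}^{n}}|\varphi_{k}(x-y)||f(x)-f(y)|^{\mu}\,dy$ coming from $H_{k,2,2}$. Using $|\varphi_{k}|\lesssim\eta_{2^{k},M}$ and a dyadic splitting of the integration domain one gets $\int|\varphi_{k}(-z)||f(x)-f(x+z)|^{\mu}\,dz\lesssim2^{kn}\sum_{l\ge0}2^{-lM}I_{k-l}^{\mu}(f)(x)$, whence, with $d=\min(1,p)$ and $M$ large,
\begin{equation*}
\Big\|\sup_{k\in\mathbb{N}_{0}}2^{k\mu}|S_{k,2}(f)|\Big\|_{\dot{K}_{p,q}^{\alpha}}^{d}\lesssim\|G\|_{Lip\mu}^{d}\sum_{l\ge0}2^{-l(M-n-\mu)d}\Big\|\sup_{i\in\mathbb{Z}}2^{i(n+\mu)}I_{i}^{\mu}(f)\Big\|_{\dot{K}_{p,q}^{\alpha}}^{d}.
\end{equation*}
Here Lemma \ref{Key-lemma1} is no longer available, since it requires $s<\mu$; instead one invokes its endpoint companion Lemma \ref{Key-lemma2} — legitimate because $\max(\sigma_{p},\tfrac{n}{p}+\alpha-n)<\mu$ by \eqref{new1} and $\alpha\ge0$ — to obtain $\|\sup_{i\in\mathbb{Z}}2^{i(n+\mu)}I_{i}^{\mu}(f)\|_{\dot{K}_{p,q}^{\alpha}}\lesssim\|f\|_{\dot{K}_{p\mu,q\mu}^{\alpha/\mu}F_{1}^{1}}^{\mu}$, and one then closes the estimate with the sharp Sobolev embedding $\dot{K}_{p,q}^{\alpha}F_{\infty}^{s}\hookrightarrow\dot{K}_{p\mu,q\mu}^{\alpha/\mu}F_{1}^{1}$, valid by Theorem \ref{embeddings3} precisely because \eqref{new2} forces the equal differential dimensions $s-\tfrac{n}{p}-\alpha=1-\tfrac{n}{p\mu}-\tfrac{\alpha}{\mu}$ and the fine-index condition $q\le q\mu$ holds since $\mu>1$. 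The companion term $S_{k,1}(f)(x)\lesssim\|G\|_{Lip\mu}|\psi_{k}\ast f(x)-f(x)|^{\mu}$ is handled as before via $f-\psi_{k}\ast f=\sum_{i>k}\varphi_{i}\ast f$, Lemma \ref{lq-inequality} and the same embedding. I expect the only real bookkeeping to be verifying that every auxiliary exponent stays admissible at $\bar{s}=\mu$ and that the two embeddings into $F_{1}^{1}$ and $F_{\infty}^{0}$ have equal, resp.\ correctly ordered, differential dimensions; this is routine but needs care.
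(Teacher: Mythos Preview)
Your proposal is correct and follows essentially the same route as the paper: run the proof of Theorem~\ref{Key-theorem1} at the endpoint $\bar s=s_\mu=\mu$, observe that the auxiliary index collapses to $\bar p=p$, and replace Lemma~\ref{Key-lemma1} by Lemma~\ref{Key-lemma2} in the estimate of $S_{k,2}$. The paper's own proof is in fact much terser than yours---it merely records $\bar s=\mu$, $\bar p=p$, checks that \eqref{new-cond} and \eqref{embtoherz} survive, and then says ``use Lemma~\ref{Key-lemma2} instead of Lemma~\ref{Key-lemma1}''; you have correctly supplied the details it omits, including the verification that $\tfrac{1}{p_1}+\tfrac{1}{p_2}=\tfrac{1}{p}$ and the closing embedding $\dot K_{p,q}^{\alpha}F_\infty^{s}\hookrightarrow \dot K_{p\mu,q\mu}^{\alpha/\mu}F_1^{1}$.
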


\begin{proof}
	We employ the notation of the proof of Theorem \ref{Key-theorem1}. From %
	\eqref{new1} and \eqref{new2}, we obtain $\mu <s<\frac{n}{p}+\alpha $. With
	the help of \eqref{new1} we get \eqref{new-cond}, $\frac{b}{\mu }>1$ and $%
	\alpha _{1}\mu <n-\frac{n\mu }{b}$. Consequently the embedding %
	\eqref{embtoherz} holds. We have $s_{\mu }=\mu $ and we will take $\bar{s}%
	=s_{\mu }$ and $\bar{p}=p$. The proof is very similar as in Theorem \ref%
	{Key-theorem1}, but here we use Lemma \ref{Key-lemma2} instead of Lemma \ref%
	{Key-lemma1}.
\end{proof}

From Theorem \ref{Key-theorem2} and the fact that $G(t)=|f|^{\mu }\in Lip\mu
,\mu >1$, we get the following result:

\begin{cor}
	Under the hypotheses of Theorem \ref{Key-theorem2}, we have 
	\begin{equation*}
	\big\||f|^{\mu }\big\|_{\dot{K}_{p,q}^{\alpha }F_{\infty }^{\mu }}\leq c%
	\big\|G\big\|_{Lip\mu }\big\|f\big\|_{\dot{K}_{p,q}^{\alpha }F_{\infty
		}^{s}}^{\mu }
	\end{equation*}%
	holds for any $f\in \mathbb{\dot{K}}_{p,q}^{\alpha }\mathbb{F}_{\infty }^{s}$%
	.
\end{cor}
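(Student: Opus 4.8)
The plan is to run the argument of the proof of Theorem \ref{Key-theorem1} essentially verbatim, the single new feature being that the endpoint $s_{\mu}=\mu$ forces a switch from Lemma \ref{Key-lemma1} to Lemma \ref{Key-lemma2} in the estimate of the Taylor remainder. First I would record the parameter reductions. Writing $A=\frac{n}{p}+\alpha$, relation \eqref{new2} reads $s=1+\frac{\mu-1}{\mu}A$, so $A-s=\frac{A}{\mu}-1$ and therefore
\[
s_{\mu}=s-(\mu-1)\Big(\tfrac{n}{p}+\alpha-s\Big)=\mu .
\]
From \eqref{new1} one reads off $\mu<s<\frac{n}{p}+\alpha$; in particular $s>\max\big(0,\frac{n}{p}+\alpha-n\big)$ and $s>\frac{n}{p}+\alpha-\frac{n}{\mu}$, which are the smoothness hypotheses needed below. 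Moreover, given \eqref{new2}, the assumption $\mu\ge\frac{\frac{n}{p}+\alpha}{\frac{n}{q}+\alpha+1}$ is exactly equivalent to $s\ge\frac{n}{p}-\frac{n}{q}$, which is the integrability restriction required to apply Theorem \ref{Multiplication1}. Exactly as in Theorem \ref{Key-theorem1}, \eqref{new1} yields \eqref{new-cond}, $\frac{b}{\mu}>1$ and $\alpha_{1}\mu<n-\frac{n\mu}{b}$, hence the embedding \eqref{embtoherz} and, by the same Cauchy-sequence argument, $f=\sum_{j\ge 0}\varphi_{j}\ast f$ in $\dot{K}^{\alpha_{1}-\frac{n}{\mu}+\frac{n}{b}}_{\mu,r}$ and in $L^{\mu}_{\mathrm{loc}}$, so that Lemmas \ref{Key-lemma1} and \ref{Key-lemma2} apply to $f$.

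Next I would take $\bar{s}=s_{\mu}=\mu$ and $\bar{p}=p$, and repeat the decomposition of Theorem \ref{Key-theorem1}: with $\mu=L+\nu$, Taylor-expand $G$ to order $L$ and write $\varphi_{k}\ast G(f)=\sum_{l,j}H_{k,1,j,l}+H_{k,2}$, $H_{k,2}=H_{k,2,1}+H_{k,2,2}$, and $\int|\varphi_{k}(x-y)||H_{k,2,2}(x,y)|\,dy\le S_{k,1}(f)(x)+S_{k,2}(f)(x)$. With this choice of $\bar{s}$, $\bar{p}$ the auxiliary space $\dot{K}^{\alpha p/\bar{p}}_{\bar{p},q}F^{\bar{s}}_{\infty}$ coincides with the target space $\dot{K}^{\alpha}_{p,q}F^{\mu}_{\infty}$, so the second embedding in \eqref{Sobolev-emb1} reduces to the identity. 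The terms $H_{k,1,j,l}$ (for $0<j\le L-1$), $H_{0,1,0,l}$, $H_{k,2,1}$ and $S_{k,1}$ are then handled exactly as in Theorem \ref{Key-theorem1}, via \eqref{G-properties}, H\"{o}lder's inequality, Theorem \ref{Multiplication1} (this is where $s\ge\frac{n}{p}-\frac{n}{q}$ enters), Theorem \ref{Peetre maximal function}, Lemma \ref{lq-inequality} and the embeddings \eqref{Sobolev-emb2}--\eqref{Sobolev-emb3}, each contributing a bound $\lesssim\|G\|_{Lip\mu}\|f\|^{\mu}_{\dot{K}^{\alpha}_{p,q}F^{s}_{\infty}}$.

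The only step needing a modification is the bound for $S_{k,2}$. As in Theorem \ref{Key-theorem1}, $|\varphi_{k}(z)|\lesssim\eta_{2^{k},M}(z)$ together with a dyadic splitting of the integral gives $\int|\varphi_{k}(-z)||f(x)-f(x+z)|^{\mu}\,dz\lesssim 2^{kn}\sum_{l\ge 0}2^{-lM}I^{\mu}_{k-l}(f)(x)$, whence, with $d=\min(1,p)$ and $M$ large enough,
\[
\Big\|\sup_{k\in\mathbb{N}_{0}}2^{k\mu}\,|S_{k,2}(f)|\Big\|^{d}_{\dot{K}^{\alpha}_{p,q}}\lesssim\|G\|^{d}_{Lip\mu}\Big\|\sup_{i\in\mathbb{Z}}2^{(n+\mu)i}\,|I^{\mu}_{i}(f)|\Big\|^{d}_{\dot{K}^{\alpha}_{p,q}} .
\]
Since $\bar{s}=\mu$, the hypothesis $\bar{s}<\mu$ of Lemma \ref{Key-lemma1} fails, so at this point I would invoke Lemma \ref{Key-lemma2} — whose hypothesis $\max(\sigma_{p},\frac{n}{p}+\alpha-n)<\mu$ is provided by \eqref{new1} — to obtain $\big\|\sup_{i}2^{(n+\mu)i}|I^{\mu}_{i}(f)|\big\|_{\dot{K}^{\alpha}_{p,q}}\lesssim\|f\|^{\mu}_{\dot{K}^{\alpha/\mu}_{p\mu,q\mu}F^{1}_{1}}$, and then the Sobolev embedding $\dot{K}^{\alpha}_{p,q}F^{s}_{\infty}\hookrightarrow\dot{K}^{\alpha/\mu}_{p\mu,q\mu}F^{1}_{1}$, valid by Theorem \ref{embeddings3}: its differential-dimension equality $1-\frac{n}{p\mu}-\frac{\alpha}{\mu}=s-\frac{n}{p}-\alpha$ is exactly \eqref{new2}, while $p\le p\mu$, $q\le q\mu$ and $\alpha\ge\frac{\alpha}{\mu}$ follow from $\mu>1$ and $\alpha\ge 0$. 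Assembling the estimates for all the pieces of $\varphi_{k}\ast G(f)$ yields $\big\|\sup_{k}2^{k\mu}|\varphi_{k}\ast G(f)|\big\|_{\dot{K}^{\alpha}_{p,q}}\lesssim\|G\|_{Lip\mu}\|f\|^{\mu}_{\dot{K}^{\alpha}_{p,q}F^{s}_{\infty}}$, which is the claim.

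The hard part is precisely this $S_{k,2}$ estimate: at the borderline $s_{\mu}=\mu$ the square-function (that is, $\ell^{\beta}$-type) bound of Lemma \ref{Key-lemma1} degenerates, and one must fall back on the $\ell^{\infty}$-formulation of Lemma \ref{Key-lemma2}, whose right-hand side lands in the worst Triebel--Lizorkin space $F^{1}_{1}$; the subtle point is that $\dot{K}^{\alpha}_{p,q}F^{s}_{\infty}$ still embeds into $\dot{K}^{\alpha/\mu}_{p\mu,q\mu}F^{1}_{1}$, and it is exactly the prescribed value \eqref{new2} of $s$ — together with the lower bound on $\mu$, which keeps Theorem \ref{Multiplication1} available — that makes the whole bookkeeping close.
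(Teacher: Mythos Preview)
Your argument is correct, but it is far more than the corollary requires: you have essentially reproduced the paper's proof of Theorem~\ref{Key-theorem2} (general $G\in Lip\mu$, endpoint $s_{\mu}=\mu$, substitution of Lemma~\ref{Key-lemma2} for Lemma~\ref{Key-lemma1}), whereas the corollary itself is a one-line specialization. The paper simply observes that $G(t)=|t|^{\mu}\in Lip\mu$ for $\mu>1$ and applies Theorem~\ref{Key-theorem2} directly; everything you wrote is already packaged in that theorem, so the efficient proof is to cite it and record that $t\mapsto|t|^{\mu}$ satisfies the $Lip\mu$ hypothesis.
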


\begin{rem}
	Corresponding statements to Theorems \ref{Key-theorem1}, \ref{Key-theorem1
		copy(1)} and \ref{Key-theorem2} were proved in \cite{RS96} and \cite[Theorem
	6]{Si87} with $\alpha =0$, see also \cite{Ru86}.
\end{rem}

\section{Semilinear parabolic equations in Herz-Triebel-Lizorkin spaces}

\subsection{Heat kernel estimates}

Let $t>0,x\in \mathbb{R}^{n}$\ and\ $f\in \mathcal{S}^{\prime }\mathcal{(}%
\mathbb{R}^{n})$. We put 
\begin{equation*}
e^{t\Delta }f(x)=\mathcal{F}^{-1}(\exp (-t|\xi |^{2})\mathcal{F}f)(x).
\end{equation*}%
Recall that 
\begin{equation*}
g(x)=\mathcal{F}^{-1}(\exp (-t|\xi |^{2}))(x)=(4\pi t)^{-\frac{n}{2}}\exp
(-4t^{-1}|x|^{2}),\quad x\in \mathbb{R}^{n}.
\end{equation*}%
We will give some key estimates of heat kernel $e^{t\Delta }$\ needed in the
proofs of the main statements. First, we estimate the heat kernel $%
e^{t\Delta }$ in Herz-type Triebel-Lizorkin spaces. We follows the arguments
of \cite{FH17} and \cite{Triebel14}. We need the so called molecular and
wavelet characterizations of Herz-type Triebel-Lizorkin spaces.

\begin{defn}
	Let $K,L\in \mathbb{N}_{0}$ and $M>0$. A $K$-times continuous differentiable
	function $\mu $ is called a $[K,L,M]$-molecule concentrated in $Q_{j,m}$ if
	for some $j\in \mathbb{N}_{0}$ and $m\in \mathbb{Z}^{n}$%
	\begin{equation*}
	|D^{\gamma }\mu (x)|\leq 2^{|\gamma |j}(1+2^{j}|x-2^{-j}m|)^{-M},\quad 0\leq
	|\gamma |\leq K
	\end{equation*}%
	and%
	\begin{equation*}
	\int_{\mathbb{R}^{n}}x^{\gamma }\mu (x)dx=0\quad \text{if}\quad 0\leq |\gamma
	|<L,j\in \mathbb{N}.
	\end{equation*}
\end{defn}

Notice that for $L=0$ or $j=0$ there are no moment conditions on $\mu $. If $%
\mu $ is a molecule concentrated in $Q_{j,m}$, then it is denoted $\mu
_{j,m}.$

We introduce the sequence spaces associated with the function spaces $\dot{K}%
_{p,q}^{\alpha }F_{\beta }^{s}$. Let $\alpha ,s\in \mathbb{R},0<p,q<\infty $%
\ and $0<\beta \leq \infty $. We set%
\begin{equation*}
\dot{K}_{p,q}^{\alpha }f_{\beta }^{s}=\{\lambda =\{\lambda _{j,m}\}_{j\in 
	\mathbb{N}_{0},m\in \mathbb{Z}^{n}}\subset \mathbb{C}:\big\|\lambda \big\|_{%
	\dot{K}_{p,q}^{\alpha }f_{\beta }^{s}}<\infty \},
\end{equation*}%
where%
\begin{equation*}
\big\|\lambda \big\|_{\dot{K}_{p,q}^{\alpha }f_{\beta }^{s}}=\Big\|\Big(%
\sum_{j=0}^{\infty }\sum_{m\in \mathbb{Z}^{n}}2^{js\beta }|\lambda
_{j,m}|^{\beta }\chi _{j,m}\Big)^{1/\beta }\Big\|_{\dot{K}_{p,q}^{\alpha }}.
\end{equation*}%
Now we come to the molecule decomposition theorem for $\dot{K}_{p,q}^{\alpha
}F_{\beta }^{s}$ spaces. For the proof, see \cite{Drihem2.13} and \cite{Xu13}%
.

\begin{thm}
	\label{molecule}\textit{Let }$s\in \mathbb{R},0<p,q<\infty ,0<\beta \leq
	\infty $\textit{\ and }$\alpha >-\frac{n}{p}$\textit{. }Furthermore, let $%
	K,L\in \mathbb{N}_{0}$ and let $M>0$ with%
	\begin{equation*}
	L>\sigma _{p,\beta }-s,\text{\quad }K>s\text{ and }M\text{ large enough}.
	\end{equation*}%
	If $a_{j,m}$ are $\left[ K,L,M\right] $-molecules concentrated in $Q_{j,m}$
	and 
	\begin{equation*}
	\lambda =\{\lambda _{j,m}\}_{j\in \mathbb{N}_{0},m\in \mathbb{Z%
		}^{n}}\in \dot{K}_{p,q}^{\alpha }f_{\beta }^{s},
	\end{equation*}%
	then the sum%
	\begin{equation}
	f=\sum_{j=0}^{\infty }\sum_{m\in \mathbb{Z}^{n}}\lambda _{j,m}a_{j,m}
	\label{new-rep}
	\end{equation}%
	converges in $\mathcal{S}^{\prime }(\mathbb{R}^{n})$ and 
	\begin{equation*}
	\big\|f\big\|_{\dot{K}_{p,q}^{\alpha }F_{\beta }^{s}}\lesssim \big\|\lambda %
	\big\|_{\dot{K}_{p,q}^{\alpha }f_{\beta }^{s}}.
	\end{equation*}
\end{thm}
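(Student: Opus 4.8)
The plan is to reduce everything to three ingredients already available: a Taylor-type estimate for the convolution of a single molecule against $\varphi_k$, the Hardy inequality in $\ell_\beta$ (Lemma~\ref{lq-inequality}), and the vector-valued maximal inequality in Herz spaces (Lemma~\ref{Maximal-Inq}). Throughout put $g_j:=\sum_{m}|\lambda_{j,m}|\chi_{j,m}$; since for fixed $j$ the $\chi_{j,m}$ $(m\in\mathbb{Z}^n)$ are pairwise disjoint, $\|\lambda\|_{\dot{K}_{p,q}^{\alpha}f_{\beta}^{s}}=\big\|\big(\sum_{j}2^{js\beta}g_j^{\beta}\big)^{1/\beta}\big\|_{\dot{K}_{p,q}^{\alpha}}$, so it suffices to estimate each $\varphi_k\ast f$ with $f$ given by \eqref{new-rep}.

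\textbf{Step 1: the single-molecule estimate.} First I would prove that there are $N_0>0$ and $M_0>0$ — with $N_0$ as large as desired once $K,L$ are large and $M_0$ as large as desired once $M$ is large — such that
\begin{equation*}
\big|(\varphi_k\ast a_{j,m})(x)\big|\lesssim 2^{-|k-j|N_0}\big(1+2^{\min(j,k)}|x-2^{-j}m|\big)^{-M_0},\qquad j,k\in\mathbb{N}_0,\ m\in\mathbb{Z}^n,\ x\in\mathbb{R}^n.
\end{equation*}
For $k\le j$ (with $j\ge1$; the case $j=0$ forces $k=0$ and is trivial) one subtracts from $\varphi_k(x-\,\cdot\,)$ its degree-$(L-1)$ Taylor polynomial about $2^{-j}m$, using the $L$ vanishing moments of $a_{j,m}$; the $L$-th order remainder together with $|D^{L}\varphi_k|\lesssim 2^{k(n+L)}\eta_{2^{k},M_0}$ and the decay of $a_{j,m}$ gives the gain $2^{(k-j)(n+L)}$ localized at scale $2^{-k}$. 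For $k\ge j$ (so $k\ge1$) one instead subtracts from $a_{j,m}$ its degree-$(K-1)$ Taylor polynomial about $x$, using that $\varphi_k$ has all moments vanishing and the $C^{K}$-bound $|D^{\gamma}a_{j,m}|\lesssim 2^{|\gamma|j}\eta_{2^{j},M_0}$ for $|\gamma|\le K$; this produces the gain $2^{(j-k)K}$ localized at scale $2^{-j}$. Combining gives $N_0=\min(K,n+L)$.

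\textbf{Step 2: resolving the spatial sum.} Summing the estimate of Step~1 over $m$ at fixed $j$ against $2^{ks}|\lambda_{j,m}|$, I use the classical majorization: for $0<r\le1$ and $M_0>n/r$,
\begin{equation*}
\sum_{m}|\lambda_{j,m}|\big(1+2^{\ell}|x-2^{-j}m|\big)^{-M_0}\lesssim 2^{(j-\ell)n/r}\big(\mathcal{M}(g_j^{\,r})(x)\big)^{1/r},\qquad \ell\le j,
\end{equation*}
which is proved by splitting $m$ into dyadic shells around $x$, using $\#\{m:Q_{j,m}\subset B(x,\rho)\}\lesssim (2^{j}\rho)^{n}$ and $\|\cdot\|_{\ell^{1}}\le\|\cdot\|_{\ell^{r}}$. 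Taking $\ell=\min(j,k)$ and tracking the powers of $2^{k-j}$ yields
\begin{equation*}
2^{ks}\,\big|(\varphi_k\ast f)(x)\big|\le\sum_{j=0}^{\infty}2^{ks}\sum_{m}|\lambda_{j,m}|\,\big|(\varphi_k\ast a_{j,m})(x)\big|\lesssim\sum_{j=0}^{\infty}2^{-|k-j|N}\,2^{js}\big(\mathcal{M}(g_j^{\,r})(x)\big)^{1/r},
\end{equation*}
where for $k\ge j$ the relevant exponent is $K-s$ and for $k\le j$ it is $s+n+L-n/r$. Choosing $r<\min(1,p,\beta)$ close enough to $\min(1,p,\beta)$ and then $M$ (hence $M_0$) large enough that $M_0>n/r$, the hypotheses $K>s$ and $L>\sigma_{p,\beta}-s$ make $N:=\min\big(K-s,\ s+n+L-n/r\big)>0$.

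\textbf{Step 3: the Herz quasi-norm, and convergence.} Taking $\ell_\beta$-norms in $k$ pointwise in $x$ and applying Lemma~\ref{lq-inequality} with $a=2^{-N}$ removes the convolution in $j,k$:
\begin{equation*}
\Big(\sum_k 2^{ks\beta}|(\varphi_k\ast f)(x)|^{\beta}\Big)^{1/\beta}\lesssim\Big(\sum_j 2^{js\beta}\big(\mathcal{M}(g_j^{\,r})(x)\big)^{\beta/r}\Big)^{1/\beta}.
\end{equation*}
Now I take the $\dot{K}_{p,q}^{\alpha}$-quasi-norm, raise it to the power $r$, use $\|h\|_{\dot{K}_{p,q}^{\alpha}}=\big\||h|^{r}\big\|_{\dot{K}_{p/r,q/r}^{\alpha r}}^{1/r}$, bring the weights $2^{js}$ inside $\mathcal{M}$ by homogeneity, and apply the vector-valued maximal inequality of Lemma~\ref{Maximal-Inq} in $\dot{K}_{p/r,q/r}^{\alpha r}$ with inner exponent $\beta/r>1$ — the conditions $p/r>1$, $\beta/r>1$ and $-n/(p/r)<\alpha r<n(1-r/p)$ being arranged by the choice of $r$ together with $\alpha>-n/p$. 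This bounds $\|f\|_{\dot{K}_{p,q}^{\alpha}F_{\beta}^{s}}$ by $\big\|\big(\sum_j 2^{js\beta}g_j^{\beta}\big)^{1/\beta}\big\|_{\dot{K}_{p,q}^{\alpha}}=\|\lambda\|_{\dot{K}_{p,q}^{\alpha}f_{\beta}^{s}}$. The case $\beta=\infty$ is identical, with the scalar Herz maximal bound and $\sup_j\mathcal{M}(g_j^{\,r})\le\mathcal{M}(\sup_j g_j^{\,r})$ in place of the vector-valued inequality. Convergence of \eqref{new-rep} in $\mathcal{S}^{\prime}(\mathbb{R}^{n})$ follows from the moment/decay estimates on the molecules (Step~1 with $\varphi_k$ replaced by an arbitrary $\phi\in\mathcal{S}$) together with the a priori bound $|\lambda_{j,m}|\lesssim 2^{j(n/p-s)}\,\|\lambda\|_{\dot{K}_{p,q}^{\alpha}f_{\beta}^{s}}$ read off from the definition of the sequence space, as in the unweighted and weighted cases (cf.\ \cite{Drihem2.13,Xu13}).

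\textbf{Main obstacle.} The crux is Step~1 together with the exponent bookkeeping in Step~2: one must extract enough decay in $|k-j|$ \emph{and} enough spatial decay $M_0$ from a \emph{fixed} molecule, and then pick the auxiliary exponent $r$ simultaneously small enough for the maximal inequalities to apply on the Herz scale and large enough to absorb the loss $2^{(j-k)_{+}n/r}$ incurred when the coarse localization scale $2^{-k}$ is replaced by the cube scale $2^{-j}$ — and it is precisely the moment condition $L>\sigma_{p,\beta}-s$ (with $K>s$ and $M$ large) that makes such a choice of $r$ possible.
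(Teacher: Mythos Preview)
The paper does not give its own proof of Theorem~\ref{molecule}; it simply records the statement and refers to \cite{Drihem2.13} and \cite{Xu13}. Your outline is precisely the standard Frazier--Jawerth argument transported to the Herz scale, and it is the argument one finds in those references: the single-molecule convolution estimate (Step~1), the $r$-trick majorization by $\mathcal{M}_r$ of the coefficient function $g_j$ (Step~2), Hardy in $\ell_\beta$ via Lemma~\ref{lq-inequality}, and the vector-valued maximal inequality of Lemma~\ref{Maximal-Inq} after rescaling the exponents by $1/r$ (Step~3). So at the level of approach there is nothing to contrast --- you have reconstructed the intended proof.

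One point deserves more care. In Step~2 you take $r$ ``close enough to $\min(1,p,\beta)$'' so that $N=\min\big(K-s,\ s+n+L-n/r\big)>0$; in Step~3 you need the \emph{same} $r$ to satisfy $\alpha r<n(1-r/p)$, i.e.\ $r<np/(n+\alpha p)$ when $\alpha>0$, in order for Lemma~\ref{Maximal-Inq} to apply in $\dot{K}^{\alpha r}_{p/r,q/r}$. These two requirements are compatible only if
\[
\frac{n}{s+n+L}<\frac{np}{n+\alpha p},\qquad\text{equivalently}\qquad L>\frac{n}{p}+\alpha-n-s.
\]
Hence for large $\alpha$ (for instance $\alpha\ge n(1-1/p)$ when $p,\beta\ge1$, where $\sigma_{p,\beta}=0$) the bare hypothesis $L>\sigma_{p,\beta}-s$ recorded here is not sufficient to run your Step~3; what is actually needed is $L>\max\big(\sigma_{p,\beta},\,\tfrac{n}{p}+\alpha-n\big)-s$. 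This is not a defect in your strategy --- exactly the same extra restriction surfaces elsewhere in the paper (see the case split $\alpha\gtrless n(1-1/p)$ in the proof of Lemma~\ref{Key-lemma1}) and in the cited sources --- but you should state the constraint on $r$ and $L$ honestly rather than asserting that the upper bound $\alpha r<n(1-r/p)$ can be ``arranged by the choice of $r$ together with $\alpha>-n/p$'' alone.
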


Let $J\in \mathbb{N}$ and $\psi _{F},\psi _{M}\in C^{J}(\mathbb{R})$ be
real-valued compactly supported Daubechies wavelets with%
\begin{equation*}
\mathcal{F}\psi _{F}(0)=(2\pi )^{-\frac{1}{2}},\quad \int_{\mathbb{R}%
}x^{l}\psi _{M}(x)dx=0,\quad l\in \{0,...,J-1\}
\end{equation*}%
and%
\begin{equation*}
\big\|\psi _{F}\big\|_{2}=\big\|\psi _{M}\big\|_{2}=1.
\end{equation*}%
We have that%
\begin{equation*}
\{\psi _{F}(x-m),2^{\frac{j}{2}}\psi _{M}(2^{j}x-m)\}_{j\in \mathbb{N}%
	_{0},m\in \mathbb{Z}^{n}}
\end{equation*}%
is an orthonormal basis in $L^{2}(\mathbb{R})$. This orthonormal basis can
be generalized to the $\mathbb{R}^{n}$ by the usual multiresolution
procedure. Let%
\begin{equation*}
G=\{G_{1},...,G_{n}\}\in G^{0}=\{F,M\}^{n}
\end{equation*}%
which means that $G_{r}$ is either $F$ or $M$. Let%
\begin{equation*}
G=\{G_{1},...,G_{n}\}\in G^{j}=\{F,M\}^{n^{\ast }},\quad j\in \mathbb{N},
\end{equation*}%
where indicates that at least one of the components of $G$ must be an $M$.
Let%
\begin{equation*}
\Psi _{G,m}^{j}(x)=2^{j\frac{n}{2}}\prod_{r=1}^{n}\psi
_{G_{r}}(2^{j}x_{r}-m_{r}),\quad G\in G^{j},m\in \mathbb{Z}^{n},x\in \mathbb{%
	R}^{n},j\in \mathbb{N}_{0}.
\end{equation*}%
Then%
\begin{equation*}
\Psi =\{\Psi _{G,m}^{j}:\quad j\in \mathbb{N}_{0},G\in G^{j},m\in \mathbb{Z}%
^{n}\}
\end{equation*}%
is an orthonormal basis in $L^{2}(\mathbb{R}^{n})$.

Let $\alpha ,s\in \mathbb{R},0<p,q<\infty $\ and $0<\beta \leq \infty $. We
set%
\begin{equation*}
\dot{K}_{p,q}^{\alpha }\tilde{f}_{\beta }^{s}=\{\lambda =\{\lambda
_{j,m}^{G}\}_{j\in \mathbb{N}_{0},G\in G^{j},m\in \mathbb{Z}^{n}}\subset 
\mathbb{C}:\big\|\lambda \big\|_{\dot{K}_{p,q}^{\alpha }\tilde{f}_{\beta
	}^{s}}<\infty \},
\end{equation*}%
where%
\begin{equation*}
\big\|\lambda \big\|_{\dot{K}_{p,q}^{\alpha }\tilde{f}_{\beta }^{s}}=\Big\|%
\Big(\sum_{j=0}^{\infty }\sum_{G\in G^{j}}\sum_{m\in \mathbb{Z}%
	^{n}}2^{js\beta }|\lambda _{j,m}^{G}|^{\beta }\chi _{j,m}\Big)^{1/\beta }%
\Big\|_{\dot{K}_{p,q}^{\alpha }}.
\end{equation*}

\begin{thm}
	\label{wavelet}\textit{Let }$\alpha ,s\in \mathbb{R},0<p,q<\infty ,0<\beta
	\leq \infty $ and $\alpha >-\frac{n}{p}$. Let $\{\Psi _{G,m}^{j}\}$ be the
	wavelet system with $J>\max (\sigma _{p,\beta }-s,s)$. Let $f\in \mathcal{S}%
	^{\prime }\mathcal{(}\mathbb{R}^{n})$. Then $f\in \dot{K}_{p,q}^{\alpha
	}F_{\beta }^{s}$ if and only if%
	\begin{equation}
	f=\sum_{j=0}^{\infty }\sum_{G\in G^{j}}\sum_{m\in \mathbb{Z}^{n}}\lambda
	_{j,m}^{G}2^{-j\frac{n}{2}}\Psi _{G,m}^{j},\quad \lambda \in \dot{K}%
	_{p,q}^{\alpha }\tilde{f}_{\beta }^{s}  \label{representation}
	\end{equation}%
	with unconditional convergence in $\mathcal{S}^{\prime }\mathcal{(}\mathbb{R}%
	^{n})$ and in any space $\dot{K}_{p,q}^{\alpha }F_{\beta }^{\sigma }$ with $%
	\sigma <s$. The representation \eqref{representation} is unique. We have%
	\begin{equation*}
	\lambda _{j,m}^{G}=\lambda _{j,m}^{G}(f)=2^{j\frac{n}{2}}\langle f,\Psi
	_{G,m}^{j}\rangle
	\end{equation*}%
	and%
	\begin{equation*}
	I:\quad f\longmapsto \{\lambda _{j,m}^{G}(f)\}
	\end{equation*}%
	is an isomorphic map from $\dot{K}_{p,q}^{\alpha }F_{\beta }^{s}$ into $\dot{%
		K}_{p,q}^{\alpha }\tilde{f}_{\beta }^{s}$. In particular, it holds 
	\begin{equation*}
	\big\|f\big\|_{\dot{K}_{p,q}^{\alpha }F_{\beta }^{s}}\approx \big\|\lambda %
	\big\|_{\dot{K}_{p,q}^{\alpha }\tilde{f}_{\beta }^{s}}.
	\end{equation*}
\end{thm}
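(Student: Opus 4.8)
The plan is to deduce the wavelet characterization from the already-available molecular decomposition (Theorem~\ref{molecule}) together with the Peetre maximal function characterization (Theorem~\ref{Peetre maximal function}), following the standard Triebel-type scheme adapted to the Herz setting. First I would check that for each fixed $j$ and $G\in G^{j}$ the rescaled wavelet $2^{-j n/2}\Psi_{G,m}^{j}$ is, up to a fixed multiplicative constant, a $[K,L,M]$-molecule concentrated in $Q_{j,m}$ in the sense of the Definition preceding Theorem~\ref{molecule}: the compact support and $C^{J}$-smoothness of $\psi_{F},\psi_{M}$ give the derivative bounds $|D^{\gamma}(2^{-jn/2}\Psi_{G,m}^{j})(x)|\lesssim 2^{|\gamma|j}(1+2^{j}|x-2^{-j}m|)^{-M}$ for every $M>0$ and all $|\gamma|\le K\le J$, while for $j\ge 1$ the vanishing-moment hypothesis $\int x^{l}\psi_{M}(x)\,dx=0$ for $l\in\{0,\dots,J-1\}$ (combined with the fact that at least one factor is of type $M$) yields $\int_{\mathbb{R}^{n}}x^{\gamma}\Psi_{G,m}^{j}(x)\,dx=0$ for $|\gamma|<L\le J$. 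Because of the assumption $J>\max(\sigma_{p,\beta}-s,s)$ we may choose $K,L$ with $L>\sigma_{p,\beta}-s$ and $K>s$, so Theorem~\ref{molecule} applies and gives, for any $\lambda\in\dot{K}_{p,q}^{\alpha}\tilde{f}_{\beta}^{s}$, that the series \eqref{representation} converges in $\mathcal{S}^{\prime}(\mathbb{R}^{n})$ to some $f\in\dot{K}_{p,q}^{\alpha}F_{\beta}^{s}$ with $\|f\|_{\dot{K}_{p,q}^{\alpha}F_{\beta}^{s}}\lesssim\|\lambda\|_{\dot{K}_{p,q}^{\alpha}\tilde{f}_{\beta}^{s}}$; summing separately over the finitely many $G\in G^{j}$ absorbs the extra index.

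For the converse direction I would show that if $f\in\dot{K}_{p,q}^{\alpha}F_{\beta}^{s}$ then the wavelet coefficients $\lambda_{j,m}^{G}(f)=2^{jn/2}\langle f,\Psi_{G,m}^{j}\rangle$ satisfy $\|\lambda(f)\|_{\dot{K}_{p,q}^{\alpha}\tilde{f}_{\beta}^{s}}\lesssim\|f\|_{\dot{K}_{p,q}^{\alpha}F_{\beta}^{s}}$. The key pointwise estimate is the classical one: for $x\in Q_{j,m}$,
\begin{equation*}
|\lambda_{j,m}^{G}(f)|\lesssim \sum_{\nu=0}^{\infty}2^{-|\nu-j|N}\,(\varphi_{\nu}^{\ast,a}f)(x),
\end{equation*}
obtained by expanding $\langle\varphi_{\nu}\ast f,\Psi_{G,m}^{j}\rangle$, using the moment conditions of the wavelet when $\nu>j$ and the moment conditions (band-limitedness) of $\varphi_{\nu}$ when $\nu<j$, with $N$ as large as the smoothness $J$ and the decay $M$ allow (in particular $N>a+n/\min(p,\beta)$). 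Plugging this into the definition of $\|\cdot\|_{\dot{K}_{p,q}^{\alpha}\tilde{f}_{\beta}^{s}}$, carrying the factor $2^{js}$ inside, applying Lemma~\ref{lq-inequality} to handle the convolution $\sum_{\nu}2^{-|\nu-j|N}2^{(\nu-j)s}(\cdots)$ in $\ell^{\beta}_{j}$, and then invoking Theorem~\ref{Peetre maximal function} gives the bound by $\|f\|^{\star}_{\dot{K}_{p,q}^{\alpha}F_{\beta}^{s}}\approx\|f\|_{\dot{K}_{p,q}^{\alpha}F_{\beta}^{s}}$; here $a$ is picked with $a>n/\min(p,\beta)$ so that Theorem~\ref{Peetre maximal function} is available. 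Combined with the first part this already gives the norm equivalence $\|f\|_{\dot{K}_{p,q}^{\alpha}F_{\beta}^{s}}\approx\|\lambda\|_{\dot{K}_{p,q}^{\alpha}\tilde{f}_{\beta}^{s}}$ and shows $I$ is bounded with bounded inverse.

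It remains to address uniqueness of the representation, the identification $\lambda_{j,m}^{G}=2^{jn/2}\langle f,\Psi_{G,m}^{j}\rangle$, and the unconditional convergence claims. Uniqueness and the coefficient formula follow from the orthonormality of $\Psi=\{\Psi_{G,m}^{j}\}$ in $L^{2}(\mathbb{R}^{n})$: if $f$ is given by \eqref{representation} then, since the series converges in $\mathcal{S}^{\prime}$ and the $\Psi_{G',m'}^{j'}$ lie in $\mathcal{S}$, pairing termwise yields $\langle f,\Psi_{G',m'}^{j'}\rangle=2^{-j'n/2}\lambda_{j',m'}^{G'}$. For unconditional convergence in $\mathcal{S}^{\prime}$ and in each $\dot{K}_{p,q}^{\alpha}F_{\beta}^{\sigma}$ with $\sigma<s$, I would argue that the partial sums over finite index sets are Cauchy: applying the already-proven ``synthesis'' bound to the tail sequence and using that, for $\sigma<s$, the $\dot{K}_{p,q}^{\alpha}\tilde f_{\beta}^{\sigma}$ norm of a tail of a $\dot{K}_{p,q}^{\alpha}\tilde f_{\beta}^{s}$ sequence tends to zero (dominated-convergence in the discrete Herz--Triebel--Lizorkin sequence space, using $\alpha>-n/p$ so that this sequence space behaves well), gives convergence independent of ordering; the $\mathcal{S}^{\prime}$ convergence then follows from $\dot{K}_{p,q}^{\alpha}F_{\beta}^{\sigma}\hookrightarrow\mathcal{S}^{\prime}$. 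The main technical obstacle is the first converse step --- establishing the pointwise coefficient estimate in terms of the Peetre maximal function with a large enough decay exponent $N$ and then correctly transferring the resulting almost-diagonal/convolution sum through the Herz quasi-norm via Lemma~\ref{lq-inequality} and Theorem~\ref{Peetre maximal function}; everything else is bookkeeping on top of Theorems~\ref{molecule} and~\ref{Peetre maximal function}.
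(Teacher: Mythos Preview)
The paper does not give its own proof of this theorem: immediately after the statement it simply says ``For the proof, see again, \cite{Drihem2.13} and \cite{Xu13}.'' Your plan --- synthesis via Theorem~\ref{molecule} (wavelets are $[K,L,M]$-molecules) and analysis via a pointwise coefficient bound in terms of the Peetre maximal function combined with Lemma~\ref{lq-inequality} and Theorem~\ref{Peetre maximal function} --- is exactly the standard Triebel-type scheme that those references carry out in the Herz setting, so the approach is correct and matches what the cited proofs do.

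One genuine technical slip: in your uniqueness step you assert that ``the $\Psi_{G',m'}^{j'}$ lie in $\mathcal{S}$'' and pair termwise. Daubechies wavelets are compactly supported $C^{J}$ functions, not $C^{\infty}$, so $\Psi_{G,m}^{j}\notin\mathcal{S}(\mathbb{R}^{n})$ and the duality $\langle f,\Psi_{G,m}^{j}\rangle$ for a general $f\in\mathcal{S}'(\mathbb{R}^{n})$ is not automatic. The standard fix (used in the cited references and in Triebel's books) is to interpret $\langle f,\Psi_{G,m}^{j}\rangle$ as a local mean: the condition $J>\max(\sigma_{p,\beta}-s,s)$ guarantees that $2^{-jn/2}\Psi_{G,m}^{j}$ is an admissible kernel in the local-means characterization of $\dot{K}_{p,q}^{\alpha}F_{\beta}^{s}$, so the pairing is well defined on the space and the termwise computation goes through. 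With that adjustment your argument is complete.
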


For the proof, see again, \cite{Drihem2.13} and \cite{Xu13}. To estimate the
heat kernel $e^{t\Delta }$ in Herz-type Triebel-Lizorkin spaces, we need the
following lemma.

\begin{lem}
	\label{Heat-kernel3.1}Let $s>0,\theta \geq 0,0<t<T,0<p,q<\infty ,0<\beta
	\leq \infty $ and $\alpha >-\frac{n}{p}$. We set%
	\begin{equation*}
	b_{G,m}^{j}(x,t)=2^{-j\frac{n}{2}}e^{t\Delta }\Psi _{G,m}^{j}(x).
	\end{equation*}%
	Then there exists $C>0$ such that the functions%
	\begin{equation}
	b_{G,m}^{j}(x,t)_{\theta }=C2^{j\theta }t^{\frac{\theta }{2}%
	}b_{G,m}^{j}(x,t),\quad j\in \mathbb{N}_{0},G\in G^{\ast },m\in \mathbb{Z}%
	^{n}  \label{molecule1}
	\end{equation}%
	$\left[ K,L,M\right] $-molecules for any fixed $t$ with $2^{j}t^{\frac{1}{2}%
	}\geq 1$, provided that $L\leq J,K\leq J,L+n-1<M<J+n-\theta $ and $\theta
	\leq J-L+1$. Assume that%
	\begin{equation*}
	J>\theta +\max (s,\sigma _{p,\beta }).
	\end{equation*}%
	Then, the numbers $K,L,M$ can be chosen such that for some $C>0$ and any $t$
	with $2^{j}t^{\frac{1}{2}}\geq 1$, such that \eqref{molecule1} are molecules
	for $\dot{K}_{p,q}^{\alpha }F_{\beta }^{s+\theta }$.
\end{lem}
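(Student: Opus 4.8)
The plan is to verify directly that the rescaled functions $b_{G,m}^{j}(x,t)_\theta$ defined in \eqref{molecule1} satisfy the two defining conditions of a $[K,L,M]$-molecule concentrated in $Q_{j,m}$, namely the derivative decay estimate and the moment conditions, and then to invoke the molecule decomposition Theorem \ref{molecule} to read off the claimed conclusion. The key point is that $e^{t\Delta}$ acting on a Daubechies wavelet $\Psi_{G,m}^j$ produces a smooth, rapidly-decaying bump essentially concentrated where $\Psi_{G,m}^j$ lived, with an extra smoothing of order $2^{j}t^{1/2}$ in each derivative; since $2^{j}t^{1/2}\ge 1$ by hypothesis, this extra factor is harmless and in fact helps. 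First I would record the scaling: $e^{t\Delta}\Psi_{G,m}^j(x) = (g_t * \Psi_{G,m}^j)(x)$ where $g_t(x)=(4\pi t)^{-n/2}e^{-|x|^2/(4t)}$, and differentiate under the convolution, moving derivatives onto the wavelet factor. Because $\psi_F,\psi_M\in C^J(\mathbb{R})$ are compactly supported and $\Psi_{G,m}^j$ has $L^2$-normalization $2^{jn/2}\prod_r\psi_{G_r}(2^jx_r-m_r)$, one gets $|D^\gamma \Psi_{G,m}^j(x)|\lesssim 2^{jn/2}2^{j|\gamma|}\chi_{\mathrm{supp}}$ for $|\gamma|\le J$; convolving with $g_t$, using that $\|g_t\|_1=1$ and the Gaussian tail, yields the bound $|D^\gamma b_{G,m}^j(x,t)|\lesssim 2^{j|\gamma|}(1+2^j|x-2^{-j}m|)^{-M}$ after multiplying by the normalization $2^{-jn/2}$ and accommodating the polynomial loss coming from the heat kernel spread, which is where the constraint $M<J+n-\theta$ and the factor $2^{j\theta}t^{\theta/2}$ enter. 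This establishes the first molecule condition for $0\le|\gamma|\le K$ with $K\le J$.

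Next I would handle the moment conditions. For $j\in\mathbb{N}$ at least one component $G_r$ equals $M$, and $\psi_M$ has vanishing moments up to order $J-1$ by construction. The crucial observation is that $e^{t\Delta}$ commutes with translations and with the Fourier transform, so $\mathcal{F}(e^{t\Delta}\Psi_{G,m}^j)(\xi)=e^{-t|\xi|^2}\mathcal{F}\Psi_{G,m}^j(\xi)$; since $\mathcal{F}\Psi_{G,m}^j$ vanishes to order $\ge J$ at the origin in the $r$-th variable (inherited from $\psi_M$) and the Gaussian multiplier $e^{-t|\xi|^2}$ is smooth and equals $1$ with all derivatives vanishing appropriately at $\xi=0$ only in the constant term — more carefully, $e^{-t|\xi|^2}=1+O(|\xi|^2)$, so it preserves vanishing to order at least... here one must be slightly careful: $e^{-t|\xi|^2}$ does \emph{not} vanish at the origin, so the product $e^{-t|\xi|^2}\mathcal{F}\Psi_{G,m}^j(\xi)$ vanishes to exactly the same order as $\mathcal{F}\Psi_{G,m}^j$. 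Hence $\int x^\gamma b_{G,m}^j(x,t)\,dx=0$ for $0\le|\gamma|<L$ provided $L\le J$, which is exactly the assumed constraint. The interplay with $\theta$ in the condition $\theta\le J-L+1$ comes from needing enough spare smoothness/decay after the rescaling, and I would track it through the $M$-range inequality $L+n-1<M<J+n-\theta$.

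Finally, for the last assertion, given $J>\theta+\max(s,\sigma_{p,\beta})$ I would choose $L$ and $K$ to satisfy the hypotheses of Theorem \ref{molecule} for the target space $\dot{K}_{p,q}^{\alpha}F_\beta^{s+\theta}$: take $K$ with $s+\theta<K\le J$ (possible since $K=\lfloor s+\theta\rfloor+1\le J$ by the hypothesis $J>\theta+s$), take $L$ with $\sigma_{p,\beta}-(s+\theta)<L\le J$ and $L+n-1<M$, and then pick $M$ in the admissible window $\max(L+n-1,\text{(decay needed)})<M<J+n-\theta$, which is nonempty precisely because $J>\theta+\sigma_{p,\beta}$ leaves room. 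One checks that all of $L\le J$, $K\le J$, $L+n-1<M<J+n-\theta$, and $\theta\le J-L+1$ can then be met simultaneously. I expect the main obstacle to be the bookkeeping in the derivative estimate: carefully tracking how the polynomial decay rate $M$ degrades when the Gaussian is convolved against a compactly supported $C^J$ function and then rescaled by $2^{j\theta}t^{\theta/2}$, and confirming that the window for $M$ is genuinely nonempty under the stated constraints. The moment conditions and the final parameter selection are comparatively routine once the scaling is set up correctly; the analogous computation appears in \cite{FH17} and \cite{Triebel14}, and I would follow their argument, adapting the ambient space from (weighted) Triebel--Lizorkin spaces to $\dot{K}_{p,q}^{\alpha}F_\beta^{s}$, which changes nothing in the pointwise molecule estimates.
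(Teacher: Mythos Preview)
Your proposal is correct and follows essentially the same approach as the paper: the first part (the pointwise molecule estimates and moment conditions for $b_{G,m}^{j}(\cdot,t)_{\theta}$) is deferred to \cite[Proposition~3.1]{FH17}, and the second part is a parameter selection---the paper takes $L=\lfloor\sigma_{p,\beta}\rfloor+1$, then $M$ with $\sigma_{p,\beta}+n<M<J+n-\theta$, and $K$ with $s+\theta<K\le J$, which is exactly the kind of choice you outline. Your sketch of part one is more detailed than the paper's (which simply cites \cite{FH17}), but the argument and the underlying mechanism are the same.
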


\begin{proof}
	We use the arguments of \cite[Proposition 3.1]{FH17} and we need only to prove
	the second part of the Lemma. Let $L=\lfloor \sigma _{p,\beta }\rfloor +1$,
	which yields that $L>\sigma _{p,\beta }-s-\theta $. Since $J>\sigma
	_{p,\beta }$ it follows that $J\geq L$. Hence%
	\begin{equation*}
	\int_{\mathbb{R}^{n}}x^{\nu }b_{G,m}^{j}(x,t)_{\theta }dx=0,\quad \quad
	0\leq |\nu |<L,j\in \mathbb{N}.
	\end{equation*}%
	Let $M$ large enough be such that $\sigma _{p,\beta }+n<M<J+n-\theta $. Then 
	$M>L+n-1$ and $\theta <J-\sigma _{p,\beta }<J-L+1$. Regarding the
	derivatives of $b_{G,m}^{j}(x,t)_{\theta }$ we claim $s+\theta <K\leq J$.
\end{proof}

We present one of the main tools used in this section.

\begin{lem}
	\label{Heat-kernel3}Let $s>0,\theta \geq 0,0<t<T,1<p,q<\infty ,1<\beta \leq
	\infty $ and $-\frac{n}{p}<\alpha< n-\frac{n}{p}$. \textit{Then there exists a positive
		constant }$C(T)>0$\textit{\ independent of }$t$\textit{\ such that}%
	\begin{equation*}
	\big\|e^{t\Delta }f\big\|_{\dot{K}_{p,q}^{\alpha }F_{\beta }^{s+\theta
	}}\leq C(T)t^{-\frac{\theta }{2}}\big\|f\big\|_{\dot{K}_{p,q}^{\alpha
		}F_{\beta }^{s}}
	\end{equation*}%
	for any $f\in \dot{K}_{p,q}^{\alpha }F_{\beta }^{s}$.
\end{lem}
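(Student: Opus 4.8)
The plan is to pass to the wavelet decomposition of Theorem~\ref{wavelet}, to recognise the images of the wavelets under $e^{t\Delta }$ as (rescaled) molecules by Lemma~\ref{Heat-kernel3.1}, and then to read off the claim from the molecular decomposition theorem, Theorem~\ref{molecule}. Fix a Daubechies system $\{\Psi _{G,m}^{j}\}$ with smoothness parameter $J$ chosen large relative to $s,\theta ,p,q,\beta $. For $f\in \dot{K}_{p,q}^{\alpha }F_{\beta }^{s}$, Theorem~\ref{wavelet} gives
\[
f=\sum_{j=0}^{\infty }\sum_{G\in G^{j}}\sum_{m\in \mathbb{Z}^{n}}\lambda _{j,m}^{G}\,2^{-j\frac{n}{2}}\Psi _{G,m}^{j},\qquad \big\|\{\lambda _{j,m}^{G}\}\big\|_{\dot{K}_{p,q}^{\alpha }\tilde{f}_{\beta }^{s}}\approx \big\|f\big\|_{\dot{K}_{p,q}^{\alpha }F_{\beta }^{s}},
\]
with convergence in $\mathcal{S}^{\prime }(\mathbb{R}^{n})$; applying the continuous operator $e^{t\Delta }$ termwise,
\[
e^{t\Delta }f=\sum_{j,G,m}\lambda _{j,m}^{G}\,b_{G,m}^{j}(\cdot ,t),\qquad b_{G,m}^{j}(x,t)=2^{-j\frac{n}{2}}e^{t\Delta }\Psi _{G,m}^{j}(x).
\]
The idea is to split this series into the part over $\{(j,G,m):2^{j}t^{1/2}\geq 1\}$ and the part over $\{(j,G,m):2^{j}t^{1/2}<1\}$ and to estimate each in $\dot{K}_{p,q}^{\alpha }F_{\beta }^{s+\theta }$. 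Throughout one uses that $p,q,\beta >1$ forces $\sigma _{p}=\sigma _{p,q}=\sigma _{p,\beta }=0$, so that in Theorems~\ref{molecule} and~\ref{Heat-kernel3.1} the moment exponent may be taken $L=0$ (no moment conditions), the only remaining constraint being $K>s+\theta $ together with $M$ large --- both arranged by taking $J$ large; the hypothesis $\alpha >-\frac{n}{p}$ is what makes Theorems~\ref{wavelet} and~\ref{molecule} applicable.

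For the part with $2^{j}t^{1/2}\geq 1$, Lemma~\ref{Heat-kernel3.1} supplies $C>0$ and admissible $K,L,M$, independent of $t$, such that $b_{G,m}^{j}(x,t)_{\theta }=C2^{j\theta }t^{\theta /2}b_{G,m}^{j}(x,t)$ is a $[K,L,M]$-molecule for $\dot{K}_{p,q}^{\alpha }F_{\beta }^{s+\theta }$ concentrated in $Q_{j,m}$. Writing $\lambda _{j,m}^{G}b_{G,m}^{j}(\cdot ,t)=(C^{-1}2^{-j\theta }t^{-\theta /2}\lambda _{j,m}^{G})\,b_{G,m}^{j}(\cdot ,t)_{\theta }$ and letting $\nu $ be the sequence with entries $C^{-1}2^{-j\theta }t^{-\theta /2}\lambda _{j,m}^{G}$ for $2^{j}t^{1/2}\geq 1$ and $0$ otherwise, Theorem~\ref{molecule} gives
\[
\Big\|\sum_{2^{j}t^{1/2}\geq 1}\sum_{G,m}\lambda _{j,m}^{G}b_{G,m}^{j}(\cdot ,t)\Big\|_{\dot{K}_{p,q}^{\alpha }F_{\beta }^{s+\theta }}\lesssim \big\|\nu \big\|_{\dot{K}_{p,q}^{\alpha }\tilde{f}_{\beta }^{s+\theta }}.
\]
Since $2^{j(s+\theta )\beta }|\nu _{j,m}^{G}|^{\beta }=C^{-\beta }t^{-\theta \beta /2}\,2^{js\beta }|\lambda _{j,m}^{G}|^{\beta }$ on the summation range, the right-hand side is $\lesssim t^{-\theta /2}\|\{\lambda _{j,m}^{G}\}\|_{\dot{K}_{p,q}^{\alpha }\tilde{f}_{\beta }^{s}}\approx t^{-\theta /2}\|f\|_{\dot{K}_{p,q}^{\alpha }F_{\beta }^{s}}$.

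For the part with $2^{j}t^{1/2}<1$, one shows that a fixed dilate $C^{\prime }b_{G,m}^{j}(\cdot ,t)$ is itself a $[K,L,M]$-molecule for $\dot{K}_{p,q}^{\alpha }F_{\beta }^{s+\theta }$ (with $M$ as large as one pleases and $L=0$), uniformly in such $j$ and in $t$. Indeed $b_{G,m}^{j}(\cdot ,t)=2^{-j\frac{n}{2}}g_{t}\ast \Psi _{G,m}^{j}$ with $g_{t}(x)=(4\pi t)^{-n/2}e^{-|x|^{2}/4t}$; as $\Psi _{G,m}^{j}$ is supported in a ball of radius $\sim 2^{-j}$ about $2^{-j}m$ with $\|D^{\gamma }\Psi _{G,m}^{j}\|_{\infty }\lesssim 2^{j\frac{n}{2}}2^{j|\gamma |}$ for $|\gamma |\leq J$, and since $t^{1/2}\lesssim 2^{-j}$ here, the Gaussian lives at a scale no coarser than $2^{-j}$; splitting the convolution integral over $\{|y-2^{-j}m|\lesssim 2^{-j}\}$ and using the Gaussian tail yields $|D^{\gamma }b_{G,m}^{j}(x,t)|\lesssim 2^{j|\gamma |}(1+2^{j}|x-2^{-j}m|)^{-M}$ for $|\gamma |\leq K$ and every $M$ (the vanishing moments, if demanded, being inherited from $\Psi _{G,m}^{j}$ since convolution with $g_{t}$ preserves moments, but $L=0$ makes this moot). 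Hence Theorem~\ref{molecule} gives
\[
\Big\|\sum_{2^{j}t^{1/2}<1}\sum_{G,m}\lambda _{j,m}^{G}b_{G,m}^{j}(\cdot ,t)\Big\|_{\dot{K}_{p,q}^{\alpha }F_{\beta }^{s+\theta }}\lesssim \big\|\lambda ^{\mathrm{lo}}\big\|_{\dot{K}_{p,q}^{\alpha }\tilde{f}_{\beta }^{s+\theta }},
\]
where $\lambda ^{\mathrm{lo}}$ keeps the entries with $2^{j}t^{1/2}<1$ and zeroes the rest. On the support of $\lambda ^{\mathrm{lo}}$ one has $2^{j\theta }<t^{-\theta /2}$, whence $\|\lambda ^{\mathrm{lo}}\|_{\dot{K}_{p,q}^{\alpha }\tilde{f}_{\beta }^{s+\theta }}\leq t^{-\theta /2}\|\lambda ^{\mathrm{lo}}\|_{\dot{K}_{p,q}^{\alpha }\tilde{f}_{\beta }^{s}}\leq t^{-\theta /2}\|\{\lambda _{j,m}^{G}\}\|_{\dot{K}_{p,q}^{\alpha }\tilde{f}_{\beta }^{s}}\approx t^{-\theta /2}\|f\|_{\dot{K}_{p,q}^{\alpha }F_{\beta }^{s}}$. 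Adding the two contributions yields the asserted inequality with a constant independent of $t\in (0,T)$.

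The two regimes require genuinely different normalisations: in the high-frequency regime it is the parabolically rescaled functions $2^{j\theta }t^{\theta /2}b_{G,m}^{j}(\cdot ,t)$ that are molecules (Lemma~\ref{Heat-kernel3.1}), while in the low-frequency regime the unrescaled $b_{G,m}^{j}(\cdot ,t)$ already are; correspondingly the gain $t^{-\theta /2}$ comes once from the molecular scaling and once merely from the elementary bound $2^{j\theta }<t^{-\theta /2}$, valid precisely when $2^{j}t^{1/2}<1$. I expect the main technical point to be the low-frequency molecular estimate, namely checking that $e^{t\Delta }$ does not degrade the localisation and decay of the compactly supported $\Psi _{G,m}^{j}$ when $2^{j}t^{1/2}<1$, for which it is decisive that the heat kernel then concentrates at a scale no larger than the wavelet's own scale $2^{-j}$. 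Once this (and the observation that $p,q,\beta >1$ removes the moment conditions) is in place, the rest is bookkeeping with the sequence quasi-norms.
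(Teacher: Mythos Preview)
Your argument is correct. Both you and the paper split the wavelet expansion at the scale $2^{j}t^{1/2}\approx 1$ and treat the high-frequency part ($2^{j}t^{1/2}\geq 1$) identically, via Lemma~\ref{Heat-kernel3.1} and Theorem~\ref{molecule}. The low-frequency part is handled differently: the paper bounds $|e^{t\Delta}(\varphi_{j}\ast f_{1,k})|$ pointwise by $\mathcal{M}(\varphi_{j}\ast f_{1,k})$ and invokes the vector-valued maximal inequality (Lemma~\ref{Maximal-Inq}), then uses the wavelet isomorphism (Theorem~\ref{wavelet}) to pass from $s$ to $s+\theta$ at the cost of $2^{k\theta}\approx t^{-\theta/2}$. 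You instead verify directly that for $2^{j}t^{1/2}<1$ the heated wavelet $b_{G,m}^{j}(\cdot,t)$ is itself, up to a uniform constant, a $[K,0,M]$-molecule concentrated in $Q_{j,m}$, and apply Theorem~\ref{molecule} a second time; the factor $t^{-\theta/2}$ then comes from the elementary bound $2^{j\theta}<t^{-\theta/2}$ on the coefficient side. Your verification of the molecular bounds is sound: with $t^{1/2}\lesssim 2^{-j}$ the Gaussian concentrates at scale $\leq 2^{-j}$, so convolution against the compactly supported $\Psi_{G,m}^{j}$ preserves localisation at scale $2^{-j}$ with arbitrary polynomial decay.

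Your route is somewhat more economical in that it stays entirely within the molecular/wavelet framework and never appeals to Lemma~\ref{Maximal-Inq}. As a side effect, your argument does not actually use the upper restriction $\alpha<n-\tfrac{n}{p}$; that hypothesis enters the paper's proof precisely through Lemma~\ref{Maximal-Inq}, so you in fact establish the estimate for all $\alpha>-\tfrac{n}{p}$. The paper's route, on the other hand, isolates the model-independent fact that $e^{t\Delta}$ is bounded on $\dot K_{p,q}^{\alpha}F_{\beta}^{\sigma}$ for every $\sigma$ (via pointwise domination by the maximal function), which is useful on its own.
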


\begin{proof}
	Let $k\in \mathbb{N}$ be such that $2^{-2k}<\frac{t}{T}\leq 2^{-2(k-1)}$.
	From Theorem \ref{wavelet} we have $f=f_{1,k}+f_{2,k}$, with%
	\begin{equation*}
	f_{1,k}=\sum_{j=0}^{k-1}\sum_{G\in G^{j}}\sum_{m\in \mathbb{Z}^{n}}\lambda
	_{j,m}^{G}2^{-j\frac{n}{2}}\Psi _{G,m}^{j}
	\end{equation*}%
	and%
	\begin{equation*}
	f_{2,k}=\sum_{j=k}^{\infty }\sum_{G\in G^{j}}\sum_{m\in \mathbb{Z}%
		^{n}}\lambda _{j,m}^{G}2^{-j\frac{n}{2}}\Psi _{G,m}^{j},
	\end{equation*}%
	where $\lambda \in \dot{K}_{p,q}^{\alpha }\tilde{f}_{\beta }^{s}$.
	
	\textit{Estimate of} $f_{1,k}$. We claim that%
	\begin{equation}
	|e^{t\Delta }(\varphi _{j}\ast f_{1,k})(x)|\lesssim \mathcal{M}(\varphi
	_{j}\ast f_{1,k})(x),\quad x\in \mathbb{R}^{n},j\in \mathbb{N}_{0},
	\label{claim}
	\end{equation}%
	where the implicit constant is independent of $x,k,j$ and $t$. Using
	the estimate \eqref{claim} and Lemma \ref{Maximal-Inq} we obtain%
	\begin{eqnarray}
	\big\|e^{t\Delta }f_{1,k}\big\|_{\dot{K}_{p,q}^{\alpha }F_{\beta }^{s+\theta
	}} &=&\Big\|\Big(\sum_{j=0}^{\infty }2^{j(s+\theta )\beta }|\varphi _{j}\ast
	e^{t\Delta }f_{1,k}|^{\beta }\Big)^{1/\beta }\Big\|_{\dot{K}_{p,q}^{\alpha }}
	\notag \\
	&=&\Big\|\Big(\sum_{j=0}^{\infty }2^{j(s+\theta )\beta }|e^{t\Delta
	}(\varphi _{j}\ast f_{1,k})|^{\beta }\Big)^{1/\beta }\Big\|_{\dot{K}%
		_{p,q}^{\alpha }}  \notag \\
	&\lesssim &\Big\|\Big(\sum_{j=0}^{\infty }2^{j(s+\theta )\beta }|\mathcal{M}%
	(\varphi _{j}\ast f_{1,k})|^{\beta }\Big)^{1/\beta }\Big\|_{\dot{K}%
		_{p,q}^{\alpha }}  \notag \\
	&\lesssim &\Big\|\Big(\sum_{j=0}^{\infty }2^{j(s+\theta )\beta }|\varphi
	_{j}\ast f_{1,k}|^{\beta }\Big)^{1/\beta }\Big\|_{\dot{K}_{p,q}^{\alpha }}.
	\label{estimate-f1}
	\end{eqnarray}%
	In view of the definition of the spaces $\dot{K}_{p,q}^{\alpha }F_{\beta
	}^{s+\theta }$, \eqref{estimate-f1} is just $\big\|f_{1,k}\big\|_{\dot{K}%
		_{p,q}^{\alpha }F_{\beta }^{s+\theta }}$. Thanks to Theorem \ref{wavelet} we
	get%
	\begin{eqnarray}
	\big\|f_{1,k}\big\|_{\dot{K}_{p,q}^{\alpha }F_{\beta }^{s+\theta }}
	&\lesssim &\Big\|\Big(\sum_{j=0}^{k-1}\sum_{G\in G^{j}}\sum_{m\in \mathbb{Z}%
		^{n}}2^{j(s+\theta )\beta }|\lambda _{j,m}^{G}|^{\beta }\chi _{j,m}\Big)%
	^{1/\beta }\Big\|_{\dot{K}_{p,q}^{\alpha }}  \notag \\
	&\lesssim &2^{k\theta }\Big\|\Big(\sum_{j=0}^{k-1}\sum_{G\in
		G^{j}}\sum_{m\in \mathbb{Z}^{n}}2^{js\beta }|\lambda _{j,m}^{G}|^{\beta
	}\chi _{j,m}\Big)^{1/\beta }\Big\|_{\dot{K}_{p,q}^{\alpha }}  \notag \\
	&=&ct^{-\frac{\theta }{2}}\big\|\lambda \big\|_{\dot{K}_{p,q}^{\alpha }%
		\tilde{f}_{\beta }^{s}}  \notag \\
	&\lesssim &t^{-\frac{\theta }{2}}\big\|f\big\|_{\dot{K}_{p,q}^{\alpha
		}F_{\beta }^{s}}.  \label{estimate-f1.1}
	\end{eqnarray}%
	Substituting \eqref{estimate-f1.1} into \eqref{estimate-f1}, this gives the
	desired estimate. Now we prove our claim. Since $g\in \mathcal{S}\left( 
	\mathbb{R}^{n}\right) $, we have%
	\begin{equation*}
	|e^{t\Delta }(\varphi _{j}\ast f_{1,k})(x)|\lesssim \eta _{t^{-\frac{1}{2}%
		},m}\ast |\varphi _{j}\ast f_{1,k}|(x),\quad m>n,
	\end{equation*}%
	which can be estimated by 
	\begin{equation}
	c\eta _{t^{-\frac{1}{2}},m}\chi _{B(x,2t^{\frac{1}{2}})}\ast |\varphi
	_{j}\ast f_{1,k}|(x)+\eta _{t^{-\frac{1}{2}},m}\chi _{\mathbb{R}%
		^{n}\backslash B(x,2t^{\frac{1}{2}})}\ast |\varphi _{j}\ast f_{1,k}|(x).
	\label{estimate-f1.2.1}
	\end{equation}%
	Obviously, the first term of \eqref{estimate-f1.2.1} is bounded by $c%
	\mathcal{M}(\varphi _{j}\ast f_{1,k})(x)$. We have%
	\begin{eqnarray*}
		&&\eta _{t^{-\frac{1}{2}},m}\chi _{\mathbb{R}^{n}\backslash B(x,2t^{\frac{1}{%
					2}})}\ast |\varphi _{j}\ast f_{1,k}|(x) \\
		&=&\sum_{i=1}^{\infty }\eta _{t^{-\frac{1}{2}},m}\chi _{B(x,2^{i+1}t^{\frac{1}{%
					2}})\backslash B(x,2^{i}t^{\frac{1}{2}})}\ast |\varphi _{j}\ast f_{1,k}|(x)
		\\
		&\leq &\sum_{i=1}^{\infty }2^{-im}\eta _{t^{-\frac{1}{2}},m}\chi
		_{B(x,2^{i+1}t^{\frac{1}{2}})}\ast |\varphi _{j}\ast f_{1,k}|(x) \\
		&\lesssim &\mathcal{M}(\varphi _{j}\ast f_{1,k})(x)\sum_{i=1}^{\infty
		}2^{i(n-m)} \\
		&\lesssim &\mathcal{M}(\varphi _{j}\ast f_{1,k})(x).
	\end{eqnarray*}
	
	\textit{Estimate of} $f_{2,k}$. If $j\geq k$, then $2^{j}(\frac{t}{T})^{%
		\frac{1}{2}}>2^{j-k}\geq 1$, which yields that%
	\begin{eqnarray*}
		e^{t\Delta }f_{2,k} &=&\sum_{j=k}^{\infty }\sum_{G\in G^{j}}\sum_{m\in 
			\mathbb{Z}^{n}}2^{-j\theta }(\frac{t}{T})^{-\frac{\theta }{2}}\lambda
		_{j,m}^{G}2^{-j\frac{n}{2}}2^{j\theta }(\frac{t}{T})^{\frac{\theta }{2}%
		}e^{t\Delta }\Psi _{G,m}^{j} \\
		&=&\sum_{j=k}^{\infty }\sum_{G\in G^{j}}\sum_{m\in \mathbb{Z}^{n}}\mu
		_{j,m}^{G}b_{G,m}^{j}(x,t)_{\theta },
	\end{eqnarray*}%
	where%
	\begin{equation*}
	C\mu _{j,m}^{G}=2^{-j\theta }(\frac{t}{T})^{-\frac{\theta }{2}}\lambda
	_{j,m}^{G}\quad \text{and}\quad b_{G,m}^{j}(x,t)_{\theta }=2^{-j\frac{n}{2}%
	}2^{j\theta }(\frac{t}{T})^{\frac{\theta }{2}}\Psi _{G,m}^{j},
	\end{equation*}%
	and $C$ as in \eqref{molecule1}. Let%
	\begin{equation*}
	\mu ^{\ast }=\{2^{-j\theta }(\frac{t}{T})^{-\frac{\theta }{2}}\lambda
	_{j,m}^{G},j\in \mathbb{N}_{0},G\in G^{\ast },m\in \mathbb{Z}^{n}\}.
	\end{equation*}%
	Again, from Theorem \ref{wavelet} we obtain%
	\begin{eqnarray*}
		\big\|e^{t\Delta }f_{2,k}\big\|_{\dot{K}_{p,q}^{\alpha }F_{\beta }^{s+\theta
		}} &\lesssim &\big\|\mu ^{\ast }\big\|_{\dot{K}_{p,q}^{\alpha }\tilde{f}%
			_{\beta }^{s+\theta }} \\
		&=&ct^{-\frac{\theta }{2}}\Big\|\Big(\sum_{j=k}^{\infty }\sum_{G\in
			G^{j}}\sum_{m\in \mathbb{Z}^{n}}2^{js\beta }|\lambda _{j,m}^{G}|^{\beta
		}\chi _{j,m}\Big)^{1/\beta }\Big\|_{\dot{K}_{p,q}^{\alpha }} \\
		&=&ct^{-\frac{\theta }{2}}\big\|\lambda \big\|_{\dot{K}_{p,q}^{\alpha }%
			\tilde{f}_{\beta }^{s}} \\
		&\lesssim &t^{-\frac{\theta }{2}}\big\|f\big\|_{\dot{K}_{p,q}^{\alpha
			}F_{\beta }^{s}}
	\end{eqnarray*}%
	and this completes the proof.
\end{proof}

The following lemmas was proved in \cite{Dr-Herz-Heat}.

\begin{lem}
	\label{Heat-kernel2}\textit{Let }$\alpha _{1},\alpha _{2}\in \mathbb{R}%
	,0<t<\infty \mathit{\ }$\textit{and} $1<p,\kappa ,q,r<\infty $. \textit{We
		suppose that }$1<q\leq p<\infty $ and $-\frac{n}{p}<\alpha _{1}\leq \alpha
	_{2}<n-\frac{n}{q}$. \textit{Then there exists a positive constant }$C>0$%
	\textit{\ independent of }$t$\textit{\ such that}%
	\begin{equation*}
	\big\|e^{t\Delta }f\big\|_{\dot{K}_{p,r}^{\alpha _{1}}}\leq Ct^{-\frac{1}{2}(%
		\frac{n}{q}-\frac{n}{p}+\alpha _{2}-\alpha _{1})}\big\|f\big\|_{\dot{K}%
		_{q,\delta }^{\alpha _{2}}}
	\end{equation*}%
	for any $f\in \dot{K}_{q,\delta }^{\alpha _{2}}$, where%
	\begin{equation*}
	\delta =\left\{ 
	\begin{array}{ccc}
	r, & \text{if} & \alpha _{2}=\alpha _{1}, \\ 
	\kappa , & \text{if} & \alpha _{2}>\alpha _{1}.%
	\end{array}%
	\right.
	\end{equation*}
\end{lem}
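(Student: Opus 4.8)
The plan is to reduce everything to $t=1$ by parabolic scaling and then to prove the $t=1$ bound by dyadically decomposing the heat convolution against the annuli $R_k$. The heat kernel is self-similar, $g_t(x):=\mathcal F^{-1}(e^{-t|\xi|^2})(x)=t^{-n/2}g_1(x/\sqrt t)$, so a change of variables gives $e^{t\Delta}f=\big(e^{\Delta}[f(\sqrt t\,\cdot)]\big)(\cdot/\sqrt t)$; combining this with the dilation relation $\|h(\lambda\cdot)\|_{\dot K_{p,q}^{\alpha}}\approx\lambda^{-(n/p+\alpha)}\|h\|_{\dot K_{p,q}^{\alpha}}$ (immediate from the definition of the homogeneous Herz norm) produces the factors $t^{(n/p+\alpha_1)/2}$ and $t^{-(n/q+\alpha_2)/2}$, whose product is precisely $t^{-\frac12(\frac nq-\frac np+\alpha_2-\alpha_1)}$. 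Thus the asserted estimate for all $t\in(0,\infty)$ is equivalent to the single inequality $\|e^{\Delta}h\|_{\dot K_{p,r}^{\alpha_1}}\lesssim\|h\|_{\dot K_{q,\delta}^{\alpha_2}}$, which is what I would prove.

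\textbf{The $t=1$ inequality.} Write $h=\sum_{k\in\mathbb Z}h\chi_k$, so $e^{\Delta}h=\sum_k g_1\ast(h\chi_k)$, and for fixed $j$ bound $\|\chi_j(e^{\Delta}h)\|_p\le\sum_k\|\chi_j\,(g_1\ast(h\chi_k))\|_p$, splitting the sum into the \emph{diagonal} block $|k-j|\le1$ and the \emph{off-diagonal} blocks $k\le j-2$ and $k\ge j+2$. On an off-diagonal block, $x\in R_j$ and $y\in R_k$ force $|x-y|\gtrsim 2^{\max(j,k)}$, hence $|g_1(x-y)|\lesssim e^{-c2^{2\max(j,k)}}$ (read as $\lesssim1$ when the exponent is non-positive); combined with H\"older on an annulus, $\int_{R_k}|h|\le|R_k|^{1/q'}\|h\chi_k\|_q\approx 2^{kn/q'}\|h\chi_k\|_q$, this gives $\|\chi_j(g_1\ast(h\chi_k))\|_p\lesssim 2^{jn/p}e^{-c2^{2\max(j,k)}}2^{kn/q'}\|h\chi_k\|_q$, and summing in $k$ and then taking the $\dot K_{p,r}^{\alpha_1}$-norm in $j$ is a discrete convolution (Schur-type) estimate which converges precisely because $\alpha_1>-n/p$ controls the tails directed towards the origin while $\alpha_2<n-n/q$, i.e. $n/q'-\alpha_2>0$, controls those directed towards infinity. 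On the diagonal block one uses Young's inequality $\|g_1\ast(h\chi_k)\|_p\le\|g_1\|_s\|h\chi_k\|_q$ with $\tfrac1s=1-\tfrac1q+\tfrac1p\in(0,1]$ and $\|g_1\|_s<\infty$, so $\|\chi_j(g_1\ast(h\chi_k))\|_p\lesssim\|h\chi_k\|_q$ for $|k-j|\le1$, and together with the boundedness of the Hardy--Littlewood maximal operator on $\dot K_{p,r}^{\alpha_1}$ (which holds since $1<p<\infty$ and $-n/p<\alpha_1<n-n/p$, the latter being forced by $\alpha_1\le\alpha_2<n-n/q\le n-n/p$) this contributes the term $\big\|\big(2^{j\alpha_1}\|h\chi_j\|_q\big)_j\big\|_{\ell^r}$.

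\textbf{The dichotomy and the main difficulty.} If $\alpha_1=\alpha_2$, all surviving terms sit on comparable annuli, $2^{j\alpha_1}\|h\chi_k\|_q\approx 2^{k\alpha_2}\|h\chi_k\|_q$ for $|k-j|\le1$, and one reads off $\|e^{\Delta}h\|_{\dot K_{p,r}^{\alpha_1}}\lesssim\|h\|_{\dot K_{q,r}^{\alpha_2}}$ with the sharp Herz index $\delta=r$. If $\alpha_1<\alpha_2$, put $c_j:=2^{j\alpha_2}\|h\chi_j\|_q$; the diagonal contribution is then $\|(2^{j(\alpha_1-\alpha_2)}c_j)_j\|_{\ell^r}$, which is controlled as $j\to+\infty$ but \emph{diverges as $j\to-\infty$} — this is the crux. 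The remedy is that near the origin one must not use Young's inequality for the diagonal block but the cruder pointwise bound $|g_1\ast(h\chi_k)(x)|\le\|g_1\|_\infty\int_{R_k}|h|$, which loses nothing there and inserts an extra factor $2^{jn/p}2^{kn/q'}$; the diagonal contribution for $j\le0$ then becomes $\lesssim 2^{j\gamma}(c_{j-1}+c_j+c_{j+1})$ with $\gamma=(\alpha_1+n/p)+(n/q'-\alpha_2)>0$, and this, together with the $j>0$ part (where the decaying factor $2^{j(\alpha_1-\alpha_2)}$ is retained), is absorbed into $\|h\|_{\dot K_{q,\kappa}^{\alpha_2}}$ for any $\kappa\in(1,\infty)$ — by H\"older against a summable geometric sequence, or trivially via $\ell^\kappa\hookrightarrow\ell^r$ when $\kappa\le r$. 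The off-diagonal blocks are treated exactly as in the previous step, the strict inequality $\alpha_1<\alpha_2$ only improving the geometric ratios; assembling the three blocks and summing in $j$ finishes the $t=1$ estimate, hence the lemma.
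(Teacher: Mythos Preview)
The paper does not prove this lemma here; it simply cites the author's companion article \cite{Dr-Herz-Heat}. Your argument is correct and self-contained. The reduction to $t=1$ via parabolic self-similarity together with the exact homogeneity $\|h(\lambda\cdot)\|_{\dot K^{\alpha}_{p,q}}\approx\lambda^{-(n/p+\alpha)}\|h\|_{\dot K^{\alpha}_{p,q}}$ of the homogeneous Herz norm is the natural first move, and your treatment of the $t=1$ case---annular decomposition of the source, near-diagonal versus far blocks, Gaussian decay on the far blocks---is the standard mechanism for heat-kernel bounds on Herz (and more generally power-weighted) spaces. In fact the off-diagonal matrix $A_{jk}=2^{j(\alpha_1+n/p)}e^{-c\,2^{2\max(j,k)}}2^{k(n/q'-\alpha_2)}$ satisfies $\sum_{j,k}A_{jk}<\infty$ under the stated hypotheses $\alpha_1+n/p>0$ and $n/q'-\alpha_2>0$, so Schur's test gives $\ell^\kappa\to\ell^r$ for any indices; the only place the dichotomy in $\delta$ enters is the diagonal block. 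You correctly isolate the obstruction there: when $\alpha_1<\alpha_2$, Young's inequality leaves a factor $2^{j(\alpha_1-\alpha_2)}$ that diverges as $j\to-\infty$, and switching to the cruder bound $|g_1\ast(h\chi_k)|\le\|g_1\|_\infty\|h\chi_k\|_1$ for $j\le0$ restores the positive exponent $(\alpha_1+n/p)+(n/q'-\alpha_2)$, explaining why the outer Herz index may then be taken arbitrary.

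One cosmetic remark: the clause ``together with the boundedness of the Hardy--Littlewood maximal operator on $\dot K_{p,r}^{\alpha_1}$'' in your diagonal step is superfluous as written. Young's inequality already yields $\|\chi_j(g_1\ast(h\chi_k))\|_p\lesssim\|h\chi_k\|_q$ directly, and nothing further is needed to assemble the diagonal contribution; the maximal function does not enter your argument and the parenthetical can be dropped.
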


\subsection{The results and their proofs.}

We look for mild solutions of  \eqref{equation} i.e.  for solutions of  integral
equation

\begin{equation}
u(t,x)=e^{t\Delta }u_{0}(x)+\int_{0}^{t}e^{(t-\tau )\Delta }G(u)(\tau
,x)d\tau .  \label{int-equa}
\end{equation}%
We set%
\begin{equation*}
F(u)(t,x)=\int_{0}^{t}e^{(t-\tau )\Delta }G(u)(\tau ,x)d\tau .
\end{equation*}%
We study Cauchy problem for semilinear parabolic equations \eqref{equation}
with initially data in Herz-type Triebel-Lizorkin spaces and will assume
that $G\ $belongs to $G\in Lip\mu $. We set%
\begin{equation*}
\bar{s}=\frac{n}{p}+\alpha -\frac{2}{\mu -1}\quad \text{and}\quad \vartheta =%
\frac{s-\bar{s}}{2}.
\end{equation*}

We now state the existence of mild solutions of \eqref{int-equa}.

\begin{thm}
	\label{Theorem1}Let $1<p,q<\infty ,1<\beta \leq \infty ,\mu >1,0 \leq\alpha< n-\frac{n}{p}
	,s\geq \frac{n}{p}-\frac{n}{q}$ and 
	\begin{equation*}
	0<s<\frac{n}{p}+\alpha .
	\end{equation*}%
	Let $G\in Lip\mu $ and 
	\begin{equation*}
	0<s_{\mu }<\mu .
	\end{equation*}%
	$\mathrm{(i)}$ For all initial data $u_{0}$ in $\dot{K}_{p,q}^{\alpha
	}F_{\beta }^{s}$ with $s>\bar{s}$, there exists a maximal solution $u$ to %
	\eqref{int-equa} in $C([0,T_{0}),\dot{K}_{p,q}^{\alpha }F_{\beta }^{s})$
	with $T_{0}\geq C\big\|u_{0}\big\|_{\dot{K}_{p,q}^{\alpha }F_{\beta }^{s}}^{-%
		\frac{1}{\vartheta }}$.$\newline
	\mathrm{(ii)}$ Let $\theta <2\vartheta (\mu -1)$ or $\theta =2\vartheta (\mu
	-1),s>1$ and $G\in Lips_{0}$\ with 
	\begin{equation*}
	s_{0}=\frac{\frac{n}{p}+\alpha }{\frac{n}{p}+\alpha -s+1}.
	\end{equation*}%
	We have 
	\begin{equation*}
	u-e^{t\Delta }u_{0}\in C([0,T_{0}),\dot{K}_{p,q}^{\alpha }F_{\beta
	}^{s+\theta }).
	\end{equation*}
\end{thm}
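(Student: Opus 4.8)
The plan is to solve the integral equation \eqref{int-equa} by a Banach fixed point argument in $E_{T}=C([0,T],\dot{K}_{p,q}^{\alpha}F_{\beta}^{s})$ with the norm $\|u\|_{E_{T}}=\sup_{0\le t\le T}\|u(t)\|_{\dot{K}_{p,q}^{\alpha}F_{\beta}^{s}}$, restricted to a closed ball, where $\Phi(u)(t)=e^{t\Delta}u_{0}+F(u)(t)$ and $u_{0}$ is taken real-valued so that the composition estimates apply. The two inputs are, first, the composition bound of Theorem~\ref{Key-theorem1}, which applies since $0<s<\tfrac{n}{p}+\alpha$, $0<s_{\mu}<\mu$, $\alpha\ge0$ and $s\ge\tfrac{n}{p}-\tfrac{n}{q}$, and gives (using $\dot{K}_{p,q}^{\alpha}F_{\beta}^{s}\hookrightarrow\dot{K}_{p,q}^{\alpha}F_{\infty}^{s}$, and $\dot{K}_{p,q}^{\alpha}F_{\beta}^{s}\hookrightarrow L^{1}_{\mathrm{loc}}$ from Theorem~\ref{regular-distribution1 copy(1)} so that $G(u(\tau))$ is a well-defined regular distribution)
\begin{equation*}
\big\|G(u(\tau))\big\|_{\dot{K}_{p,q}^{\alpha}F_{\beta}^{s_{\mu}}}\lesssim\big\|G\big\|_{Lip\mu}\big\|u(\tau)\big\|_{\dot{K}_{p,q}^{\alpha}F_{\infty}^{s}}^{\mu}\lesssim\big\|G\big\|_{Lip\mu}\big\|u\big\|_{E_{T}}^{\mu};
\end{equation*}
and second, the heat-kernel smoothing of Lemma~\ref{Heat-kernel3}, which, since $s-s_{\mu}=(\mu-1)(\tfrac{n}{p}+\alpha-s)\ge0$, yields $\|e^{(t-\tau)\Delta}g\|_{\dot{K}_{p,q}^{\alpha}F_{\beta}^{s}}\lesssim (t-\tau)^{-(s-s_{\mu})/2}\|g\|_{\dot{K}_{p,q}^{\alpha}F_{\beta}^{s_{\mu}}}$, together with $\|e^{t\Delta}u_{0}\|_{\dot{K}_{p,q}^{\alpha}F_{\beta}^{s}}\lesssim\|u_{0}\|_{\dot{K}_{p,q}^{\alpha}F_{\beta}^{s}}$ (the case $\theta=0$).

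The elementary identity $\tfrac12(s-s_{\mu})=1-\vartheta(\mu-1)$ shows that the hypothesis $s>\bar{s}$ is precisely what makes $\tfrac12(s-s_{\mu})<1$, so that $\int_{0}^{t}(t-\tau)^{-(s-s_{\mu})/2}\,d\tau=c\,t^{\vartheta(\mu-1)}<\infty$; hence $\|F(u)(t)\|_{\dot{K}_{p,q}^{\alpha}F_{\beta}^{s}}\lesssim t^{\vartheta(\mu-1)}\|u\|_{E_{T}}^{\mu}$, with the implicit constant depending on $\|G\|_{Lip\mu}$. Combining with the bound on $e^{t\Delta}u_{0}$, the map $\Phi$ sends the ball of radius $R=2C\|u_{0}\|_{\dot{K}_{p,q}^{\alpha}F_{\beta}^{s}}$ into itself once $C\,T^{\vartheta(\mu-1)}R^{\mu-1}\le\tfrac12$, i.e. once $T\le c\,\|u_{0}\|_{\dot{K}_{p,q}^{\alpha}F_{\beta}^{s}}^{-1/\vartheta}$, which is the source of the lower bound on $T_{0}$.

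For the contraction I would use the Lipschitz-on-bounded-sets version of the composition estimate,
\begin{equation*}
\big\|G(u)-G(v)\big\|_{\dot{K}_{p,q}^{\alpha}F_{\beta}^{s_{\mu}}}\lesssim\big\|G\big\|_{Lip\mu}\big(\big\|u\big\|_{\dot{K}_{p,q}^{\alpha}F_{\infty}^{s}}^{\mu-1}+\big\|v\big\|_{\dot{K}_{p,q}^{\alpha}F_{\infty}^{s}}^{\mu-1}\big)\big\|u-v\big\|_{\dot{K}_{p,q}^{\alpha}F_{\infty}^{s}},
\end{equation*}
obtained by rerunning the proof of Theorem~\ref{Key-theorem1} (with Lemma~\ref{Key-lemma1} and Theorem~\ref{Multiplication1}) for the two-variable function $(a,b)\mapsto G(a)-G(b)$, starting from the pointwise bound $|G(a)-G(b)|\le\|G\|_{Lip\mu}(|a|^{\mu-1}+|b|^{\mu-1})|a-b|$ that follows from \eqref{G-properties}. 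The same time integration gives $\|\Phi(u)-\Phi(v)\|_{E_{T}}\lesssim T^{\vartheta(\mu-1)}R^{\mu-1}\|u-v\|_{E_{T}}$, so after shrinking $T$ the Banach fixed point theorem yields a local solution; a Gronwall argument gives uniqueness in $E_{T}$ and lets one patch solutions, and defining $T_{0}$ as the supremum of existence times produces the maximal solution $u\in C([0,T_{0}),\dot{K}_{p,q}^{\alpha}F_{\beta}^{s})$. Continuity in $t$ on $[0,T_{0})$ follows from the continuity of $e^{t\Delta}$ on $\dot{K}_{p,q}^{\alpha}F_{\beta}^{s}$ together with the estimate on $F(u)$ (which forces $F(u)(t)\to0$ as $t\to0^{+}$), and the uniform operator bound above; density of functions with compactly supported Littlewood--Paley pieces, furnished by Theorem~\ref{wavelet}, handles strong continuity at $t=0$.

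For part~(ii) the point is that $u-e^{t\Delta}u_{0}=F(u)$, which I would estimate directly in $\dot{K}_{p,q}^{\alpha}F_{\beta}^{s+\theta}$: by Lemma~\ref{Heat-kernel3} with base regularity $s_{\mu}$ and gain $s+\theta-s_{\mu}\ge0$, and by the composition bound,
\begin{equation*}
\big\|F(u)(t)\big\|_{\dot{K}_{p,q}^{\alpha}F_{\beta}^{s+\theta}}\lesssim\int_{0}^{t}(t-\tau)^{-(s+\theta-s_{\mu})/2}\big\|G(u(\tau))\big\|_{\dot{K}_{p,q}^{\alpha}F_{\beta}^{s_{\mu}}}\,d\tau\lesssim t^{1-(s+\theta-s_{\mu})/2}\big\|u\big\|_{E_{T_{0}}}^{\mu},
\end{equation*}
which converges exactly when $s+\theta-s_{\mu}<2$, i.e. (using $s-s_{\mu}=2-2\vartheta(\mu-1)$) when $\theta<2\vartheta(\mu-1)$; continuity in $t$, including at $t=0$ where the bound vanishes, follows from the same estimate by splitting the integral and dominated convergence. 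In the borderline case $\theta=2\vartheta(\mu-1)$ one has $s+\theta=s_{\mu}+2$, the exponent equals $1$ and the integral diverges logarithmically; here I would replace Theorem~\ref{Key-theorem1} by Theorem~\ref{Key-theorem2} applied with $\mu$ replaced by $s_{0}=\frac{\frac{n}{p}+\alpha}{\frac{n}{p}+\alpha-s+1}$: indeed $s=1+\frac{s_{0}-1}{s_{0}}(\tfrac{n}{p}+\alpha)$ is exactly \eqref{new2} for this $s_{0}$, the bounds \eqref{new1} for $s_{0}$ reduce to $s>1$ and $\alpha<n-\tfrac{n}{p}$, and $s_{0}\ge\frac{\frac{n}{p}+\alpha}{\frac{n}{q}+\alpha+1}$ is equivalent to $s\ge\tfrac{n}{p}-\tfrac{n}{q}$; since $G\in Lip\,s_{0}$ this gives $\|G(u(\tau))\|_{\dot{K}_{p,q}^{\alpha}F_{\infty}^{s_{0}}}\lesssim\|u(\tau)\|_{\dot{K}_{p,q}^{\alpha}F_{\infty}^{s}}^{s_{0}}$, and Lemma~\ref{Heat-kernel3} with base regularity $s_{0}$ (one checks $s_{0}>s_{\mu}$, so $\tfrac12(s_{\mu}+2-s_{0})<1$) makes the time integral converge. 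The main obstacle is twofold: establishing the difference version of the composition estimate for the contraction — technically the heaviest step, though it follows the pattern of Theorem~\ref{Key-theorem1} and Lemma~\ref{Key-lemma1} — and handling the borderline regularity $\theta=2\vartheta(\mu-1)$, where the naive integrability in time fails and the missing half-derivative must be extracted from the sharper composition result of Theorem~\ref{Key-theorem2} with the tuned exponent $s_{0}$.
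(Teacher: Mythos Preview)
Your plan for part~(i) diverges from the paper at the contraction step, and there the gap is real. You want
\[
\big\|G(u)-G(v)\big\|_{\dot K_{p,q}^{\alpha}F_{\beta}^{s_{\mu}}}\lesssim\big(\|u\|_{\dot K_{p,q}^{\alpha}F_{\infty}^{s}}^{\mu-1}+\|v\|_{\dot K_{p,q}^{\alpha}F_{\infty}^{s}}^{\mu-1}\big)\|u-v\|_{\dot K_{p,q}^{\alpha}F_{\infty}^{s}}
\]
by ``rerunning Theorem~\ref{Key-theorem1} for $(a,b)\mapsto G(a)-G(b)$''. But a two-variable analogue of Theorem~\ref{Key-theorem1} would only yield $\|G(u)-G(v)\|\lesssim(\|u\|+\|v\|)^{\mu}$; the factor $\|u-v\|$ does not fall out of the pointwise bound alone, since the proof of Theorem~\ref{Key-theorem1} controls oscillations of $G(f)$ through a Taylor expansion of $G$, not through the zeroth-order Lipschitz inequality. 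To isolate $\|u-v\|$ one would have to write $G(u)-G(v)=(u-v)\int_{0}^{1}G'(v+\theta(u-v))\,d\theta$ and combine a product estimate with a composition estimate for $G'\in Lip(\mu-1)$ --- a genuinely different (and for $s_{\mu}>1$ delicate) argument that the paper never carries out. The paper sidesteps this entirely: it runs the contraction in the auxiliary \emph{smoothness-free} space $X=C([0,T),\dot K_{\tilde p,q}^{0})$, $\tfrac1{\tilde p}=\tfrac1p+\tfrac{\alpha-s}{n}$, where the pointwise bound plus H\"older and Lemma~\ref{Heat-kernel2} give \eqref{est-u-v}--\eqref{pointfixe2} directly. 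Membership in $Y=C([0,T),\dot K_{p,q}^{\alpha}F_{\beta}^{s})$ is obtained a posteriori: the iterates are shown to be \emph{bounded} in $Y$ (this needs only the one-sided estimate of Theorem~\ref{Key-theorem1}, exactly your $\|F(u)\|\lesssim T^{\vartheta(\mu-1)}\|u\|_{Y}^{\mu}$), and a weakly convergent subsequence must coincide with the $X$-limit $u$.

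For the borderline $\theta=2\vartheta(\mu-1)$ in part~(ii), your idea of using Theorem~\ref{Key-theorem2} with exponent $s_{0}$ is correct and is what the paper does, but note that Theorem~\ref{Key-theorem2} only places $G(u)$ in $\dot K_{p,q}^{\alpha}F_{\infty}^{s_{0}}$, not in $F_{\beta}^{s_{0}}$, so you cannot feed it straight into Lemma~\ref{Heat-kernel3} with the third index $\beta$. The paper closes this by interpolating via \eqref{Interpolation} between the $F_{\beta}^{s_{\mu}}$-bound (Theorem~\ref{Key-theorem1}) and the $F_{\infty}^{s_{0}}$-bound (Theorem~\ref{Key-theorem2}, possibly after an extra smoothing step from Lemma~\ref{Heat-kernel3}), splitting into the two sub-cases $s+\theta<s_{0}$ and $s+\theta\geq s_{0}$.
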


\begin{proof}
	We will do the proof into two steps. Our arguments are based on \cite{Ri98}.
	
	\textbf{Step 1.}\textit{\ }We prove part (i) of the theorem.
	
	\textbf{Substep 1.1.} In this step we prove the existence of a solution to %
	\eqref{int-equa}. Recall that%
	\begin{equation*}
	F(u)(t,x)=\int_{0}^{t}e^{(t-\tau )\Delta }G(u)(\tau ,x)d\tau \quad \text{and}%
	\quad \frac{1}{\tilde{p}}=\frac{1}{p}+\frac{\alpha -s}{n}.
	\end{equation*}%
	For simplicity, we consider the spaces%
	\begin{equation*}
	Y=C([0,T),\dot{K}_{p,q}^{\alpha }F_{\beta }^{s})\quad \text{and}\quad
	X=C([0,T),\dot{K}_{\tilde{p},q}^{0}).
	\end{equation*}%
	Further,  we consider the sequence of functions%
	\begin{equation}
	u^{0}=e^{t\Delta }u_{0}\quad \text{and}\quad u^{j+1}=u^{0}+F(u^{j}),\quad
	j\in \mathbb{N}.  \label{pointfixe}
	\end{equation}%
	From Lemma \ref{Heat-kernel2} and Sobolev embedding $\dot{K}_{p,q}^{\alpha
	}F_{\beta }^{s}\hookrightarrow \dot{K}_{\tilde{p},q}^{0}$, see Theorem \ref%
	{embeddings3}, we deduce that 
	\begin{equation}
	\big\|u^{0}\big\|_{\dot{K}_{\tilde{p},q}^{0}}\lesssim \big\|u_{0}\big\|_{%
		\dot{K}_{\tilde{p},q}^{0}}\lesssim \big\|u_{0}\big\|_{\dot{K}_{p,q}^{\alpha
		}F_{\beta }^{s}}.  \label{embYintoX1}
	\end{equation}%
	Let $u,v\in X$. Since, $\frac{\tilde{p}}{\mu }>1$, again, by Lemma \ref%
	{Heat-kernel2} we obtain%
	\begin{eqnarray}
	&&\big\|F(u)(t,\cdot )-F(v)(t,\cdot )\big\|_{\dot{K}_{\tilde{p},q}^{0}} 
	\notag \\
	&\leq &\int_{0}^{t}\big\|e^{(t-\tau )\Delta }(G(u)(\tau ,\cdot )-G(v)(\tau
	,\cdot ))\big\|_{\dot{K}_{\tilde{p},q}^{0}}d\tau  \notag \\
	&\leq &\int_{0}^{t}\big\|e^{(t-\tau )\Delta }(G(u)(\tau ,\cdot )-G(v)(\tau
	,\cdot ))\big\|_{\dot{K}_{\tilde{p},\frac{q}{\mu }}^{0}}d\tau  \notag \\
	&\leq &C\int_{0}^{t}(t-\tau )^{-\frac{n(\mu -1)}{2\tilde{p}}}\big\|G(u)(\tau
	,\cdot )-G(v)(\tau ,\cdot )\big\|_{\dot{K}_{\frac{\tilde{p}}{\mu },\frac{q}{%
				\mu }}^{0}}d\tau ,  \label{est-u-v}
	\end{eqnarray}%
	where the second estimate follows by the embedding $\dot{K}_{\tilde{p},\frac{%
			q}{\mu }}^{0}\hookrightarrow \dot{K}_{\tilde{p},q}^{0}$ and the positive
	constant $C$ is independent of $t$. Observe that%
	\begin{equation*}
	|G(u)(\tau ,\cdot )-G(v)(\tau ,\cdot )|\leq |u-v|(|u|^{\mu -1}+|v|^{\mu -1})
	\end{equation*}%
	and%
	\begin{equation*}
	\frac{\mu }{\tilde{p}}=\frac{1}{\tilde{p}}+\frac{\mu -1}{\tilde{p}},\quad 
	\frac{\mu }{q}=\frac{1}{q}+\frac{\mu -1}{q}.
	\end{equation*}%
	Therefore, by H\"{o}lder's inequality%
	\begin{eqnarray}
	&&\big\|G(u)(\tau ,\cdot )-G(v)(\tau ,\cdot )\big\|_{\dot{K}_{\frac{\tilde{p}%
			}{\mu },\frac{q}{\mu }}^{0}}  \label{est-u-v2} \\
	&\leq &\big\|u(\tau ,\cdot )-v(\tau ,\cdot )\big\|_{\dot{K}_{\tilde{p}%
			,q}^{0}}\Big(\big\|u(\tau ,\cdot )\big\|_{\dot{K}_{\tilde{p},q}^{0}}^{\mu
		-1}+\big\|v(\tau ,\cdot )\big\|_{\dot{K}_{\tilde{p},q}^{0}}^{\mu -1}\Big). 
	\notag
	\end{eqnarray}%
	Substituting \eqref{est-u-v2} into \eqref{est-u-v} and then using 
	\begin{equation*}
	\frac{n(\mu -1)}{2\tilde{p}}=	\frac{(\mu -1)}{2}(\frac{n}{p}+\alpha -s)=1-\frac{(\mu
		-1)(s-\bar{s})}{2},
	\end{equation*}%
	this gives%
	\begin{equation}
	\big\|F(u)-F(v)\big\|_{X}\leq CT^{\frac{(\mu -1)(s-\bar{s})}{2}}\big\|u-v%
	\big\|_{X}\Big(\big\|u\big\|_{X}^{\mu -1}+\big\|v\big\|_{X}^{\mu -1}\Big).
	\label{pointfixe2}
	\end{equation}%
	In view of \eqref{pointfixe}, \eqref{pointfixe2} and \eqref{embYintoX1}, we
	obtain%
	\begin{eqnarray*}
		\big\|u^{j+1}\big\|_{X} &\lesssim &\big\|u^{0}\big\|_{X}+\big\|F(u^{j})\big\|%
		_{X} \\
		&\leq &\big\|u_{0}\big\|_{\dot{K}_{p,q}^{\alpha }F_{\beta }^{s}}+CT^{\frac{%
				(\mu -1)(s-\bar{s})}{2}}\big\|u^{j}\big\|_{X}^{\mu }
	\end{eqnarray*}%
	and%
	\begin{equation*}
	\big\|u^{j+1}-u^{j}\big\|_{X}\leq CT^{\frac{(\mu -1)(s-\bar{s})}{2}}\big\|%
	u^{j}-u^{j-1}\big\|_{X}\Big(\big\|u^{j}\big\|_{X}^{\mu -1}+\big\|u^{j-1}%
	\big\|_{X}^{\mu -1}\Big).
	\end{equation*}%
	Let 
	\begin{equation*}
	\digamma =(\frac{1}{C})^{\frac{2}{(\mu -1)(s-\bar{s})}}\big(\mu ^{\frac{-1}{%
			\mu -1}}-\mu ^{\frac{-\mu }{\mu -1}}\big)^{\frac{2}{s-\bar{s}}}.
	\end{equation*}%
	As in \cite{Dr-Herz-Heat} and \cite{KY94}, the fixed point argument shows
	that if 
	\begin{equation}
	T<\digamma 2^{\frac{-2}{(\mu -1)(s-\bar{s})}}(1-\frac{1}{\mu })^{\mu -1}%
	\big\|u_{0}\big\|_{\dot{K}_{p,q}^{\alpha }F_{\beta }^{s}}^{\frac{-2}{s-\bar{s%
	}}},  \label{T-hyp}
	\end{equation}%
	then the sequence $\{u^{j}\}_{j}$ converges strongly in $X$ to a limit $u$
	which is a solution of the integral equation \eqref{int-equa}.
	
	\textbf{Substep 1.2.} In this step we prove that the solution of the
	integral equation \eqref{int-equa} belongs to $Y$. We employ the notation of
	Substep 1.1. We claim that%
	\begin{equation}
	\big\|u^{j+1}\big\|_{Y}\leq \big\|u_{0}\big\|_{\dot{K}_{p,q}^{\alpha
		}F_{\beta }^{s}}+CT^{\frac{(\mu -1)(s-\bar{s})}{2}}\big\|u^{j}\big\|%
	_{Y}^{\mu }.  \label{uj}
	\end{equation}%
	From \eqref{T-hyp} and \eqref{uj}, the sequence $\{u^{j}\}_{j}$ is bounded.
	Then we can extract a subsequence $\{u^{j_{i}}\}_{i}$ converges weakly to $%
	\tilde{u}\in Y$. From Step 1, $\{u^{j_{i}}\}_{i}$ converges weakly to $u$,
	so $u=\tilde{u}\in Y$. Now we prove the claim. Let $u\in Y$. By Lemma \ref%
	{Heat-kernel3} and Theorem \ref{Key-theorem1} we obtain%
	\begin{eqnarray*}
		\big\|F(u)(t,\cdot )\big\|_{\dot{K}_{p,q}^{\alpha }F_{\beta }^{s}} &\leq
		&\int_{0}^{t}\big\|e^{(t-\tau )\Delta }(G(u)(\tau ,\cdot ))\big\|_{\dot{K}%
			_{p,q}^{\alpha }F_{\beta }^{s}}d\tau \\
		&\leq &C\int_{0}^{t}(t-\tau )^{-\frac{s-s\mu }{2}}\big\|G(u)(\tau ,\cdot )%
		\big\|_{\dot{K}_{p,q}^{\alpha }F_{\beta }^{s_{\mu }}}d\tau \\
		&\leq &C\int_{0}^{t}(t-\tau )^{-\frac{s-s\mu }{2}}\big\|u\big\|_{\dot{K}%
			_{p,q}^{\alpha }F_{\beta }^{s}}^{\mu }d\tau \\
		&\leq &CT^{1-\frac{s-s\mu }{2}}\big\|u\big\|_{Y}^{\mu }.
	\end{eqnarray*}%
	This leads to \eqref{uj}, with the help of the fact that%
	\begin{equation*}
	\big\|u^{0}\big\|_{\dot{K}_{p,q}^{\alpha }F_{\beta }^{s}}\leq C\big\|u_{0}%
	\big\|_{\dot{K}_{p,q}^{\alpha }F_{\beta }^{s}}
	\end{equation*}%
	by, Lemma \ref{Heat-kernel3} and 
	\begin{equation*}
	1-\frac{s-s\mu }{2}=\frac{(\mu -1)(s-\bar{s})}{2}>0.
	\end{equation*}%
	From \eqref{T-hyp}, we easily obtain $T_{0}\geq C\big\|u_{0}\big\|_{\dot{K}%
		_{p,q}^{\alpha }F_{\beta }^{s}}^{\frac{-2}{s-\bar{s}}}$.
	
	\textbf{Substep 1.3.}\textit{\ }We will prove the uniqueness of the solution
	of \eqref{int-equa}. Let $u,v\in Y$ be two solutions for the same initial
	data $u_{0}$. Using the fact that $u$ and $u$ solve \eqref{int-equa}, we
	obtain 
	\begin{equation*}
	\big\|u-v\big\|_{X}=\big\|F(u)-F(v)\big\|_{X}\leq 2CT^{\frac{(\mu -1)(s-\bar{%
				s})}{2}}A^{\mu -1}\big\|u-v\big\|_{X}
	\end{equation*}%
	where%
	\begin{equation*}
	A=\sup_{t\in \lbrack 0,T]}(\big\|u(t\cdot )\big\|_{\dot{K}_{\tilde{p}%
			,q}^{0}}^{\mu -1},\big\|v(t\cdot )\big\|_{\dot{K}_{\tilde{p},q}^{0}}^{\mu
		-1}),\quad T<\max (T_{0}(u),T_{0}(v)).
	\end{equation*}%
	Taking $T$ small enough such that%
	\begin{equation*}
	2CT^{\frac{(\mu -1)(s-\bar{s})}{2}}A^{\mu -1}<\frac{1}{2}
	\end{equation*}%
	we obtain $u=v$ on $[0,T]$. We iterate this to prove that $T_{0}(u)=T_{0}(v)$
	and $u=v$ on $[0,T_{0}(u))$, which ensures the uniqueness of the solution of %
	\eqref{int-equa}.
	
	\textbf{Step 2.} We prove part (ii) of the theorem. We split our
	considerations into the cases $\theta <2\vartheta (\mu -1)$ and $\theta
	=2\vartheta (\mu -1)$.
	
	\noindent $\bullet $ \textit{Case 1.} $\theta <2\vartheta (\mu -1)$. Let $%
	u\in Y$ be a solution of \eqref{int-equa} with initial data $u_{0}$. Observe
	that $2\vartheta (\mu -1)=(\mu -1)(s-\bar{s})=2-s+s_{\mu }$. Thanks to Lemma %
	\ref{Heat-kernel3} and Theorem \ref{Key-theorem1} it follows 
	\begin{eqnarray*}
		\big\|u-e^{t\Delta }u_{0}\big\|_{\dot{K}_{p,q}^{\alpha }F_{\beta }^{s+\theta
		}} &\leq &\int_{0}^{t}\big\|e^{(t-\tau )\Delta }(G(u)(\tau ,\cdot ))\big\|_{%
			\dot{K}_{p,q}^{\alpha }F_{\beta }^{s+\theta }}d\tau \\
		&\leq &C\int_{0}^{t}(t-\tau )^{-\frac{\theta }{2}-\frac{s-s_{\mu }}{2}}\big\|%
		G(u)(\tau ,\cdot )\big\|_{\dot{K}_{p,q}^{\alpha }F_{\beta }^{s_{\mu }}}d\tau
		\\
		&\leq &CT_{0}^{1-\frac{\theta }{2}-\frac{s-s\mu }{2}}\big\|u\big\|_{Y}^{\mu
		},
	\end{eqnarray*}%
	since $1-\frac{\theta }{2}-\frac{s-s_{\mu }}{2}=-\frac{\theta }{2}+\frac{%
		(\mu -1)(s-\bar{s})}{2}>0$.
	
	\noindent $\bullet $ \textit{Case 2.} $\theta =2\vartheta (\mu -1)$. Observe
	that $s_{\mu }<\mu $, this gives%
	\begin{equation*}
	\mu >\frac{\frac{n}{p}+\alpha }{\frac{n}{p}+\alpha -s+1}=s_{0}>1\quad \text{%
		and}\quad s=1+\frac{s_{0}-1}{s_{0}}\Big(\frac{n}{p}+\alpha \Big).
	\end{equation*}%
	In addition%
	\begin{equation*}
	0<s_{\mu }<s_{0}<\frac{n}{p}+\alpha
	\quad \text{and}\quad s_{0}\geq \frac{\frac{n}{p}+\alpha }{\frac{n}{q}%
		+\alpha +1}.
	\end{equation*}%
	Assume that $s+\theta =2+s_{\mu }<s_{0}$. Let $2+s_{\mu }<s_{1}<s_{0}$ and $%
	0<\gamma <1$ be such that $s+\theta =\gamma s_{\mu }+(1-\gamma )s_{1}$. From
	interpolation inequality \eqref{Interpolation}, Lemma \ref{Heat-kernel3},
	Theorems \ref{Key-theorem1} and \ref{Key-theorem2} we get%
	\begin{equation*}
	\big\|u-e^{t\Delta }u_{0}\big\|_{\dot{K}_{p,q}^{\alpha }F_{\beta }^{s+\theta
	}}
	\end{equation*}%
	can be estimated by 
	\begin{eqnarray*}
		&&\int_{0}^{t}\big\|e^{(t-\tau )\Delta }(G(u)(\tau ,\cdot ))\big\|_{\dot{K}%
			_{p,q}^{\alpha }F_{\beta }^{s+\theta }}d\tau \\
		&\leq &C\int_{0}^{t}\big\|e^{(t-\tau )\Delta }G(u)(\tau ,\cdot )\big\|_{\dot{%
				K}_{p,q}^{\alpha }F_{\beta }^{s_{\mu }}}^{\gamma }\big\|e^{(t-\tau )\Delta
		}G(u)(\tau ,\cdot )\big\|_{\dot{K}_{p,q}^{\alpha }F_{\beta
			}^{s_{1}}}^{1-\gamma }d\tau \\
		&\leq &C\int_{0}^{t}\big\|G(u)(\tau ,\cdot )\big\|_{\dot{K}_{p,q}^{\alpha
			}F_{\gamma \beta }^{s_{\mu }}}^{\gamma }\big\|G(u)(\tau ,\cdot )\big\|_{\dot{%
				K}_{p,q}^{\alpha }F_{\infty }^{s_{0}}}^{1-\gamma }d\tau \\
		&\leq &CT_{0}\big\|u\big\|_{Y}^{\gamma \mu }\big\|u\big\|_{Y}^{(1-\gamma
			)s_{0}}.
	\end{eqnarray*}%
	Now assume that $\theta +s=2+s_{\mu }\geq s_{0}$. Let $\kappa >0$ be such
	that $2+s_{\mu }-s_{0}<\kappa <2$. Let $%
	0<\varrho <1$ be such that $s+\theta =\varrho s_{\mu }+(1-\varrho
	)(s_{0}+\kappa )$. Again, from interpolation inequality we obtain 
	\begin{equation*}
	\big\|u-e^{t\Delta }u_{0}\big\|_{\dot{K}_{p,q}^{\alpha }F_{\beta }^{s+\theta
	}}
	\end{equation*}%
	is bounded by%
	
	\begin{eqnarray*}
		&&\int_{0}^{t}\big\|e^{(t-\tau )\Delta }(G(u)(\tau ,\cdot ))\big\|_{\dot{K}%
			_{p,q}^{\alpha }F_{\beta }^{s+\theta }}d\tau  \\
		&\leq &C\int_{0}^{t}\big\|e^{(t-\tau )\Delta }G(u)(\tau ,\cdot )\big\|_{\dot{%
				K}_{p,q}^{\alpha }F_{\beta \varrho }^{s_{\mu }}}^{\varrho }\big\|e^{(t-\tau
			)\Delta }G(u)(\tau ,\cdot )\big\|_{\dot{K}_{p,q}^{\alpha }F_{\infty
			}^{s_{0}+\kappa }}^{1-\varrho }d\tau .
	\end{eqnarray*}%
	Applying H\"{o}lder's inequality, Lemma \ref{Heat-kernel3}, Theorems \ref%
	{Key-theorem1} and \ref{Key-theorem2}, we estimate the last expression by%
	\begin{eqnarray*}
		&&C\Big(\int_{0}^{t}\big\|e^{(t-\tau )\Delta }G(u)(\tau ,\cdot )\big\|_{\dot{%
				K}_{p,q}^{\alpha }F_{\beta \varrho }^{s_{\mu }}}d\tau \Big)^{\varrho } \\
		&&\times \Big(\int_{0}^{t}\big\|e^{(t-\tau )\Delta }G(u)(\tau ,\cdot )\big\|%
		_{\dot{K}_{p,q}^{\alpha }F_{\infty }^{s_{0}+\kappa }}d\tau \Big)^{1-\varrho }
		\\
		&\leq &C\Big(\int_{0}^{t}\big\|G(u)(\tau ,\cdot )\big\|_{\dot{K}%
			_{p,q}^{\alpha }F_{\beta \varrho }^{s_{\mu }}}d\tau \Big)^{\varrho } \\
		&&\times \Big(\int_{0}^{t}(t-\tau )^{-\frac{\kappa }{2}}\big\|G(u)(\tau
		,\cdot )\big\|_{\dot{K}_{p,q}^{\alpha }F_{\infty }^{s_{0}}}d\tau \Big)%
		^{1-\varrho } \\
		&\leq &CT_{0}^{1+(\varrho -1)\frac{\kappa }{2}}\big\|u\big\|_{Y}^{\varrho
			\mu }\big\|u\big\|_{Y}^{(1-\varrho )s_{0}}.
	\end{eqnarray*}%
	The proof is completed.
\end{proof}

Using a combination of the arguments used in the proof of Theorem \ref%
{Theorem1} with the help of Theorem \ref{Key-theorem2} we get the following
result:

\begin{thm}
	Let $0<p,q<\infty ,0 \leq \alpha<n-\frac{n}{p} ,\mu \geq \frac{\frac{n}{p}+\alpha }{\frac{n%
		}{q}+\alpha +1}$\ and 
	\begin{equation*}
	1<\mu <\frac{n}{p}+\alpha .
	\end{equation*}%
	Let $G\in Lip\mu $ and%
	\begin{equation*}
	s=1+\frac{\mu -1}{\mu }\big(\frac{n}{p}+\alpha \big).
	\end{equation*}%
	$\mathrm{(i)}$ For all initial data $u_{0}$ in $\dot{K}_{p,q}^{\alpha
	}F_{\beta }^{s}$ with $s>\bar{s}$, there exists a maximal solution $u$ to %
	\eqref{int-equa} in $C([0,T_{0}),\dot{K}_{p,q}^{\alpha }F_{\beta }^{s})$
	with $T_{0}\geq C\big\|u_{0}\big\|_{\dot{K}_{p,q}^{\alpha }F_{\beta }^{s}}^{-%
		\frac{1}{\vartheta }}$.$\newline
	\mathrm{(ii)}$ Let $\theta \leq 2\vartheta (\mu -1)$. We have 
	\begin{equation*}
	u-e^{t\Delta }u_{0}\in C([0,T_{0}),\dot{K}_{p,q}^{\alpha }F_{\beta
	}^{s+\theta }).
	\end{equation*}
\end{thm}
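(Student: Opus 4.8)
The plan is to follow, almost line by line, the proof of Theorem~\ref{Theorem1}, the only structural change being that the endpoint value $s=1+\frac{\mu-1}{\mu}\big(\frac np+\alpha\big)$ forces $s_{\mu}=s-(\mu-1)\big(\frac np+\alpha-s\big)=\mu$ and $\dfrac{\frac np+\alpha}{\frac np+\alpha-s+1}=\mu$, so that the composition estimate Theorem~\ref{Key-theorem1} used in Theorem~\ref{Theorem1} must here be replaced by Theorem~\ref{Key-theorem2} (whose hypotheses are precisely the ones assumed in the present statement), namely the $Lip\mu$-estimate at the critical level $s_{\mu}=\mu$. As in Theorem~\ref{Theorem1}, I would look for a mild solution, i.e.\ a solution of \eqref{int-equa}, form the Picard iterates $u^{0}=e^{t\Delta}u_{0}$, $u^{j+1}=u^{0}+F(u^{j})$, and work with the spaces $Y=C([0,T),\dot{K}_{p,q}^{\alpha}F_{\beta}^{s})$ and $X=C([0,T),\dot{K}_{\tilde p,q}^{0})$, where $\frac{1}{\tilde p}=\frac1p+\frac{\alpha-s}{n}$; the hypotheses $\mu<\frac np+\alpha$ and $\mu>\frac np+\alpha-n$ ensure, respectively, that $\tilde p$ is well defined and that $\tilde p/\mu>1$.

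For part~(i), the contraction in $X$ uses only Lemma~\ref{Heat-kernel2}, H\"older's inequality together with $|G(u)-G(v)|\le\|G\|_{Lip\mu}|u-v|(|u|^{\mu-1}+|v|^{\mu-1})$, and the Sobolev embedding $\dot{K}_{p,q}^{\alpha}F_{\beta}^{s}\hookrightarrow\dot{K}_{\tilde p,q}^{0}$ of Theorem~\ref{embeddings3}; it yields, exactly as in Theorem~\ref{Theorem1},
\[
\big\|F(u)-F(v)\big\|_{X}\le CT^{\vartheta(\mu-1)}\big\|u-v\big\|_{X}\Big(\big\|u\big\|_{X}^{\mu-1}+\big\|v\big\|_{X}^{\mu-1}\Big),
\]
where $\vartheta(\mu-1)=\tfrac{(\mu-1)(s-\bar s)}{2}>0$ is precisely the assumption $s>\bar s$; the fixed point argument of \cite{Dr-Herz-Heat} and \cite{KY94} then produces a solution on $[0,T)$ as soon as $T\lesssim\|u_{0}\|_{\dot{K}_{p,q}^{\alpha}F_{\beta}^{s}}^{-1/\vartheta}$, whence $T_{0}\ge C\|u_{0}\|_{\dot{K}_{p,q}^{\alpha}F_{\beta}^{s}}^{-1/\vartheta}$. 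To put this solution into $Y$, I would estimate, for $u\in Y$,
\[
\big\|F(u)(t,\cdot)\big\|_{\dot{K}_{p,q}^{\alpha}F_{\beta}^{s}}\le\int_{0}^{t}\big\|e^{(t-\tau)\Delta}G(u)(\tau,\cdot)\big\|_{\dot{K}_{p,q}^{\alpha}F_{\beta}^{s}}\,d\tau,
\]
bounding the integrand by the elementary embedding $\dot{K}_{p,q}^{\alpha}F_{\infty}^{\mu}\hookrightarrow\dot{K}_{p,q}^{\alpha}F_{\beta}^{\mu-\epsilon}$ ($\epsilon>0$ small), by Lemma~\ref{Heat-kernel3} — which gains $s-\mu+\epsilon$ derivatives with the weight $(t-\tau)^{-(s-\mu+\epsilon)/2}$, integrable because $s-\mu=s-s_{\mu}<2$ (equivalently $2\vartheta(\mu-1)>0$) — and by Theorem~\ref{Key-theorem2} together with $\dot{K}_{p,q}^{\alpha}F_{\beta}^{s}\hookrightarrow\dot{K}_{p,q}^{\alpha}F_{\infty}^{s}$, which gives $\|G(u)(\tau,\cdot)\|_{\dot{K}_{p,q}^{\alpha}F_{\infty}^{\mu}}\lesssim\|G\|_{Lip\mu}\|u\|_{Y}^{\mu}$. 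This bounds the iterates in $Y$, so a weakly convergent subsequence has limit equal to the $X$-limit, and the solution belongs to $Y$. Uniqueness is as in Theorem~\ref{Theorem1}: two $Y$-solutions with the same datum satisfy $\|u-v\|_{X}\le 2CT^{\vartheta(\mu-1)}A^{\mu-1}\|u-v\|_{X}$ on short time-intervals, and one iterates over $[0,T_{0})$.

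For part~(ii) I write $u-e^{t\Delta}u_{0}=F(u)$ and run the same chain with $\dot{K}_{p,q}^{\alpha}F_{\beta}^{s}$ replaced by $\dot{K}_{p,q}^{\alpha}F_{\beta}^{s+\theta}$: Lemma~\ref{Heat-kernel3} must now gain $s+\theta-\mu+\epsilon$ derivatives, with integrable weight precisely when $\tfrac{s+\theta-\mu+\epsilon}{2}<1$, i.e.\ (for $\epsilon$ small) when $\theta<2\vartheta(\mu-1)$, since $s+2\vartheta(\mu-1)=\mu+2=s_{\mu}+2$; combined with Theorem~\ref{Key-theorem2} this gives $\|u-e^{t\Delta}u_{0}\|_{\dot{K}_{p,q}^{\alpha}F_{\beta}^{s+\theta}}\lesssim T_{0}^{1-(s+\theta-\mu+\epsilon)/2}\|u\|_{Y}^{\mu}$, and the continuity in $t$, with vanishing at $t=0$, follows. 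The endpoint $\theta=2\vartheta(\mu-1)$, where $s+\theta=\mu+2$, is the genuinely delicate point, since a direct Duhamel bound there produces the non-integrable weight $(t-\tau)^{-1}$; I would handle it by reproducing Case~2 of the proof of Theorem~\ref{Theorem1}, using the interpolation inequality \eqref{Interpolation} to split $\|e^{(t-\tau)\Delta}G(u)\|_{\dot{K}_{p,q}^{\alpha}F_{\beta}^{s+\theta}}$ into a factor of smoothness $s_{\mu}$ estimated by Theorem~\ref{Key-theorem2} with no time singularity and a factor in $\dot{K}_{p,q}^{\alpha}F_{\infty}^{s_{0}+\kappa}$ with $\kappa<2$ estimated by Lemma~\ref{Heat-kernel3} and again Theorem~\ref{Key-theorem2}; here $s_{0}=\mu$, so the hypothesis $G\in Lip\,s_{0}$ is automatic, and the interpolation parameters must be chosen so that the product of the two time-exponents stays below $1$. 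Closing this endpoint interpolation is where I expect the main effort to lie; everything else is a transcription of the proof of Theorem~\ref{Theorem1} with Theorem~\ref{Key-theorem2} in place of Theorem~\ref{Key-theorem1}.
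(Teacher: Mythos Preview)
Your plan is exactly the route the paper indicates (it gives no proof beyond ``a combination of the arguments used in the proof of Theorem~\ref{Theorem1} with the help of Theorem~\ref{Key-theorem2}''), and for part~(i) and for part~(ii) with strict inequality $\theta<2\vartheta(\mu-1)$ your argument is correct: the $\epsilon$-loss coming from the embedding $\dot K_{p,q}^{\alpha}F_\infty^{\mu}\hookrightarrow \dot K_{p,q}^{\alpha}F_\beta^{\mu-\epsilon}$ is harmless because the integrability condition in the Duhamel term reads $s+\theta-\mu+\epsilon<2$, i.e.\ $\theta<2\vartheta(\mu-1)-\epsilon$, and $\epsilon$ can be taken arbitrarily small.

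There is, however, a genuine gap at the endpoint $\theta=2\vartheta(\mu-1)$. Your plan is to ``reproduce Case~2'' of Theorem~\ref{Theorem1}, but that interpolation relies essentially on the strict inequality $s_\mu<s_0$: in the second sub-case of Case~2 one needs $2+s_\mu-s_0<\kappa<2$, and here $s_\mu=s_0=\mu$ forces $2<\kappa<2$, which is vacuous. More generally, any interpolation of $\dot K_{p,q}^{\alpha}F_\beta^{\mu+2}$ between a level $a\le\mu$ (the only levels at which Theorem~\ref{Key-theorem2} and its embeddings place $G(u)$) and a level $\mu+\kappa$ produces a total time-singularity of order at least $1$: writing $\mu+2=\varrho a+(1-\varrho)(\mu+\kappa)$ gives $(1-\varrho)\kappa=2+\varrho(\mu-a)\ge 2$, so neither the pointwise product bound nor the H\"older split of the time integral can make the exponent drop strictly below~$1$. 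Thus the mechanism that rescues the endpoint in Theorem~\ref{Theorem1}---the extra room $s_0-s_\mu>0$ created by the separate hypothesis $G\in Lip\,s_0$---is simply absent when $s_0=\mu$, and Case~2 does not transcribe. You correctly flag this as the hard point, but the proposed fix does not close it; a different idea (or a weakening to $\theta<2\vartheta(\mu-1)$) is needed.
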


Let $s>\frac{n}{p}+\alpha $. Using Theorem \ref{Key-theorem1 copy(1)}, the
embedding $\dot{K}_{p,q}^{\alpha }F_{\beta }^{s}\hookrightarrow L^{\infty }$%
, we immediately arrive at the following result. We omit the proof since is
essentially similar to the proof of Theorem \ref{Theorem1}.

\begin{thm}
	\label{Theorem2}Let $1<p,q<\infty ,1<\beta <\infty ,\mu >1$ and $0 \leq \alpha<n-\frac{n}{p}$. Let $G\in Lip\mu $ and 
	\begin{equation*}
	\frac{n}{p}+\alpha<s<\mu
	.
	\end{equation*}%
	$\mathrm{(i)}$ For all initial data $u_{0}$ in $\dot{K}_{p,q}^{\alpha
	}F_{\beta }^{s}$ with $s>\bar{s}$, there exists a maximal solution $u$ to %
	\eqref{int-equa} in $C([0,T_{0}),\dot{K}_{p,q}^{\alpha }F_{\beta }^{s})$
	with $T_{0}\geq C\big\|u_{0}\big\|_{\dot{K}_{p,q}^{\alpha }F_{\beta }^{s}}^{-%
		\frac{1}{\vartheta }}$.$\newline
	\mathrm{(ii)}$ Let $\theta <2$. We have 
	\begin{equation*}
	u-e^{t\Delta }u_{0}\in C([0,T_{0}),\dot{K}_{p,q}^{\alpha }F_{\beta
	}^{s+\theta }).
	\end{equation*}
\end{thm}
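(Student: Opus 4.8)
The plan is to repeat the proof of Theorem \ref{Theorem1}, carrying out only the changes forced by the assumption $\frac{n}{p}+\alpha<s<\mu$. In this range the Sobolev embedding $\dot{K}_{p,q}^{\alpha}F_{\beta}^{s}\hookrightarrow L^{\infty}$ holds, so by Theorem \ref{Key-theorem1 copy(1)} the map $G$ sends $\dot{K}_{p,q}^{\alpha}F_{\beta}^{s}$ into itself,
\[
\big\|G(f)\big\|_{\dot{K}_{p,q}^{\alpha}F_{\beta}^{s}}\lesssim\big\|G\big\|_{Lip\mu}\,\big\|f\big\|_{\dot{K}_{p,q}^{\alpha}F_{\beta}^{s}}\,\big\|f\big\|_{\infty}^{\mu-1}\lesssim\big\|f\big\|_{\dot{K}_{p,q}^{\alpha}F_{\beta}^{s}}^{\mu}\qquad(f\in\dot{K}_{p,q}^{\alpha}F_{\beta}^{s}),
\]
whereas in Theorem \ref{Theorem1} the target was a strictly weaker space. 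Because $\frac{1}{\tilde{p}}=\frac{1}{p}+\frac{\alpha-s}{n}<0$ here, the auxiliary space $C([0,T),\dot{K}_{\tilde{p},q}^{0})$ of Theorem \ref{Theorem1} is replaced by $X=C([0,T),\dot{K}_{p,q}^{\alpha})$, which is admissible because $\dot{K}_{p,q}^{\alpha}F_{\beta}^{s}\hookrightarrow\dot{K}_{p,q}^{\alpha}$ for $s>0$; put $Y=C([0,T),\dot{K}_{p,q}^{\alpha}F_{\beta}^{s})$ and run the Picard scheme $u^{0}=e^{t\Delta}u_{0}$, $u^{j+1}=u^{0}+F(u^{j})$. (The hypothesis $s>\bar{s}$ is automatic, since $s>\frac{n}{p}+\alpha>\bar{s}$.)

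Part (i) then rests on three estimates. The linear estimate: $e^{t\Delta}$ is bounded, uniformly for $0<t<T$, on $\dot{K}_{p,q}^{\alpha}F_{\beta}^{s}$ (Lemma \ref{Heat-kernel3} with $\theta=0$) and on $\dot{K}_{p,q}^{\alpha}$ (since $0\le\alpha<n-\frac{n}{p}$, via $|e^{t\Delta}f|\lesssim\mathcal{M}(f)$ and boundedness of $\mathcal{M}$ on $\dot{K}_{p,q}^{\alpha}$; cf. Lemma \ref{Heat-kernel2}). The nonlinear estimate, obtained by inserting the displayed composition bound:
\[
\big\|F(u)(t,\cdot)\big\|_{\dot{K}_{p,q}^{\alpha}F_{\beta}^{s}}\le\int_{0}^{t}\big\|e^{(t-\tau)\Delta}G(u)(\tau,\cdot)\big\|_{\dot{K}_{p,q}^{\alpha}F_{\beta}^{s}}\,d\tau\lesssim\int_{0}^{t}\big\|u(\tau,\cdot)\big\|_{\dot{K}_{p,q}^{\alpha}F_{\beta}^{s}}^{\mu}\,d\tau.
\]
And the Lipschitz estimate: from $G\in Lip\mu$ one has the elementary pointwise bound $|G(a)-G(b)|\lesssim\big\|G\big\|_{Lip\mu}\,|a-b|\,(|a|^{\mu-1}+|b|^{\mu-1})$, so Hölder's inequality in $\dot{K}_{p,q}^{\alpha}$ and the $L^{\infty}$ control of $u,v\in Y$ give $\big\|F(u)-F(v)\big\|_{X}\lesssim T\,\big\|u-v\big\|_{X}\big(\big\|u\big\|_{Y}^{\mu-1}+\big\|v\big\|_{Y}^{\mu-1}\big)$. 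A small-ball fixed point argument in $X$, a uniform bound of the iterates in $Y$, and the weak-compactness passage to the limit then yield a maximal solution $u\in Y$ of \eqref{int-equa} on $[0,T_{0})$ with $T_{0}\ge C\big\|u_{0}\big\|_{\dot{K}_{p,q}^{\alpha}F_{\beta}^{s}}^{-1/\vartheta}$ read off from the time weights exactly as in Theorem \ref{Theorem1}; uniqueness follows by absorbing $\big\|F(u)-F(v)\big\|_{X}$ for small $T$ and iterating.

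For part (ii) one writes $u-e^{t\Delta}u_{0}=F(u)$ and applies Lemma \ref{Heat-kernel3} in its smoothing form: for $0\le\theta<2$,
\[
\big\|F(u)(t,\cdot)\big\|_{\dot{K}_{p,q}^{\alpha}F_{\beta}^{s+\theta}}\le\int_{0}^{t}\big\|e^{(t-\tau)\Delta}G(u)(\tau,\cdot)\big\|_{\dot{K}_{p,q}^{\alpha}F_{\beta}^{s+\theta}}\,d\tau\lesssim\int_{0}^{t}(t-\tau)^{-\theta/2}\big\|u(\tau,\cdot)\big\|_{\dot{K}_{p,q}^{\alpha}F_{\beta}^{s}}^{\mu}\,d\tau\lesssim T_{0}^{1-\theta/2}\big\|u\big\|_{Y}^{\mu},
\]
the integral converging precisely because $\theta/2<1$; continuity in $t$ into $\dot{K}_{p,q}^{\alpha}F_{\beta}^{s+\theta}$ follows from dominated convergence and strong continuity of $\{e^{t\Delta}\}$, and $\theta=0$ is contained in part (i). Note that here, in contrast with Theorem \ref{Theorem1}, no upper bound on $\theta$ other than $\theta<2$ is needed, because $G$ preserves rather than lowers the regularity index.

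The one genuinely new point compared with Theorem \ref{Theorem1} is the choice of contraction space: since $\tilde{p}$ is meaningless in the present range, one must check that $\dot{K}_{p,q}^{\alpha}$ plays its role, which comes down to the embedding $\dot{K}_{p,q}^{\alpha}F_{\beta}^{s}\hookrightarrow\dot{K}_{p,q}^{\alpha}$ ($s>0$, e.g. Theorem \ref{embeddings3}) and the uniform boundedness of $e^{t\Delta}$ on $\dot{K}_{p,q}^{\alpha}$ ($0\le\alpha<n-\frac{n}{p}$). With these in hand, the entire argument of Theorem \ref{Theorem1} transfers, with Theorem \ref{Key-theorem1} replaced by Theorem \ref{Key-theorem1 copy(1)} and the embedding $\dot{K}_{p,q}^{\alpha}F_{\beta}^{s}\hookrightarrow L^{\infty}$ used to absorb the homogeneity factors.
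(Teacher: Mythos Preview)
Your approach is correct and matches exactly what the paper indicates: replace Theorem~\ref{Key-theorem1} by Theorem~\ref{Key-theorem1 copy(1)}, use the embedding $\dot{K}_{p,q}^{\alpha}F_{\beta}^{s}\hookrightarrow L^{\infty}$ (valid since $s>\tfrac{n}{p}+\alpha$), and rerun the fixed-point scheme of Theorem~\ref{Theorem1}. Your observation that $\tilde p$ becomes meaningless and must be replaced by a space such as $\dot{K}_{p,q}^{\alpha}$ is a sensible concretisation of what the paper leaves implicit; the contraction and smoothing estimates you write are the right ones.

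One small point to be careful about: your nonlinear estimate yields $\big\|F(u)\big\|_{Y}\lesssim T\,\big\|u\big\|_{Y}^{\mu}$ (time weight $T^{1}$, since no smoothing is needed), whereas in Theorem~\ref{Theorem1} the weight was $T^{\frac{(\mu-1)(s-\bar s)}{2}}$. These coincide only when $s=\tfrac{n}{p}+\alpha$, so the fixed-point balance gives $T_{0}\gtrsim\big\|u_{0}\big\|^{-(\mu-1)}$ rather than $\big\|u_{0}\big\|^{-1/\vartheta}$; the phrase ``read off from the time weights exactly as in Theorem~\ref{Theorem1}'' is therefore not quite accurate. This is a discrepancy with the exponent written in the statement (which appears to be copied verbatim from Theorem~\ref{Theorem1}); since the paper omits the proof there is nothing to compare against, but you should flag that your argument yields a different explicit exponent for the lower bound on $T_{0}$.
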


\begin{rem}
	Corresponding statements to Theorem \ref{Theorem1} were proved by Ribaud 
	\cite{Ri98}, with $\theta <2\vartheta (\mu -1),\alpha =0,p=q$ and $\beta =2$%
	, under the assumption 
	\begin{equation}
	\frac{n}{p}-\frac{n}{\mu p}<s<\min \Big(\frac{(1+\frac{n}{p})(\mu -1)}{\mu },%
	\frac{n}{p}\Big).  \label{Ribaud1}
	\end{equation}%
	Here we are requiring%
	\begin{equation*}
	\max \Big(0,\frac{n}{p}-\frac{n}{\mu }\Big)<s<\min \Big(1+\frac{\mu -1}{\mu }%
	\frac{n}{p},\frac{n}{p}\Big),
	\end{equation*}%
	which improve \eqref{Ribaud1}.
\end{rem}

\textbf{Acknowledgements. }   This work is founded by the General Direction of Higher Education and Training under\ Grant No. C00L03UN280120220004 and by
The General Directorate of Scientific Research and Technological Development.

\bigskip

\bigskip

Douadi \ Drihem

M'sila University

Department of Mathematics

Laboratory of Functional Analysis and Geometry of Spaces

P.O. Box 166, M'sila 28000, Algeria

E-mail: douadidr@yahoo.fr, douadi.drihem@univ-msila.dz

\end{document}